\numberwithin{equation}{section}
\newtheorem{theorem}{\sc Theorem}[section]
\newtheorem{lemma}{\sc Lemma}[section]
\newtheorem{proposition}{\sc Proposition}[section]
\newtheorem{remark}{\sc Remark}
\newtheorem{definition}{\sc Definition}[section]
\newcommand{\bset}[1]{ [#1] }
\newcommand{\pset}[1]{ (#1) }
\newcommand{\Pset}[1]{ \left(#1\right) }
\newcommand{\norm}[1]{ \|#1\| }
\newcommand{\abs}[1]{ |#1| }
\newcommand{\Abs}[1]{\left |#1\right| }
\newcommand{\cp}[1]{,_{#1}}
\def\hd{\bar{\partial}}
\def\local{l}
\def\UL{U_\local}
\def\thetal{\theta_\local}
\def\p{\partial}
\def\uu{u_0^\epsilon}
\numberwithin{figure}{section}
\title[Splash singularity for the magnetohydrodynamic equations]
{On the Splash Singularity for the free-boundary problem of the viscous and non-resistive incompressible magnetohydrodynamic equations in 3D}
\begin{document}
\author[G.-Y. Hong]{Guangyi Hong}
\address{Guangyi Hong\newline\indent
School of Mathematics\newline\indent
South China University of Technology\newline\indent
Guangzhou 510641, China}
\email{magyhong@scut.edu.cn}

\author[T. Luo]{Tao Luo}\address{Tao Luo\newline\indent Department of Mathematics\newline\indent  City University of Hong Kong\newline\indent 
Kowloon, Hong Kong,  China}\email{taoluo@cityu.edu.hk}

\author[Z.-H. Zhao]{Zhonghao Zhao}
\address{Zhonghao Zhao\newline\indent
Department of Mathematics\newline\indent
City University of Hong Kong \newline\indent 
Kowloon, Hong Kong,   China}
\email{zhonghao2-c@my.cityu.edu.cn}
\begin{abstract}  
In this paper, the existence of finite-time splash singularity is proved for the free-boundary problem of the viscous and non-resistive incompressible magnetohydrodynamic (MHD) equations in $ \mathbb{R}^{3}$, based on a construction of a sequence of initial data alongside delicate estimates of the solutions. The result and analysis in this paper generalize those by Coutand and Shkoller in \cite[Ann. Inst. H. Poincar\'{e} C Anal. Non Lin\'{e}aire, 2019]{Shkoller-2019-APAN} from the viscous surface waves to the viscous conducting fluids with magnetic effects for which non-trivial magnetic fields may present on the free boundary. The arguments in this paper also hold for any space dimension $d\ge 2$.

\vspace*{6mm}

\noindent{\sc 2020 Mathematics Subject Classification.} 35R35; 35A21; 76W05.

\vspace*{1mm}
 \noindent{\sc Keywords.} Free-boundary problem; finite-time singularity; magnetohydrodynamic equations; interface singularity.
\end{abstract}

\maketitle

\vspace{-2mm}


\section{Introduction}

In this paper, we consider the free boundary problem for the incompressible, viscous and non-resistive magnetohydrodynamic equations in $3$-dimensions:

\begin{subequations}
  \label{MHDe}
\begin{alignat}{2}
\tilde{\mathbf{u}}_t+ \tilde{\mathbf{u}}\cdot \nabla \tilde{\mathbf{u}} +  \nabla  P&= \frac{1}{4 \pi} \tilde{\mathbf{H}} \cdot \nabla \tilde{\mathbf{H}}+ \nu \Delta \tilde{\mathbf{u}}  \ \  \ &&\text{in} \ \ \Omega(t) \times [0,T],\label{Mhd-moment}\\
 \partial _{t}\tilde{\mathbf{H}}+ \tilde{\mathbf{u}} \cdot \nabla \tilde{\mathbf{H}}&=\tilde{\mathbf{H}} \cdot \nabla \tilde{\mathbf{u}} \ \  \ &&\text{in} \ \ \Omega(t) \times [0,T],\\
  {\operatorname{div}} \tilde{\mathbf{u}} &=0
&&\text{in} \ \ \Omega(t) \times [0,T], \\
{\operatorname{div}} \tilde{\mathbf{H}} &=0
&&\text{in} \ \ \  \Omega(t) \times [0,T],
\end{alignat}
\end{subequations}
describing the motion of the incompressible, viscous, conducting fluids with magnetic effects and without resistivity. Here $ \tilde{\mathbf{u}}=(\tilde{u} ^{1}, \tilde{u} ^{2},\tilde{u} ^{3}) $, $ \tilde{\mathbf{H}}=(\tilde{H}^{1},\tilde{H}^{2},\tilde{H}^{3}) $,  
and 
$\Omega (t) \subset \mathbb{R}^{3}$ represent the fluid velocity, the magnetic field and the moving domain, respectively. $P= p+ \frac{1}{8 \pi}\vert \tilde{\mathbf{H}}\vert ^{2}$ is the total pressure, where $ p $ represents the fluid pressure. We denote by $\mathbf{n}=\mathbf{n}(\mathbf{x}, t)$ the exterior unit normal vector to the free surface $\Gamma(t):=\partial \Omega(t)$, and by $ \mathcal{V} (\Gamma(t))$ the normal velocity of $\Gamma(t)$. Given a bounded smooth domain $ \Omega _{0} \subset \mathbb{R}^{3} $, we shall consider the equations \eqref{MHDe} supplemented with the following initial and boundary conditions: 
\begin{subequations}\label{free-bd}
\begin{numcases}
\displaystyle  \nu \operatorname{Def} \tilde{\mathbf{u}} \cdot \mathbf{n}+ \frac{1}{4 \pi}(\tilde{\mathbf{H}} \otimes \tilde{\mathbf{H}})\cdot \mathbf{n} - P\, \mathbf{n} = \mathbf{0},\ \ \tilde{\mathbf{H}} \cdot \mathbf{n}=0 \ \ &\text{on} \ \ $ \Gamma(t) $, \label{bd-mom}\\
\mathcal{V} (\Gamma(t)) = \tilde{\mathbf{u}} \cdot \mathbf{n} &\text{on} \ \ $ \Gamma(t) $, \\
\tilde{\mathbf{u}}   = \mathbf{u}_0,\ \tilde{\mathbf{H}}=\mathbf{H}_{0}  \ \  &\text{on} \ \ $ \Omega _{0} $.
  \end{numcases}
 \end{subequations}
 Since $\tilde{\mathbf{H}} \cdot \mathbf{n}=0$ implies that $(\tilde{\mathbf{H}} \otimes \tilde{\mathbf{H}})\cdot \mathbf{n}=0$ on $ \Gamma(t) $,  \eqref{bd-mom} reduces to 
 \[  \nu \operatorname{Def} \tilde{\mathbf{u}} \cdot \mathbf{n} - P\, \mathbf{n} = \mathbf{0},\ \ \tilde{\mathbf{H}} \cdot \mathbf{n}=0 \ \ \text{on} \ \  \Gamma(t) . \]
In view of \eqref{Mhd-moment} and \eqref{bd-mom}, we know that the pressure function $ P $ is determined by the following Dirichlet problem:
\begin{subequations}
  \label{p}
\begin{alignat}{2}
- \Delta P  &=  \tilde{u}^i,_j \tilde{u}^j,_i  - \frac{1}{4 \pi}\tilde{H} ^{i}, _{j}\tilde{H} ^{j}, _{i} \ \  \ &&\text{in} \ \ \Omega(t),\\
 P &=  \mathbf{n} \cdot \left[ \nu \operatorname{Def} \tilde{\mathbf{u}} \cdot \mathbf{n} \right] \ \ &&\text{on} \ \ \Gamma(t),
\end{alignat}
\end{subequations}
so that given an initial domain $\Omega _{0} $, an initial velocity field $\mathbf{u}_0$, and an initial magnetic field $ \mathbf{H}_{0} $, the initial pressure is obtained as the solution of (\ref{p}) at $t=0$.

The goal of the present paper is to study the splash singularity for the aforementioned free-boundary problem \eqref{MHDe}--\eqref{free-bd}. The study on the splash singularity of fluid interfaces can date back to the work \cite{Fefferman-2013-Ann-math} by Castro, C\'{o}rdoba, Fefferman, Gancedo and G\'{o}mez-Serrano, where the splash singularity for the two-dimensional inviscid water wave problem was shown to occur when a fluid interface remains locally smooth but intersects in finite time. In their proofs, a conformal map was used to transform the equations and the fluid domain. In \cite{Shkoller-2014-splash}, Coutand and Shkoller showed the existence of a finite-time splash for the three-dimensional one-phase incompressible Euler equations with free boundary based on an approximation of the self-intersecting fluid domain by a sequence of smooth fluid domains. While for the vortex-sheet problem described by the two-phase incompressible Euler equations, Fefferman et al. \cite{Fefferman-2016-Duke-twophase} and Coutand and Shkoller \cite{Coutand-Shkoller-2016-ARMA} showed that no splash singularity can occur in finite time while the interface remains locally smooth. We also remark that except for the splash singularity, there is another singularity, so-called splat singularity, which can occur in inviscid flows, for some recent works in this direction, one may see, for instance, \cite{Fefferman-2013-Ann-math} and the references therein. 

When the viscosity is taken into account, the strategy for the study on splash singularity of the inviscid flows mentioned above could not work since it relies on the ability to flow backward-in-time. By using the transformation of the fluid domain employed in \cite{Fefferman-2013-Ann-math} alongside stability estimates, Castro et al. \cite{Castro-2019-AnnPDE} proved the existence of finite-time splash singularities for the free boundary-problem of 
incompressible Navier-Stokes equations in 2D. Later on, Coutand and Shkoller \cite{Shkoller-2019-APAN} developed a new method to show the existence of finite-time singularities for the free-boundary problem of the incompressible Navier-Stokes equations for any dimension $ d \geq 2 $. Motivated by the results in \cite{Castro-2019-AnnPDE,Shkoller-2019-APAN}, there have been some works on the splash singularity for the free-boundary viscoelastic fluids of the Oldroyd-B type (cf. \cite{marcati-oldroyd-B-ARMA,marcati-oldroyd-B-NoDEA-2017,marcati-Advance-2020}). However, as far as we are concerned, there are only few mathematical research on the singularity formation for the free-boundary problems of MHD equations. Indeed, it is quite important and interesting to investigate the singularity formation in the evolution of free surfaces for MHD problems. In MHD, the magnetic tension force,  $ \frac{1}{4 \pi} \tilde{\mathbf{H}} \cdot \nabla \tilde{\mathbf{H}}$ appearing on the right hand side of the first equation of \eqref{MHDe}, as a part of Lorentz force, plays a very different role other than the pressure. The force played by the magnetic tension is to straighten bent magnetic field lines due to which completely new wave phenomena without analogue in the ordinary fluid theory may present. It should be noted, the free surface $\Gamma(t)$ for the free-boundary problem \eqref{MHDe}--\eqref{free-bd} is foliated by magnetic lines since $\tilde{\mathbf{H}} \cdot \mathbf{n}=0$ on the free surface $\Gamma(t)$. Therefore, the force of magnetic tension is an essential part to drive the motion of the free surface
for the MHD problem. Recently, Hao-Yang \cite{hao2023splash} proved the finite-time splash singularity for the free-boundary problem of the two-dimensional incompressible MHD equations by using the conformal map employed in \cite{Fefferman-2013-Ann-math}. The approach of adopting the conformal mapping does not work for the three dimensional case. In the present paper, we shall focus on the existence of finite-time splash singularity for the incompressible MHD equations in three dimensions.  Moreover, we work in this paper for solutions for which the estimates of higher order energy are given. This is different from those used in \cite{hao2023splash} where the norms in Beale spaces based on interpolation introduced in \cite{Beale-1981} are used. The approach and estimates used the present paper hold for any space dimension $d\ge 2$. Dealing with the strong coupling of velocity, magnetic field, evolution of free surface and effects of viscosity is one of main themes of the paper. 

It is worth noting that, despite the lack of study on the splash singularity for MHD equations, the free-boundary problem of the MHD equations has been subjects of many mathematical studies because of its physical importance, complexity, rich phenomena, and mathematical challenges. Hao and Luo \cite{Hao-Luo-2014-ARMA} established the \emph{a priori} estimates for the free-boundary problem of the incompressible inviscid MHD equations in a general smooth initial domain and without surface tension under the Taylor sign condition. They \cite{Hao-Luo-2020-CMP} also proved the ill-posedness of the free-boundary problem in a 2D domain when the Taylor sign condition is violated. The local existence results on solutions to the free-boundary problem of the incompressible inviscid MHD equations were established in \cite{Gu-Wangyanjin-JMPA} and \cite{gu2021local} for the case with surface tension and the case without surface tension, respectively. A local existence result on a linearized free-boundary problem of incompressible MHD equations in a general initial domain was established in \cite{Hao-TAO-2021-JDE}. The local-in-time well-posedness of the free-boundary problems of compressible inviscid MHD with or without surface tensions was established by Trakhinin and Wang in \cite{TW1} and \cite{TW2}, respectively. The study of evolutionary free interfaces such as current-vortex sheets or plasma-vacuum interfaces for MHD equations for which magnetic fields play essential role to the well-posedness and stability is very interesting and active. Important progresses have been made on related  research topics. One may refer to \cite{CWY,CMST, ST, SWZ,SWZ1, T1,T2,T3, WX1,WX2} for the inviscid theory and \cite{DJJ, JJW} for viscous theory. For results on the free-boundary problems of fluids, one may refer to \cite{wusijue-1997-Invention,wusijue-1999-JAMS,Shkoller-2007-Jams,zhangping-zhangzhifei-2008-CPAM,Chri-Lindblad-2000,Lindblad-2005-Ann-of-Math,David-Lannes-2005,Zeng-chognchun-Shatah-2008-CPAM,Alazard-2014} for the inviscid theory of the incompressible Euler equations, and \cite{Solonnikov-1977,Solonnikov-1992,Beale-1981,Guo-Tice-2013-local,Guiguilong-Peking-2021} for the viscous theory of incompressible Navier-Stokes equations. 

To proceed, we quote the definition of splash singularity from \cite{Shkoller-2019-APAN}.
\begin{definition}A point of self-intersection is defined to be \emph{splash} singularity at time $T$ if a locally smooth, time-dependent fluid interface or free-boundary self-intersects at a point at time $T$ and its local smoothness is conservative.
\end{definition}
We shall prove that there exist smooth initial data for the free boundary problem of the viscous and non-resistive incompressible MHD equations \eqref{MHDe} for which such a splash singularity occurs in finite time. Precisely, our main results can be stated as follows.


\begin{theorem}[Finite-time splash singularity]\label{mainThm} There exists an initial data set $ (\Omega _{0}, \mathbf{u} _{0},\mathbf{H} _{0}) $ such that a finite-time splash singularity forms for any smooth solution $ (\Omega(t), \mathbf{u},\mathbf{H}) $ to the free-boundary problem \eqref{MHDe}--\eqref{free-bd} ; that is, after a finite time $T^*>0$, the interface $\Gamma(T^*)=\p \Omega(T^*) $ self-intersects. The constructed initial data set $ (\Omega _{0}, \mathbf{u} _{0},\mathbf{H} _{0}) $ satisfy 
\begin{enumerate}
\item  the initial domain $\Omega _{0}\subset \mathbb{R}  ^3$ is open bounded and in $C^ \infty $-class, 
\item the initial initial velocity  $\mathbf{u}_0$ is a smooth divergence-free vector field satisfying the compatibility condition
$$\left[ \operatorname{Def} \mathbf{u}_0 \cdot \mathbf{N}\right] \times \mathbf{N} =0 \text{ on } \p \Omega_0 ,$$
 where $\mathbf{N}$ is the unit outer normal to  $\p \Omega _{0}$, 
\item the initial magnetic field $ \mathbf{H}_{0}$ is a smooth divergence-free vector field satisfying $\mathbf{H}_{0}\cdot \mathbf{N} =0 \text{ on } \p \Omega_0$.
\end{enumerate}
\end{theorem}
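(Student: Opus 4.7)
The plan is to adapt the approach of Coutand and Shkoller \cite{Shkoller-2019-APAN} for the viscous Navier-Stokes splash to the MHD system, with the principal new ingredient being the treatment of the magnetic field and its transport. First I would design the initial domain as a smooth, bounded ``pre-splash'' domain $\Omega_0 = \Omega_0^\epsilon$: in a small ball $B$ around a prescribed candidate collision point, $\partial\Omega_0$ consists of two nearly-tangent graphs of smooth functions differing by a gap of size $\epsilon>0$, while outside $B$ the geometry is fixed and smooth. This yields a one-parameter family of smooth domains that becomes a splash configuration in the limit $\epsilon\to 0^+$, and the splash time will be identified as the first instant at which the two sheets collide under the prescribed flow.

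Next, I would build the initial velocity $\mathbf{u}_0$ as a smooth divergence-free vector field whose normal trace on the upper and lower sheets of the pre-splash region is directed toward one another with magnitude bounded below, while satisfying the stress compatibility $[\operatorname{Def}\mathbf{u}_0\cdot\mathbf{N}]\times\mathbf{N}=0$ on $\partial\Omega_0$. Such a field can be produced by prescribing it in a flat-boundary chart and extending divergence-freely through a Neumann-type correction. For the initial magnetic field $\mathbf{H}_0$, I would take a smooth divergence-free vector field tangent to $\partial\Omega_0$, most conveniently constructed as $\mathbf{H}_0=\nabla\times\mathbf{A}_0$ for a suitable vector potential $\mathbf{A}_0$, \emph{with support disjoint from $B$}. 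This localization preserves full nontriviality of $\mathbf{H}_0$ on $\partial\Omega_0$ outside the neck, but ensures that the inertia-driven closure of the gap is not obstructed or diverted by magnetic tension, and avoids creating a magnetic boundary layer in the delicate region.

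I would then reformulate \eqref{MHDe}--\eqref{free-bd} in Lagrangian coordinates on $\Omega_0^\epsilon$, obtaining a coupled parabolic-transport system for the flow map $\eta$, the Lagrangian velocity $v$ and the Lagrangian magnetic field $B$; the tangency condition $\tilde{\mathbf{H}}\cdot\mathbf{n}=0$ is preserved automatically by the flow since magnetic lines are frozen into the fluid. Local-in-time existence in high-order Sobolev spaces follows from parabolic regularization of the velocity equation together with transport estimates for $B$, along the lines of \cite{JJW,DJJ}. The heart of the argument is to close high-order energy estimates on a time interval $[0,T^*]$ independent of $\epsilon$, combining (i) viscous dissipation from $\nu\Delta\tilde{\mathbf{u}}$, (ii) $L^2$-conservation and higher-order transport estimates for $B$ coupled to $v$ through $\tilde{\mathbf{H}}\cdot\nabla\tilde{\mathbf{u}}$, (iii) elliptic regularity for the Dirichlet problem \eqref{p} determining $P$ with the magnetic-tension source on its right-hand side, and (iv) tangential-derivative estimates respecting the moving geometry of $\Gamma(t)$.

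The main obstacle will be closing these estimates uniformly in $\epsilon$ as the pre-splash neck narrows, since the elliptic problem for $P$ and the geometric control on $\eta$ are both posed on domains whose constants could in principle blow up with $\epsilon$. The key observation that makes the argument work is that, because $\mathbf{H}_0$ is supported away from $B$, the transport equation propagates $\mathbf{H}$ along Lagrangian trajectories that remain outside the neck on a short time interval, so both the interior source and the Dirichlet data of \eqref{p} retain $\epsilon$-uniform regularity; the residual geometric degeneracy is controlled as in \cite{Shkoller-2019-APAN} by estimates on the Lagrangian flow in $C^1$, which are uniform in $\epsilon$ because the two boundary sheets remain graphs over a common base on the relevant time scale. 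With uniform estimates in hand, a continuity argument provides a common lower bound on the existence time across the family, and choosing $\epsilon$ small enough forces the two sheets to collide at some finite $T^*>0$, at which point $\Gamma(T^*)$ self-intersects, proving Theorem~\ref{mainThm}. The entire argument is dimension-insensitive and works for any $d\ge 2$.
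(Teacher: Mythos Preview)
Your overall strategy is correct and closely parallels the paper's: a family of dinosaur-type domains $\Omega^\epsilon$, an initial velocity with prescribed normal trace driving the two sheets together, Lagrangian reformulation, uniform-in-$\epsilon$ energy estimates via the chart machinery of \cite{Shkoller-2019-APAN}, and a final continuity argument forcing collision on a common time interval.

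The substantive difference is your treatment of $\mathbf{H}_0$. You localize the initial magnetic field to have support disjoint from the collision neighborhood $B$, and you identify this localization as ``the key observation that makes the argument work.'' The paper takes the opposite view: it constructs $\mathbf{H}_0^\epsilon$ via a div-curl system (Lemma~\ref{lem-div-curl}) that is allowed to be nontrivial throughout $\Omega^\epsilon$, including on the boundary near the splash point, and then closes uniform $H^2$ estimates on $\mathbf{H}$ directly from the transport equation (Lemma~\ref{lem-H}), using only the uniform chart bounds of Lemma~\ref{lem-te-l-bd}. In particular, the uniform elliptic and Sobolev constants on $\Omega^\epsilon$ come entirely from the geometry of the chart maps $\theta_l^\epsilon$, not from any localization of the magnetic field; the magnetic contributions to the pressure problem and to the tangential energy estimates are controlled by the same mechanism as the velocity terms. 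Your localization therefore buys a shortcut in the magnetic bookkeeping but is unnecessary, and it weakens the physical content of the result: the paper's construction shows that magnetic tension acting \emph{at} the splash point does not prevent self-intersection, whereas your version sidesteps that question. Both approaches prove the theorem as stated, since it only asks for existence of some admissible data.
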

The proof of Theorem \ref{mainThm} is carried out in spirit of \cite{Shkoller-2019-APAN}. The key of matter is to construct a sequence of initial data which is characterized by the parameter $ \epsilon >0$, as well as to establish some uniform-in-$ \epsilon $ estimates for the solution which allow us to extract a common lifespan of the solutions as $ \epsilon \rightarrow 0 $. As in \cite{Shkoller-2019-APAN}, we choose a dinosaur domain as the ``reference domain'', and construct a sequence of initial data based on it. A crucial ingredient of our analysis lies in the construction of the initial magnetic field, via the div-curl system alongside the carefully chosen \emph{curl} function, which, in particular, allows for non-zero boundary value. For details, please see Section \ref{sec:the_initial_magnetic_field}. To derive the uniform-in-$ \epsilon $ estimates for the solutions, as in \cite{Shkoller-2019-APAN}, we localize the equations near the boundary with the boundary local charts and establish the boundary regularity of the velocity, and then recover the regularity of the velocity in the fluid domain via the Stokes system. However, the estimates for the velocity must involve the ones for the magnetic field. For this, we derive some delicate estimates for the magnetic field carefully in the initial domains $ \Omega ^{\epsilon} $ by utilizing the transport character of the magnetic field and the coupling between the magnetic field equation and the momentum equation.


Finally, the rest of the paper is organized as follows: 
In Section \ref{sec:notation}, we define some notations and recall some known facts for later use. In Section \ref{sec-initial_data}, we construct the sequence of initial data, and introduce some regularity results and Sobolev inequalities relevant to the constructed initial domain. In Section \ref{sec-a_priori-esti}, we reformulate the problem with the Lagrangian coordinates as well as the local boundary charts, and establish the desired estimates for the solution. In Section \ref{section8}, we give the proof of the main theorem.


\vspace{6mm}

\section{Preliminaries} \label{sec:notation}
In this section, we introduce some notations and recall some known facts that will be frequently used later.

\subsection{Some notations} \label{sec:grad-horiz-deriv}
Throughout this paper, we use
$
\nabla =\left( \frac{\p}{\p x_1}\,,  \frac{\p}{\p x_2}\,,  \frac{\p}{\p x_3}  \right)
$ to denote the $3$-dimensional gradient vector, and $\operatorname{Def} \mathbf{u} = \nabla \mathbf{u} +  \nabla \mathbf{u}^T$ to denote the twice of the symmetric part of the gradient of velocity.
We use $f\cp{k}=\frac{\partial f}{\partial x_k}\ \mbox{or} \ \frac{\partial f}{\partial y_k}$ as the symbol of the $k$th partial derivative of $f$. Repeated indices $i,j,k$, etc., are summed from $1$ to $3$. For example, $F\cp{ii}=\sum_{i=1}^3\frac{\p^2F}{\p x_i\p x_i}$, and $F^i\cp{\alpha} I^{\alpha\beta} G^i\cp{\beta}=\sum_{i=1}^3\sum_{\alpha=1}^{2}\sum_{\beta=1}^2\frac{\p F^i}{\p x_\alpha} I^{\alpha\beta} \frac{\p G^i}{\p x_\beta}$. Furthermore, we denote by $ \mathbb{I}_{3} $ the $ 3 \times 3 $ identity matrix, and denote by $ \partial ^{k}f $ any $ k $th-order derivative of $ f $.

\subsection{Local charts for smooth domains}\label{localchart}
let $ B=B(0,1) $ denote the open unit ball in $\mathbb{R}^3$ centered at the origin, and $ B ^{+}=B \cap \left\{ y _{3}> 0\right\} $, $ B ^{0}=\bar{B}\cap \left\{ y _{3}=0 \right\} $. Given a smooth open set $ \Omega \subset \mathbb{R}^{3} $, if there exists a collection of open sets $ \mathcal{U}:=\left\{ U _{l} \right\} _{l=1}^{L} $ such that $ U _{l}\subset \Omega (l=K+1,\cdots,L) $ for some $ K<L $, and for each $ l \in \left\{ 1,2,\cdots,L \right\}  $, there exists a $ C ^{\infty} $ maps $ \theta _{l} $ which is a $ C ^{\infty} $ diffeomorphism from $B$ to $U_{L}$, and 
\begin{gather*}
\displaystyle \theta _{l}(B ^{+})=U _{l}\cap \Omega,\ \ \theta _{l}(B ^{0})=U _{l}\cap \Gamma\ \ \mbox{} l=1,2,\cdots,K,\\ 
\displaystyle  \Gamma=\p \Omega \subset\cup _{l=1}^{K}U _{l},\ \  \ \Omega \subset \cup _{l=1}^{L}U _{l}.
\end{gather*}
Then we call $ \left( \mathcal{U}, \cup _{l=1}^{L}\theta _{l} \right)  $ the local charts of $ \Omega $. Moreover, one may assume that $ \mathop{\mathrm{det}} \nabla \theta _{l}=C _{l} $ for some constant $ C _{l} >0$.

\def\R{ \mathbb{R}  }

\subsection{Tangential (or horizontal) derivatives}\label{sec: tangential derivative}We define the tangential derivatives for boundary charts $\UL\cap\Omega$, for $1\le\local\le K$. The $\alpha$th-component of the tangential derivatives on the $l$th boundary chart is defined to be:
\begin{align*}
	\bar \p_ \alpha  f=\Pset{\frac{\p}{\p y_\alpha}\bset{f\circ\thetal}}\circ\thetal^{-1}=\Pset{\pset{ \nabla  f\circ\thetal}\frac{\partial\thetal}{\partial y_\alpha}}\circ\thetal^{-1} \,,
\end{align*}and the operator $\bar{\partial} = (\bar \partial_{1},  \bar \partial_{2})$. For functions defined directly on  $B^+$, $\hd$ is simply the horizontal derivative $\hd = (\partial_{y_1},  \partial_{y_{2}})$.

\subsection{Sobolev spaces} \label{sec:diff-norms-open}
The Sobolev space $H^k(U)$ is defined to be the completion of $C^\infty(\bar{U})$ $\pset{C^\infty(\bar{U}; \mathbb{R}  ^3)}$ in the norm
\begin{align*}
	\norm{u}_{k,U}^2=\sum_{\abs{a}\le k}\int_U \Abs{ \nabla ^a u(\mathbf{x}) }^2 \mathrm{d}\mathbf{x},
\end{align*}
for integers $k\ge0$ and a bounded domain $U$ of $\R^3$, with the multi-index $a\in \mathbb{Z}  ^3_+$, such that $\abs{a}=a_1+ a_2+a_3$ ($a_j$ is the order of $j$th derivative).
For real numbers $s\ge0$, we define the Sobolev spaces $H^s(U)$ and the norms $\label{n:interior norm}\norm{\cdot}_{s,U}$ by interpolation.
For simplicity, $H^s(U)$ is used to represent $H^s(U; \mathbb{R}^3)$ if there is no possibility for confusion.

\subsection{Sobolev spaces on a surface} \label{sec:sobolev-spaces-gamma}The Sobolev norm on a surface $\Gamma$ is defined as
\begin{align*}
	\norm{u}_{k,\Gamma}^2=\sum_{\abs{a}\le k } \int_\Gamma \Abs{ \hd^a u(\mathbf{x})}^2 \mathrm{d}S,
\end{align*}
for functions $u\in H^k(\Gamma)$, $k\ge0$ and a multi-index $a\in \mathbb{Z}  ^{2}_+$.
For real $s\ge0$, we define the Hilbert space $H^s(\Gamma)$ and the boundary norm $\| \cdot \|_s$ by interpolation. And we define $H^{-s}(\Gamma)=H^s(\Gamma)' $ for real $s\ge0$.

\vspace{6mm}

\section{Construction of the initial data} 
\label{sec-initial_data}


\subsection{The sequence of  initial  domains}\label{sec::dino_wave}

We shall adapt the construction of initial domains from \cite{Shkoller-2019-APAN}.  As in \cite{Shkoller-2019-APAN}, we first define a dinosaur domain $ \Omega $. 

\begin{definition}[The domain $ \Omega $]\label{Defi-OMDA} The domain $ \Omega \subset \mathbb{R}^{3} $, is a smooth bounded domain (see Figure \ref{Fig-dinau}(a)) with boundary $ \Gamma $  containing three particular open subsets of $ \Omega $ as follows: 
\begin{enumerate}
  \item[1.] an open subset $ \omega \subset \Omega $ whose boundary $ \partial \omega $ is a vertical circular cylinder of radius $ 1 $ and length $ h $, 

  \item[2.] an open subset $ \omega _{+}\subset \Omega $ which is the lower-half of an open ball of radius $ 1 $, located directly below the cylindrical region $ \omega $, and in contact with the cylindrical region $ \bar{\omega} $. The ``south pole'' of $ \omega _{+} $ is the point $ \mathbf{X} _{+} $, 

  \item[3.] an open subset $ \omega _{-} $ directly below, at a distance $ 1 $, from the point $ \mathbf{X} _{+} $ of $ \omega _{+} $, such that the point with maximal vertical coordinate in $ \partial \omega _{-} \cap \Gamma$ form a subset of the horizontal plane $ x _{3}=0 $, 

  \item[4.] let ${\bf 0}$ be the origin of $ \mathbb{R}^{3} $, we assume that $ {\bf 0} \in \partial \omega _{-}\cap  \left\{ x _{3}=0 \right\}$. And $ \mathbf{X} _{+} =(0, 0, 1)$.
\end{enumerate}
\end{definition}
\begin{figure}[H]\label{Fig-dinau}
\centering
\subfigure[The domain $\Omega$ with boundary $\Gamma$]
{
\begin{overpic}[width=0.45\textwidth]{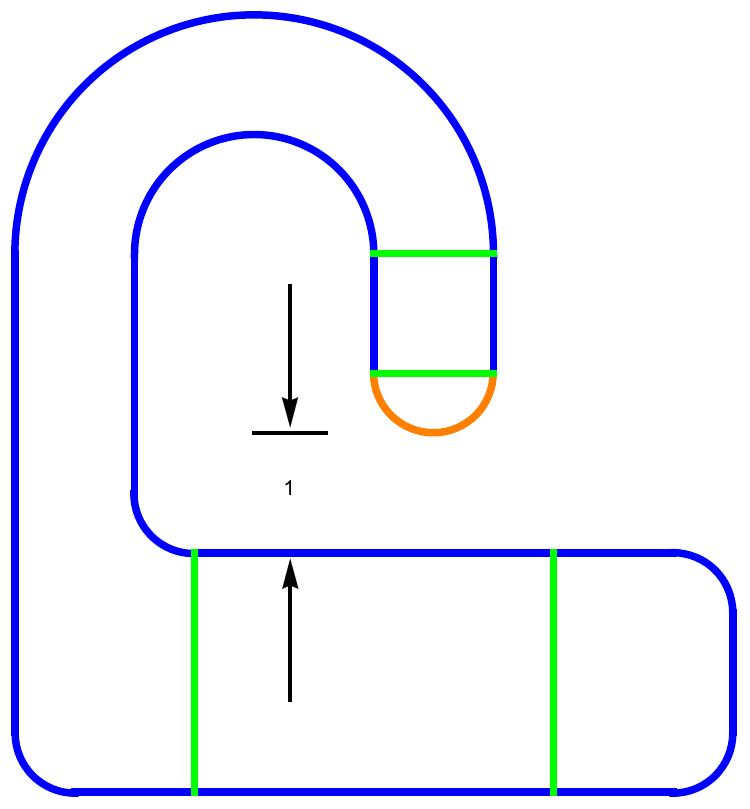}
  \put(62,85){\small\bfseries \color{black}{$\Gamma$}}
  \put(52,60){\small\bfseries \color{black}{$\omega$}}
  \put(52,50){\small\bfseries \color{black}{$\omega_+$}}
  \put(50,18){\small\bfseries \color{black}{$\omega_-$}}
  \put(80,18){\small\bfseries \color{black}{$\Omega$}}
\end{overpic}
}
\subfigure[The domains $\Omega^\varepsilon $ with boundary $\Gamma^ \varepsilon$]
{
\begin{overpic}[width=0.45\textwidth]{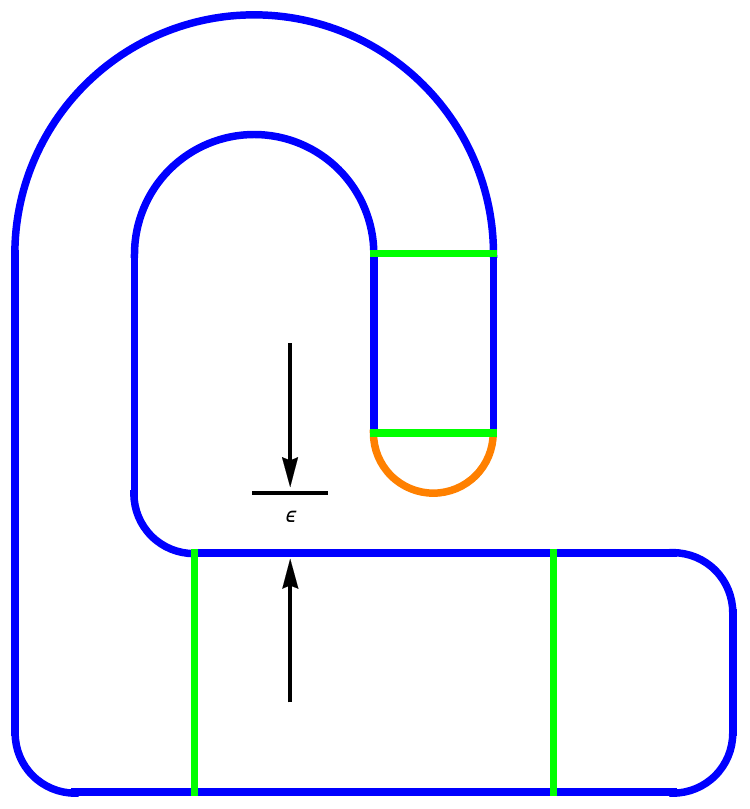}
  \put(62,85){\small\bfseries \color{black}{$\Gamma^\epsilon$}}
  \put(52,55){\small\bfseries \color{black}{$\omega^\epsilon$}}
  \put(52,42){\small\bfseries \color{black}{$\omega_+^\epsilon$}}
  \put(50,18){\small\bfseries \color{black}{$\omega_-$}}
  \put(80,18){\small\bfseries \color{black}{$\Omega^\epsilon$}}
\end{overpic}}
\caption{Fig The cross-section of the "dinosaur wave" domains}
\end{figure}

We now proceed to the definition of the sequence of initial domains $\Omega ^{\epsilon} $.
\begin{definition}[The initial domains $\Omega^ \epsilon $]\label{def-dino-e} Let $ \Omega $ be as in Definition \ref{Defi-OMDA}. For $ 0 < \epsilon \ll 1$, as shown in Figure \ref{Fig-dinau}(b), the $\epsilon$-modified domain  $\Omega ^{\epsilon} \subset \mathbb{R}  ^3$, with boundary
$\Gamma^ \epsilon $, is a smooth bounded domain with the following properties: 
\begin{enumerate}
\item[1.]  $\Omega ^{\epsilon}$  contains  an open subset $\omega^ \epsilon \subset \Omega^ \epsilon $, which is a vertical dilation of the domain $\omega$,
whose boundary
$\partial\omega^ \epsilon \cap \Gamma^ \epsilon $ is a vertical circular cylinder of radius $r$ and of length $h+1- \epsilon $,

\item[2.] $\Omega ^{\epsilon}$ contains an open subset $\omega_+^ \epsilon \subset\Omega^ \epsilon $, obtained by translating the set $\omega^+$ downward a distance $1- \epsilon $ vertically. Therefore, $\omega_+^ \epsilon $ is the lower-half of an open ball of radius $1$,
 below $\omega^ \epsilon $ directly. Denote the  ``south pole'' of $\omega_+^ \epsilon $ by $\mathbf{X}_+^ \epsilon $, 

\item[3.] $\Omega ^{\epsilon}$ contains an open subset $\omega_-\subset\Omega^ \epsilon $ which is a distance $ \epsilon $ from, and below directly, the point $\mathbf{X}_+^ \epsilon $,
such that the points with maximal vertical coordinate in $\partial\omega_-\cap \Gamma ^{\epsilon}$ form a subset of the horizontal plane $x_3=0$.
It is assumed that a $2$-dimensional ball of radius $\sqrt{ \epsilon }$ is contained in $\partial\omega_-\cap \Gamma ^{\epsilon}$,

\item[4.] let ${\bf 0}$ be the origin of $ \mathbb{R}^{3} $, it is assumed that  $ {\bf 0} \in \partial\omega_- \cap \{ x_3=0\}$. In this case, 
  $$ \mathbf{X}_{+}^{\epsilon}=(0, 0, \epsilon). $$ 
We denote $\mathbf{X}_-$ the point in $\partial \omega_- \cap \{ x_3=0\}$ with the same horizontal coordinates as $\mathbf{X}_+^ \epsilon $. Indeed, $\mathbf{X}_-={\bf 0}$. 
\end{enumerate}

\end{definition}

The above constructions can be found in \cite{Shkoller-2019-APAN}, we list them here for the convenience of readers.

Hereafter, $ \mathbf{N} $ and $ \mathbf{N}^{\epsilon} $ are used to denoted the unit outer normal of $ \partial\Omega $ and $ \partial \Omega ^{\epsilon} $, respectively. For each $ \alpha =1,2$ and $\mathbf{x} \in \Gamma^ \epsilon$ ,  $\boldsymbol{\tau}_{\alpha}  ^{\epsilon}(\mathbf{x})$ denotes an orthonormal basis of the $2$-dimensional tangent space to $ \Gamma^\epsilon$ at the point $\mathbf{x}$. Set $ \mathcal{B}_{l}=B ^{+} $ for $ l=1,2,\cdots,K $, and $ \mathcal{B}_{l}=B $ for $ l=K+1,\cdots,L $, where $ B $ and $ B ^{+} $ are as in Section \ref{localchart}. In what follows, we shall recall some facts from \cite{Shkoller-2019-APAN} in terms of the local charts of $ \Omega ^{\epsilon} $.
\begin{lemma}[\mbox{cf.~\cite[Sec. 3 and 5]{Shkoller-2019-APAN}}]\label{lem-te-l-bd}
Let $ \left( \mathcal{U}, \cup _{l=1}^{L}\theta _{l} \right)  $ be the local charts of $ \Omega $ defined in Section \ref{localchart}. Then $ \Omega ^{\epsilon} $ admits local charts $ \left(\mathcal{U}^{\epsilon}, \cup _{l=1}^{L}\theta _{l}^{\epsilon} \right)  $ such that $ \mathop{\mathrm{det}}\nabla \theta _{l}^{\varepsilon}=\tilde{C}_{l} $ for some constants $ \tilde{C}_{l}>0 $, and
\begin{align}\label{bd-te-l}
\displaystyle  \|\nabla \theta _{l}^{\epsilon}\|_{k-1,\mathcal{B}_{l}}  \leq \left( 1+ \frac{4}{h} \right)\|\nabla \theta _{l}\|_{k-1,\mathcal{B}_{l}} 
\end{align}
for $ k \geq 3 $, provided $ \epsilon $ is small enough, where $ h $ is as in \eqref{Defi-OMDA}. Moreover, there are cut-off functions $ \left\{ \xi _{l}^{\epsilon} \right\}_{l=1}^{L} $ with $ \xi _{l}^{\epsilon} $ supported in the image of the charts $ \theta _{l}^{\epsilon} $ such that $ \xi _{l}^{\epsilon}\circ \theta _{l}^{\epsilon}=\xi _{l}\circ \theta _{l} $ and  
\begin{align}\label{alm-unit}
\displaystyle  \sum _{l=1}^{L}\xi _{l}^{\epsilon}(\mathbf{x})\geq 1\ \ \forall \mathbf{x} \in \Omega ^{\epsilon}.
\end{align}
\end{lemma}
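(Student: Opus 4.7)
The plan is to construct an explicit $C^\infty$ diffeomorphism $\Phi^\epsilon\colon \Omega \to \Omega^\epsilon$ that realises the dinosaur deformation, and then set
\[
	\theta_l^\epsilon \;:=\; \Phi^\epsilon \circ \theta_l, \qquad U_l^\epsilon \;:=\; \Phi^\epsilon(U_l), \qquad \xi_l^\epsilon \;:=\; \xi_l \circ (\Phi^\epsilon)^{-1}.
\]
The natural ansatz for $\Phi^\epsilon$ is piecewise affine in the vertical direction: the identity on the portion of $\Omega$ strictly above the cylinder $\omega$ and on $\omega_-$; the vertical dilation $(x_1,x_2,x_3)\mapsto (x_1,x_2, \frac{h+1-\epsilon}{h} x_3 + c)$ on $\omega$, with $c$ chosen so that $\Phi^\epsilon$ is continuous at the top of the cylinder; and the rigid vertical translation $(x_1,x_2,x_3)\mapsto (x_1,x_2, x_3-(1-\epsilon))$ on $\omega_+$. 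These three pieces agree continuously across the two horizontal matching planes, and a standard mollification in two thin vertical strips of width independent of $\epsilon$ and $h$ produces a genuine $C^\infty$ diffeomorphism. By construction $\Phi^\epsilon$ maps $\Gamma$ to $\Gamma^\epsilon$, so the boundary/interior distinction between $\theta_l(B^+)$ and $\theta_l(B)$ is preserved.

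The covering property $\sum_l \xi_l^\epsilon\geq 1$ on $\Omega^\epsilon$ and the identity $\xi_l^\epsilon\circ\theta_l^\epsilon = \xi_l\circ\theta_l$ are then immediate from the corresponding properties of the original partition of unity on $\Omega$. For the constant-Jacobian condition I plan to refine the original atlas, if necessary, so that the closure of each $U_l$ lies inside one of the three affine pieces of $\Phi^\epsilon$, staying clear of the thin mollified strips. On such a chart $\Phi^\epsilon$ restricts to an affine map with constant Jacobian $D_l>0$, and hence $\det\nabla\theta_l^\epsilon = D_l\,C_l =: \tilde C_l$ is a positive constant.

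The derivative estimate \eqref{bd-te-l} then follows from the chain rule. Since $\nabla\Phi^\epsilon$ is a constant matrix on each $U_l$ in the refined atlas, one obtains
\[
	\|\nabla \theta_l^\epsilon\|_{k-1,\mathcal{B}_l} \;\leq\; \|\nabla \Phi^\epsilon|_{U_l}\|_{\mathrm{op}} \,\|\nabla \theta_l\|_{k-1,\mathcal{B}_l},
\]
and the operator norm is bounded by $\max\bigl\{1,\tfrac{h+1-\epsilon}{h}\bigr\}\leq 1+\tfrac{1}{h}$ for small $\epsilon$, which is comfortably within the claimed factor $1+\tfrac{4}{h}$.

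The main technical subtlety I anticipate is ensuring that the refined atlas still covers $\Omega^\epsilon$, which forces the inclusion of a small number of charts whose supports necessarily intersect the mollified transition strips. On these transition charts $\nabla\Phi^\epsilon$ is no longer constant, and both the constant-determinant property and the bound \eqref{bd-te-l} must be verified by explicit computation, using Moser-type estimates to control the Sobolev norm of the composition. The generous slack $4/h$ (as opposed to $1/h$) is exactly what absorbs the $O(1)$ derivatives of the mollifier kernel in these strips. Because the mollification acts in a single vertical direction and on a scale bounded below independently of $\epsilon$, these contributions remain $\epsilon$-independent, and the full estimate closes uniformly in $\epsilon$ provided $\epsilon$ is chosen sufficiently small.
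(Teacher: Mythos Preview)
The paper does not give its own proof of this lemma; it simply quotes the statement from \cite{Shkoller-2019-APAN}, so the comparison is between your sketch and the construction in that reference.

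Your global-diffeomorphism ansatz captures the right geometry, and the identity $\xi_l^\epsilon\circ\theta_l^\epsilon=\xi_l\circ\theta_l$ together with \eqref{alm-unit} does fall out immediately once one writes $\theta_l^\epsilon=\Phi^\epsilon\circ\theta_l$ and $\xi_l^\epsilon=\xi_l\circ(\Phi^\epsilon)^{-1}$. The operator-norm bound $1+1/h$ on the affine pieces is also correct.

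There is, however, a real gap in your handling of the constant-Jacobian condition. A mollified map of the form $\Phi^\epsilon(x)=(x_1,x_2,f^\epsilon(x_3))$ has $\det\nabla\Phi^\epsilon=(f^\epsilon)'(x_3)$, which is \emph{not} constant on the transition strips; hence $\det\nabla\theta_l^\epsilon=(f^\epsilon)'(\theta_l^3(y))\,C_l$ is not constant for any chart whose image meets those strips. Your proposal to ``verify the constant-determinant property by explicit computation'' there cannot succeed, because the property is simply false for such charts. Refining the atlas so that no chart meets the strips is not an option either: the strips contain pieces of $\Gamma$ (the circles where $\partial\omega$ meets the rest of $\Gamma$), and these must be covered by boundary charts. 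Since the paper later uses $J_l=\tilde C_l$ in the Piola identity precisely on boundary charts, this is not a cosmetic issue.

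The construction in \cite{Shkoller-2019-APAN} avoids the problem by dropping the global diffeomorphism altogether: each $\theta_l^\epsilon$ is obtained from $\theta_l$ by a \emph{chart-dependent} affine map (identity for charts near the top of the neck, translation by $-(1-\epsilon)e_3$ for charts touching $\omega_+$ and the bottom of the cylinder, and the vertical dilation only for charts whose closure lies strictly inside $\omega$). Charts are merely required to cover $\Omega^\epsilon$, not to glue into a single map, so no mollification is needed; every $\nabla\theta_l^\epsilon$ differs from $\nabla\theta_l$ by a constant matrix, and both $\det\nabla\theta_l^\epsilon=\tilde C_l$ and \eqref{bd-te-l} follow at once. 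The price is that \eqref{alm-unit} no longer comes for free and needs a short separate check on the overlaps of differently transformed charts.
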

It is worth noting that the estimate \eqref{bd-te-l} enables us to derive some elliptic and Sobolev inequalities in the domain $ \Omega ^{\varepsilon} $ with the constants independent of $ \varepsilon $. Clearly, we have  
\begin{lemma}[Estimates for the Stokes problem, \mbox{cf. \cite[Lemma 2]{Shkoller-2019-APAN}}]\label{lem-Stoke}
For integer $k \geq 2  $,  
the following Stokes problem 
\begin{align}
\displaystyle \begin{cases}
  - \Delta u + \nabla p=f, & \mbox{in }\Omega ^{\epsilon},\\
  \mathop{\mathrm{div}}\nolimits u= \psi& \mbox{in }\Omega ^{\epsilon},\\
  u =g & \mbox{on }\Gamma ^{\epsilon}.
\end{cases} 
\end{align}
admits a unique solution $ u \in H ^{k}(\Omega ^{\epsilon}) $ and $ p \in H ^{k-1}(\Omega ^{\epsilon}) $ provided $ f \in H ^{k-2}(\Omega ^{\epsilon}) $, $ \psi \in H ^{k-1}(\Omega ^{\epsilon}) $, and $ g \in H ^{k-1/2}(\Gamma ^{\epsilon})$, and $ \int _{\Omega ^{\epsilon}}\psi(x)\mathrm{d}\mathbf{x}= \int _{\Gamma ^{\epsilon}}g \cdot \mathbf{N} _{\epsilon}\mathrm{d}S $. Moreover, it holds that \begin{align}
\displaystyle \|u\|_{k, \Omega ^{\epsilon}}+ \|\nabla p\| _{k-1,\Omega ^{\epsilon}} \leq C (\|f\| _{k-2,\Omega ^{\epsilon}}+\|\psi\| _{k-1, \Omega ^{\epsilon}}+\| g \| _{k-1/2, \Gamma ^{\epsilon}})
\end{align}
for a positive constant $ C $ depending only on $ \Omega $, but not on $ \epsilon $.

\end{lemma}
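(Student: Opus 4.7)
The plan is to reduce the problem to standard Stokes estimates on the flat reference pieces $B$ and $B^+$ via the local charts $\theta_l^\epsilon$ produced in Lemma \ref{lem-te-l-bd}, and then exploit the $\epsilon$-independent bound \eqref{bd-te-l} on $\nabla \theta_l^\epsilon$ to conclude that the resulting constants do not depend on $\epsilon$.

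First, existence and uniqueness of $(u,p)$ (with $p$ unique up to a constant, which is why only $\|\nabla p\|_{k-1,\Omega^\epsilon}$ appears) follows from the classical theory for the Stokes problem on each fixed smooth domain $\Omega^\epsilon$, so the substance is the uniform estimate. I would localize by multiplying the system by the cut-offs $\xi_l^\epsilon$ of Lemma \ref{lem-te-l-bd}: set $u_l = \xi_l^\epsilon u$ and $p_l = \xi_l^\epsilon p$. Commuting $\xi_l^\epsilon$ past $-\Delta$ and $\nabla$ produces a Stokes system for $(u_l,p_l)$ whose right-hand sides are $\xi_l^\epsilon f + [\text{lower-order in }u,p]$ and $\xi_l^\epsilon \psi + \nabla\xi_l^\epsilon\cdot u$, with boundary data $\xi_l^\epsilon g$ on $\Gamma^\epsilon$. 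Because $\xi_l^\epsilon\circ\theta_l^\epsilon = \xi_l\circ\theta_l$, the derivatives of $\xi_l^\epsilon$ in $\mathbf{x}$ are controlled, via the chain rule and \eqref{bd-te-l}, by $\epsilon$-independent constants.

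Next, I would pull back $(u_l,p_l)$ to the reference set $\mathcal{B}_l$ via $\theta_l^\epsilon$. Since $\det\nabla\theta_l^\epsilon$ is a positive constant $\tilde C_l$, the transformed system is again a (variable-coefficient) Stokes system on $B$ (for interior charts, $l>K$) or on $B^+$ with Dirichlet condition on $B^0$ (for boundary charts, $1\le l\le K$), with coefficients built from $\nabla\theta_l^\epsilon$ and $(\nabla\theta_l^\epsilon)^{-1}$. The key point is that these coefficients satisfy bounds in $H^{k-1}$ (and hence $L^\infty$ for $k\ge 3$, with the $k=2$ case handled by a standard $H^2$ Stokes argument) that depend only on $\|\nabla\theta_l\|_{k-1,\mathcal{B}_l}$ and $h$, by \eqref{bd-te-l}. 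Applying the classical Stokes estimate on $B$ or on the half-ball $B^+$ with homogeneous boundary on the flat part then yields
\begin{align*}
\|u_l\circ\theta_l^\epsilon\|_{k,\mathcal{B}_l} + \|\nabla(p_l\circ\theta_l^\epsilon)\|_{k-1,\mathcal{B}_l} \le C\bigl(\|f\|_{k-2,\Omega^\epsilon} + \|\psi\|_{k-1,\Omega^\epsilon} + \|g\|_{k-1/2,\Gamma^\epsilon} + \|u\|_{k-1,\Omega^\epsilon} + \|p\|_{k-2,\Omega^\epsilon}\bigr),
\end{align*}
with $C$ independent of $\epsilon$. Pushing forward through $(\theta_l^\epsilon)^{-1}$ (again using \eqref{bd-te-l} and $\det\nabla\theta_l^\epsilon = \tilde C_l$) controls $\|u_l\|_{k,\Omega^\epsilon}+\|\nabla p_l\|_{k-1,\Omega^\epsilon}$ by the same right-hand side.

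Finally, I would sum over $l=1,\dots,L$ and use \eqref{alm-unit} to bound $\|u\|_{k,\Omega^\epsilon}+\|\nabla p\|_{k-1,\Omega^\epsilon}$. The lower-order terms $\|u\|_{k-1,\Omega^\epsilon}+\|p\|_{k-2,\Omega^\epsilon}$ produced by the commutators are absorbed either by a standard Ehrling/interpolation argument combined with a basic energy estimate in $L^2$ (which gives $\|u\|_{1,\Omega^\epsilon}+\|p\|_{0,\Omega^\epsilon}/\mathbb{R}$ bounded by the data with an $\epsilon$-independent constant, since the Korn and Bogovskii inequalities on $\Omega^\epsilon$ have uniform constants by the same local-chart argument), or by induction on $k$, starting from $k=2$. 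The main technical obstacle is ensuring uniformity in $\epsilon$ at every step: this is exactly where \eqref{bd-te-l} is used, together with the fact that $\det\nabla\theta_l^\epsilon$ is a constant, so that no hidden $\epsilon$-dependence enters through the Jacobians, the Korn constant, or the Bogovskii lift used to reduce to zero-divergence data.
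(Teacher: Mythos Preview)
Your sketch is correct and is precisely the localization-and-pullback argument that underlies the cited result. Note that the paper does not actually prove this lemma: it is stated with a direct citation to \cite[Lemma~2]{Shkoller-2019-APAN}, the only in-paper remark being the sentence preceding it that the bound \eqref{bd-te-l} ``enables us to derive some elliptic and Sobolev inequalities in the domain $\Omega^\varepsilon$ with the constants independent of $\varepsilon$.'' Your proposal simply spells out what that sentence means, so there is nothing to compare beyond saying your approach coincides with the one the paper defers to.
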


\begin{lemma}[\mbox{cf. \cite[Lemma 3]{Shkoller-2019-APAN}}]\label{lem-Sobolev}
It holds that
\begin{align*}
\max _{\mathbf{x}\in \Omega ^{\epsilon}}\vert u (\mathbf{x})\vert \leq C \|u\|_{s,\Omega ^{\epsilon}},\ \  \forall u \in H ^{s}(\Omega ^{\epsilon}),\ \ s >1.5,
\end{align*}
where $C$ is a positive constant depending only on the domain $ \Omega$, but not on $\epsilon$.
\end{lemma}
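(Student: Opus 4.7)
The plan is to reduce the inequality to the standard Sobolev embedding $H^s\hookrightarrow L^\infty$ on the \emph{fixed} reference balls $B$ and $B^+$, with the $\epsilon$-independence of the constant stemming entirely from the uniform chart bound provided by Lemma \ref{lem-te-l-bd}. Using the partition-of-unity property \eqref{alm-unit}, for any $\mathbf{x}\in\Omega^\epsilon$ I would write
$$|u(\mathbf{x})|\;\le\;\sum_{l=1}^{L}\xi_l^\epsilon(\mathbf{x})\,|u(\mathbf{x})|\;\le\;\sum_{l=1}^{L}\bigl\|\xi_l^\epsilon u\bigr\|_{L^\infty(\Omega^\epsilon)},$$
so the task reduces to bounding each $\|\xi_l^\epsilon u\|_{L^\infty(\Omega^\epsilon)}$ by $\|u\|_{s,\Omega^\epsilon}$ with an $\epsilon$-independent constant.

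I would transplant the $l$th cut-off factor to the reference domain by setting $v_l:=(\xi_l^\epsilon u)\circ\theta_l^\epsilon$ on $\mathcal{B}_l$. Since $\theta_l^\epsilon$ is a diffeomorphism onto the image of the chart, one has $\|\xi_l^\epsilon u\|_{L^\infty(\Omega^\epsilon)}=\|v_l\|_{L^\infty(\mathcal{B}_l)}$, and the standard Sobolev embedding on the fixed domain $\mathcal{B}_l$ yields $\|v_l\|_{L^\infty(\mathcal{B}_l)}\le C_0(s)\,\|v_l\|_{s,\mathcal{B}_l}$ with a universal constant $C_0(s)$. The heart of the matter is then the uniform pull-back estimate $\|v_l\|_{s,\mathcal{B}_l}\le C(\Omega)\,\|u\|_{s,\Omega^\epsilon}$. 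For integer $k\ge 2$, the chain rule expresses $\nabla^{a}v_l$ (for $|a|\le k$) as a sum of products of $\nabla^{b}(\xi_l^\epsilon u)\circ\theta_l^\epsilon$ with derivatives of $\theta_l^\epsilon$ up to order $|a|$. Three ingredients combine to give $\epsilon$-uniformity: the identity $\xi_l^\epsilon\circ\theta_l^\epsilon=\xi_l\circ\theta_l$ replaces the cut-off by a fixed smooth function on $\mathcal{B}_l$; the constancy $\det\nabla\theta_l^\epsilon=\tilde{C}_l$ renders the change-of-variables Jacobian $\epsilon$-free; and \eqref{bd-te-l}, applied with a sufficiently high order and combined with the classical embedding $H^{k-1}(\mathcal{B}_l)\hookrightarrow L^\infty$, furnishes $L^\infty$ bounds on every derivative of $\theta_l^\epsilon$ that appears, uniformly in $\epsilon$.

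For general real $s>3/2$ I would pass from the integer case by interpolation between the two straddling integer Sobolev spaces. Since the push-forward and pull-back operators $u\leftrightarrow v_l$ are bounded between $H^k(\Omega^\epsilon)$ and $H^k(\mathcal{B}_l)$ for every integer $k$ with $\epsilon$-independent norms, the interpolated estimate inherits this uniformity, so that $\|v_l\|_{s,\mathcal{B}_l}\le C(\Omega)\,\|u\|_{s,\Omega^\epsilon}$, and chaining the inequalities closes the argument. I do not anticipate any genuine obstacle beyond bookkeeping: the entire content of the lemma is the uniform chart bound \eqref{bd-te-l}, and the proof amounts to unpacking what that bound gives when combined with the classical embedding on a fixed domain. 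The only point requiring mild care is verifying that the interpolation constants are themselves $\epsilon$-independent, which is automatic once the uniform chart estimates are available on every integer scale.
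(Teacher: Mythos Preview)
The paper does not supply its own proof of this lemma; it merely cites \cite[Lemma~3]{Shkoller-2019-APAN} and states the result. Your argument is the standard and correct one underlying that citation: localize via the almost-partition-of-unity \eqref{alm-unit}, pull back each $\xi_l^\epsilon u$ to the fixed reference domain $\mathcal{B}_l$ through $\theta_l^\epsilon$, apply the classical Sobolev embedding there, and invoke the uniform chart bound \eqref{bd-te-l} together with the constant Jacobian $\det\nabla\theta_l^\epsilon=\tilde C_l$ and the identity $\xi_l^\epsilon\circ\theta_l^\epsilon=\xi_l\circ\theta_l$ to make every constant independent of $\epsilon$. There is nothing to compare against in the paper itself, and your outline has no substantive gap; the only point to keep tidy is that the uniform $L^\infty$ control on $\nabla\theta_l^\epsilon$ (hence on its cofactors and on $(\nabla\theta_l^\epsilon)^{-1}$ via the constant determinant) follows from \eqref{bd-te-l} with $k\ge 3$ and the embedding $H^2(\mathcal{B}_l)\hookrightarrow L^\infty$, which you already note.
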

\begin{lemma}[\mbox{cf. \cite[Lemma 4]{Shkoller-2019-APAN}}]
It holds that,
\begin{align*}
\displaystyle  \max _{\mathbf{x} \in \Gamma ^{\epsilon}}\vert u(x)\vert \leq C \|u\|_{s,\Gamma ^{\epsilon}},\ \ \forall u \in H ^{s}(\Gamma ^{\epsilon}) ,\ \ s>1, 
\end{align*}
where $C$ is a constant depending only on the domain $ \Omega$, but not on $\epsilon$.

\end{lemma}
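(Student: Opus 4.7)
The plan is a standard partition-of-unity argument that reduces the claim to the classical embedding $H^s(B^0)\hookrightarrow L^\infty(B^0)$ for $s>1$ on the two-dimensional disc, with uniformity in $\epsilon$ inherited entirely from Lemma \ref{lem-te-l-bd}. This is the boundary analogue of the preceding Lemma \ref{lem-Sobolev}, and proceeds by the same mechanism.

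First I would use the cut-off functions $\{\xi_l^\epsilon\}_{l=1}^L$ supplied by Lemma \ref{lem-te-l-bd}. Since the cut-offs with $l\in\{K+1,\ldots,L\}$ correspond to interior charts contained in $\Omega^\epsilon$, they vanish on $\Gamma^\epsilon$; so \eqref{alm-unit} forces $\sum_{l=1}^K \xi_l^\epsilon \geq 1$ on $\Gamma^\epsilon$, giving $|u(\mathbf{x})| \leq \sum_{l=1}^K |\xi_l^\epsilon u(\mathbf{x})|$ for every $\mathbf{x}\in\Gamma^\epsilon$. It then suffices to bound each localized piece $\|\xi_l^\epsilon u\|_{L^\infty(\Gamma^\epsilon \cap U_l^\epsilon)}$ by $C\|u\|_{s,\Gamma^\epsilon}$ with $C$ independent of $\epsilon$.

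For each $l\in\{1,\ldots,K\}$, I would pull back through the boundary chart and set $v_l(y):=(\xi_l^\epsilon u)(\theta_l^\epsilon(y))$ for $y\in B^0$. The chain rule, combined with the $\epsilon$-uniform estimate \eqref{bd-te-l} on $\nabla \theta_l^\epsilon$ and the fixed determinant $\det\nabla\theta_l^\epsilon=\tilde C_l>0$ (which controls the surface-element factor), yields
\[
\|v_l\|_{k,B^0} \leq C\,\|\xi_l^\epsilon u\|_{k,\Gamma^\epsilon}
\]
for every integer $k\geq 0$, with $C$ independent of $\epsilon$. Real exponents $s$ follow by the interpolation definition of $H^s(\Gamma^\epsilon)$ given in Section \ref{sec:sobolev-spaces-gamma}, with interpolation constants that depend only on the ambient integer-order norms and are therefore also $\epsilon$-uniform. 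Invoking the classical two-dimensional Sobolev embedding $H^s(B^0)\hookrightarrow L^\infty(B^0)$ for $s>1$, I obtain
\[
\sup_{\Gamma^\epsilon\cap U_l^\epsilon}|\xi_l^\epsilon u| = \sup_{B^0}|v_l| \leq C\,\|v_l\|_{s,B^0} \leq C\,\|u\|_{s,\Gamma^\epsilon}.
\]
Summing over $l=1,\ldots,K$ completes the proof.

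The main obstacle---really the only nontrivial bookkeeping point---is keeping all constants independent of $\epsilon$. This is handled entirely by \eqref{bd-te-l}: the Jacobians of the charts $\theta_l^\epsilon$ satisfy an $\epsilon$-uniform $H^{k-1}$ bound, and $\det\nabla\theta_l^\epsilon=\tilde C_l>0$ is literally constant in $\epsilon$, so the chain rule, interpolation, and the embedding on the fixed model disc $B^0$ each produce $\epsilon$-independent constants automatically. No further subtlety arises because $B^0$ and its embedding constant are pure ambient data.
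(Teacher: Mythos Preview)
Your argument is correct and is precisely the standard partition-of-unity reduction to the fixed model disc $B^0$; the paper itself does not supply a proof but simply cites \cite[Lemma~4]{Shkoller-2019-APAN}, where the same mechanism is used. The only point worth making explicit is that the pullback of the cut-off, $\xi_l^\epsilon\circ\theta_l^\epsilon=\xi_l\circ\theta_l$, is literally $\epsilon$-independent by Lemma~\ref{lem-te-l-bd}, so the product-rule terms arising from $\xi_l^\epsilon$ introduce no $\epsilon$-dependence either.
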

\begin{lemma}[\mbox{cf. \cite[Lemma 5]{Shkoller-2019-APAN}}]\label{lem-trace}
 There exists a constant $ C>0 $ depending only on the domain $ \Omega $, but not on $\epsilon$, such that for $ s \in (\frac{1}{2},3] $, 
\begin{align*}
\displaystyle \|u\|_{s- \frac{1}{2},\Gamma ^{\epsilon}} \leq C \|u\|_{s,\Omega ^{\epsilon}}, \ \ \forall u \in  H ^{s}(\Omega ^{\epsilon}). 
\end{align*}

\end{lemma}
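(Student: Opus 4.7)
The plan is to reduce everything to the classical trace inequality on the half-ball $B^+$ via the boundary local charts provided by Lemma \ref{lem-te-l-bd}, and then leverage the uniform-in-$\epsilon$ bounds on $\nabla\theta_l^\epsilon$ and on $\det\nabla\theta_l^\epsilon=\tilde C_l$ to keep the constant independent of $\epsilon$. Note first that for the interior charts $l=K+1,\dots,L$ the cutoff $\xi_l^\epsilon$ is supported inside $\Omega^\epsilon$ and therefore vanishes on $\Gamma^\epsilon$; combined with \eqref{alm-unit}, this yields $\sum_{l=1}^{K}\xi_l^\epsilon\ge 1$ on $\Gamma^\epsilon$, so the boundary norm can be dominated by the sum of the boundary norms of the localized pieces $\xi_l^\epsilon u$ for $l=1,\dots,K$.

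For each boundary chart $l\in\{1,\dots,K\}$ I would set $v_l:=(\xi_l^\epsilon u)\circ\theta_l^\epsilon$ on $B^+$; this function has compact support in $B$, and its trace on the flat face $B^0$ is $((\xi_l^\epsilon u)|_{\Gamma^\epsilon})\circ\theta_l^\epsilon$. The classical trace theorem on the half-ball gives, for every $s\in(\tfrac12,3]$,
\begin{equation*}
  \|v_l\|_{s-\frac12,B^0}\le C_0\,\|v_l\|_{s,B^+},
\end{equation*}
with a universal constant $C_0$ depending only on $B$. The change-of-variables formula applied to both sides brings in factors involving $\nabla\theta_l^\epsilon$ and $\nabla(\theta_l^\epsilon)^{-1}$ together with the Jacobian. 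Since $\det\nabla\theta_l^\epsilon=\tilde C_l$ is a fixed positive constant, the Jacobian contributes a harmless uniform factor; the derivative factors of $\theta_l^\epsilon$ and of its inverse are controlled in $H^{k-1}$ (hence in $L^\infty$ and in every lower-order norm) by \eqref{bd-te-l}, uniformly in $\epsilon$. For integer $s\in\{1,2,3\}$ this gives directly
\begin{equation*}
  \|\xi_l^\epsilon u\|_{s-\frac12,\Gamma^\epsilon}\le C\,\|\xi_l^\epsilon u\|_{s,\Omega^\epsilon}\le C\,\|u\|_{s,\Omega^\epsilon}
\end{equation*}
with $C$ independent of $\epsilon$; summing over $l=1,\dots,K$ and using $\sum_{l=1}^K\xi_l^\epsilon\ge 1$ on $\Gamma^\epsilon$ yields the stated inequality at integer $s$.

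For non-integer $s\in(\tfrac12,3]$ I would obtain the estimate by interpolation from the integer endpoints, using the definition of $H^s(\Omega^\epsilon)$ and $H^{s-1/2}(\Gamma^\epsilon)$ in Sections \ref{sec:diff-norms-open}--\ref{sec:sobolev-spaces-gamma}. Since the interpolation constants depend only on the interpolation couple and not on $\epsilon$, the resulting constant is again uniform in $\epsilon$. The one point that requires care, and which I view as the main (mild) obstacle, is ensuring that all geometric constants produced in the pullback/pushforward step truly depend only on $\Omega$: this is guaranteed because $\|\nabla\theta_l^\epsilon\|_{k-1,\mathcal{B}_l}\le(1+4/h)\|\nabla\theta_l\|_{k-1,\mathcal{B}_l}$ for $k\ge 3$ and because $\det\nabla\theta_l^\epsilon$ is the fixed constant $\tilde C_l$, so the inverse charts inherit uniform bounds as well. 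Once this book-keeping is done, the inequality follows immediately by combining the chart estimate, the trace theorem on $B^+$, and the interpolation step, exactly as in the proof of \cite[Lemma 5]{Shkoller-2019-APAN}.
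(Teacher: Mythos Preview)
The paper does not give its own proof of this lemma; it simply cites \cite[Lemma~5]{Shkoller-2019-APAN}. Your proposal is correct and is precisely the standard argument behind that cited result: localize via the boundary charts $\theta_l^\epsilon$, invoke the classical trace inequality on $B^+$, and transfer back using the uniform-in-$\epsilon$ control on $\nabla\theta_l^\epsilon$ and $\det\nabla\theta_l^\epsilon=\tilde C_l$ from Lemma~\ref{lem-te-l-bd}, with interpolation to cover non-integer $s$.
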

In the next lemma, we introduce a regularity result for the div-curl system, which will be used later for the construction of the initial magnetic field. 
\begin{lemma}[Estimates for div-curl system, \mbox{cf. \cite[Theorem 1.1]{Cheng-Shkoller-2017-JMFM}}]\label{lem-div-curl}
Suppose that $\Omega\subseteq \mathbb{R}^3$ is a simply connected, bounded $H^{k+1}\,(k>1.5)$-domain. Let $ N $ be  the unit outer normal vector field on $ \partial \Omega $. Assume that a vector field $F\in H^{m-1}(\Omega)$ and a function $g\in H^{m-1}(\Omega)$  ($ 1\leq m \leq k$) satisfy (1) $\operatorname{div} F =0$, (2) $\int_{\Gamma} F \cdot N \mathrm{d}S =0 $ for each connected component $\Gamma$ of $\partial\Omega$, (3) $\int_{\partial\Omega} h \mathrm{d}S =\int_{\Omega} g \mathrm{d}x $ for some function $h\in H^{m-0.5}(\partial\Omega ) $. Then  the system
\begin{subequations}
\label{dc-eq}
\begin{alignat}{2}
\operatorname{curl} v &=F \ \ && \text{ in } \Omega \,,  \\
\operatorname{div} v  &= g \ \ && \text{ in } \Omega  \,, \\
v\cdot N &=h \ \ && \text{ on } \partial\Omega 
\end{alignat}
\end{subequations}
admits a unique solution $v\in H^m(\Omega)$. Moreover, it holds the following estimates: 
\begin{equation}\label{dc-est}
\|v\|_{H^m(\Omega)}\leq C(\vert \partial \Omega \vert ^{k+0.5})[\|F\|_{H^{m-1}(\Omega)}+\|g\|_{H^{m-1}(\Omega)}+\|h\|_{H^{m-0.5}(\Omega)} ],
\end{equation}
where $ \vert \partial \Omega \vert ^{k+0.5} $ represents the $ H ^{k+0.5} $-regularity of the boundary.
\end{lemma}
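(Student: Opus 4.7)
The plan is to use a Helmholtz-type splitting $v = \nabla\varphi + w$ that decouples the divergence and normal-trace conditions from the curl condition.

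For the gradient part, I would solve the Neumann problem
\[
\Delta\varphi = g \text{ in } \Omega, \qquad \frac{\partial\varphi}{\partial N} = h \text{ on } \partial\Omega.
\]
Hypothesis (3) is precisely the compatibility condition for solvability, and the standard elliptic regularity theorem on an $H^{k+1}$-domain supplies
\[
\|\nabla\varphi\|_{H^m(\Omega)} \le C(|\partial\Omega|^{k+1/2})\bigl(\|g\|_{H^{m-1}(\Omega)} + \|h\|_{H^{m-1/2}(\partial\Omega)}\bigr).
\]

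The reduced problem, after setting $w := v - \nabla\varphi$, is to find $w$ with $\operatorname{curl} w = F$ and $\operatorname{div} w = 0$ in $\Omega$ and $w\cdot N = 0$ on $\partial\Omega$. I would first produce a vector potential $\psi \in H^{m}(\Omega;\mathbb{R}^3)$ with $\operatorname{curl}\psi = F$ and $\|\psi\|_{H^{m}(\Omega)}\le C\|F\|_{H^{m-1}(\Omega)}$; existence follows from the Poincar\'e-type lemma for divergence-free vector fields on simply connected domains with vanishing boundary flux, i.e.\ from hypotheses (1)--(2) together with the topology of $\Omega$. Concretely, one extends $F$ divergence-freely across $\partial\Omega$ into an auxiliary smooth domain and applies the Biot--Savart integral. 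A gauge correction $w := \psi - \nabla\eta$ then enforces the remaining constraints, where $\eta$ solves the Neumann problem
\[
\Delta\eta = \operatorname{div}\psi \text{ in } \Omega, \qquad \frac{\partial\eta}{\partial N} = \psi\cdot N \text{ on } \partial\Omega,
\]
whose compatibility is automatic by the divergence theorem. Elliptic regularity for $\eta$ combined with the bound on $\psi$ yields $\|w\|_{H^m(\Omega)} \le C\|F\|_{H^{m-1}(\Omega)}$.

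Uniqueness follows from simple-connectedness: the difference of two solutions is curl-free, divergence-free, and has vanishing normal trace, hence equals $\nabla\zeta$ for a harmonic $\zeta$ with zero Neumann data, which is constant, so the difference vanishes identically. The main obstacle will be constructing the vector potential $\psi$ with sharp regularity on a domain whose boundary has only $H^{k+1/2}$-regularity, and arranging that every constant in the cascade of elliptic estimates depends on $|\partial\Omega|^{k+1/2}$ alone rather than on higher norms of $\partial\Omega$. This calls for the low-regularity boundary elliptic theory (localization through charts, interpolation between $L^2$ and top-order estimates, and a sharp trace lemma tuned to $H^{k+1}$ boundaries), which is exactly what the Cheng--Shkoller framework supplies; the rest of the proof is a routine assembly of standard Neumann and Poisson estimates.
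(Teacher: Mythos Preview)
The paper does not give its own proof of this lemma; it is stated with the citation ``cf.\ \cite[Theorem 1.1]{Cheng-Shkoller-2017-JMFM}'' and used as a black box in the construction of the initial magnetic field. Your Helmholtz-splitting outline ($v=\nabla\varphi+w$, Neumann problem for $\varphi$, vector potential plus gauge correction for $w$, uniqueness via simple-connectedness) is a correct and standard route to such a result, and you have accurately identified the genuine technical content---obtaining sharp elliptic and trace estimates on domains with only $H^{k+1}$ boundary regularity and tracking that all constants depend on $|\partial\Omega|^{k+0.5}$---as precisely what the cited Cheng--Shkoller paper supplies. There is nothing to correct; your sketch simply fills in what the present paper leaves to the reference.
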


\subsection{The initial velocity field} 
\label{ssub:subsubsection_name}
As in \cite{Shkoller-2019-APAN}, let $g_0^\epsilon $ be a smooth function defined on $\Gamma ^ \epsilon $ such that
(1) $g_0^ \epsilon = -1$  in a small neighborhood of $X_+^ \epsilon $ on $\partial \omega_+^ \epsilon $
(2) $g_0^ \epsilon =0$ on $\partial \omega_- $ and $\partial \omega^ \epsilon\cap \Gamma^\epsilon  $
(3) $\int_{ \Gamma ^ \epsilon } g_0^ \epsilon \, \mathrm{d} S=0$
(4) $\|g_0^ \epsilon \|_{2.5,\Gamma^ \epsilon } \le m_0 < \infty $, with $m_0$ is independent on $ \epsilon $. Then we can define the initial velocity field $\mathbf{u}_0^ \epsilon $ at $t=0$ as the solution to the following Stokes problem:
\begin{subequations}
\label{Stokes2}
\begin{alignat}{2}
- \Delta \mathbf{u}_0^ \epsilon  + \nabla s_0^ \epsilon & = \mathbf{0}  \ && \text{ in } \Omega^ \epsilon\,,  \\
\operatorname{div} \mathbf{u}_0^ \epsilon  &= 0 \ \ && \text{ in } \Omega^ \epsilon  \,,  \\
[\operatorname{Def} \mathbf{u}_0^ \epsilon \cdot \mathbf{N}^ \epsilon ] \cdot \boldsymbol{\tau} _ \alpha ^ \epsilon  &=0 \ \ && \text{ on } \Gamma^ \epsilon \,, \\
\mathbf{u}_0^ \epsilon \cdot \mathbf{N}^ \epsilon  &=g _{0}^{\epsilon} \ \ && \text{ on } \Gamma^ \epsilon \,,
\end{alignat}
\end{subequations}
Thanks to the elliptic theory of the above elliptic system (cf. \cite{Amrouche-stokes-2011}) alongside \eqref{bd-te-l}, we have
\begin{align*}
\displaystyle  \|\mathbf{u}_0^ \epsilon \|_{3, \Omega^ \epsilon } \le C \|g _{0}^ \epsilon \|_{2.5,\Gamma^ \epsilon } \leq Cm _{0},
\end{align*}
where the constant $C>0$ is independent of $\epsilon$.

\subsection{The initial magnetic field} 
\label{sec:the_initial_magnetic_field}
We first define a vector field $\mathbf{F}_0$ in $\overline{\Omega}$ such that (1) $\mathbf{F}_0 \in (C^\infty(\overline{\Omega}))^3$, (2) $\operatorname{div} \mathbf{F}_0 =0 $ in $\Omega$, (3) $\int_{\partial \omega \cap \Gamma} \mathbf{F}_0 \cdot \mathbf{N} \mathrm{d}S =0 $ and $\int_{\Gamma} \mathbf{F}_0 \cdot \mathbf{N} \mathrm{d}S =0 $, (4)$\mathbf{F}_0^3=0$ and $\partial_{x_3} \mathbf{F}_0 =0$ on $\partial \omega - (\overline{\omega} \cap \Gamma)$, (5)$\| \mathbf{F}_0 \|_{H^1(\Omega)} <\infty$. 
And then for each $0<\epsilon \ll 1$, we define a vector field $\mathbf{F}_0^\epsilon$ in $\Omega^\epsilon $. For $\mathbf{x} \in \omega^\epsilon \cup (\partial \omega \cap \Gamma)$, 
\begin{align*}
\displaystyle ( F_0^\epsilon)^j (x_1,x_2,x_3)=\begin{cases}
  \displaystyle (F_0)^j(x_1,x_{2},2+h-\frac{h(2+h-x_3)}{(h+1-\epsilon )}),& j = 1,2, \\[3mm]
  \displaystyle \frac{h+1-\epsilon}{h }F_0^j(x_1,x_{2},2+h-\frac{h(2+h-x_3)}{(h+1-\epsilon )}),& j =3.
\end{cases}
\end{align*}
For $\mathbf{x} \in \Omega-\omega^\epsilon \cup (\partial \omega \cap \Gamma)$, we define
\begin{align*}
\displaystyle \mathbf{F}_0^\epsilon(x_1,x_2,x_3)=\begin{cases}
  \displaystyle \mathbf{F}_0(x_1,x_{2},x_3+1-\epsilon),& \mathbf{x} \in \omega_+^\epsilon, \\[2mm]
  \displaystyle \mathbf{F}_0(x_1,x_2,x_3),& else. \\
\end{cases}
\end{align*}
By the definition of $\mathbf{F}_0^\epsilon$, we have $\operatorname{div} \mathbf{F}_0^\epsilon =0 $ in $\Omega$. The condition (4) for $\mathbf{F}_0$ ensures that $\mathbf{F}_0^\epsilon$ is still in $H^1(\Omega)$, and (3) ensures $\int_{\Gamma} \mathbf{F}_0^\epsilon \cdot \mathbf{N}^{\epsilon} \mathrm{d}S =0 $. Then we can define $\mathbf{H}_0^\epsilon$ by the following div-curl system:
\begin{subequations}
  \label{h0}
\begin{alignat}{2}
& \mathop{\rm curl}\nolimits  \mathbf{H}_{0}^{\epsilon}= \mathbf{F}_0^\epsilon \ \  \ &&\text{in} \ \ \Omega^ \epsilon  ,\\
& \mathop{\mathrm{div}}\nolimits \mathbf{H}_{0}^{\epsilon} = 0 \ \  \ &&\text{in} \ \ \Omega^ \epsilon  ,\\
&\mathbf{H} _{0}^{\epsilon}\cdot \mathbf{N}^{\epsilon} =0 \ \ &&\text{on} \ \ \Gamma^ \epsilon   .
\end{alignat}
\end{subequations}
According to Lemma \ref{lem-div-curl}, the initial magnetic filed $ \mathbf{H}_{0}^{\epsilon} $ does exist, and satisfies the following estimates: 
\begin{equation}\label{h0-est}
\|\mathbf{H}_0^ \epsilon \|_{2, \Omega^ \epsilon } \le C(\vert \partial\Omega^\epsilon  \vert_{H^{2.5}})  \| \mathbf{F}_0 ^\epsilon \|_{1,\Omega^ \epsilon}^{2} \,
\end{equation} 
Recalling the definition of $\theta_l(l=1,...,K)$ and $\Omega^\epsilon$ as well as \eqref{bd-te-l}, we know that there is a constant $C>0$ independent of $\epsilon$ such that $\vert \partial\Omega^\epsilon  \vert_{H^{2.5}} \leq C \vert \partial\Omega \vert_{H^{2.5}}$. Furthermore, if we assume $ \|\mathbf{F} _{0}\|_{1,\Omega } \leq m _{1} $ for some constant $ m _{1}>0 $ independent of $ \epsilon $. Then we have  $\| \mathbf{F}_0 ^\epsilon \|_{1,\Omega^ \epsilon}^{2} \leq C \| \mathbf{F}_0 \|_{1,\Omega}^{2} \leq Cm _{1} $, where the constant $ C> 0$ is independent of $ \epsilon $. This means that we can construct a family of initial magnetic fields $ \mathbf{H}_{0}^{\epsilon} $ such that 
\begin{gather*}
\displaystyle \|\mathbf{H}_{0}^{\epsilon}\|_{2, \Omega ^{\epsilon}} \leq C m _{1}. 
\end{gather*}
\begin{remark}
The admissible set of $ \mathbf{F}_{0} $ is not empty. For example, one may take $ \mathbf{F}_{0}=(x _{2},-x _{1},0) $.
\end{remark}
\begin{remark}
In the above construction on the initial magnetic field, we do not require that the initial magnetic field vanishes at the boundary. And we emphasize that one can construct the initial magnetic field with zero boundary value in a much simpler way. Indeed, we can define $ \mathbf{H}_{0}^{\epsilon} $ by solving the following Stokes problem:
\begin{align*}
\displaystyle \begin{cases}
 \displaystyle - \Delta \mathbf{H} _{0}^{\epsilon}+\nabla \chi _{0}^{\epsilon} =\mathbf{G} _{0}^{\epsilon} & \mbox{in }\Omega_{\epsilon},\\
  \displaystyle  \mathop{\mathrm{div}}\nolimits \mathbf{H} _{0}^{\epsilon}=0 & \mbox{in }\Omega_{\epsilon},\\ 
  \displaystyle\mathbf{H} _{0}^{\epsilon}=0 & \mbox{on }\Gamma_{\epsilon},
\end{cases} 
\end{align*}
where $ \mathbf{G} _{0}^{\epsilon} \in L ^{2}(\Omega ^{\epsilon}) $ with $ \| \mathbf{G} _{0}^{\epsilon}\| _{0,\Omega ^{\epsilon}} \leq C$ for some constant $ C>0 $ independent of $ \epsilon $. This kind of function $\mathbf{G} _{0}^{\epsilon} $ can be easily constructed by virtue of the coordinate charts $ \theta _{l} $ and some fixed vector-valued function $ \mathbf{G} _{0}\in L ^{2}(B)$. However, according to \cite{Gu-Wangyanjin-JMPA}, if the initial magnetic field vanishes at the boundary, the magnetic field will always vanish at the boundary in the lifespan of the solution. Therefore, to some extent, our construction on the initial magnetic field is quite general.
\end{remark}


\subsection{The initial pressure function}
With the initial velocity field $\mathbf{u}_{0}^{ \epsilon}$ and initial magnetic field $\mathbf{H} _{0}^{\epsilon}$, we define the initial total pressure $ P _{0}^{\epsilon} $ through the following elliptic problem:
\begin{subequations}
  \label{p0}
\begin{alignat}{2}
- \Delta P_0^ \epsilon   &=  (u_0^ \epsilon )^i,_j (u_0^ \epsilon )^j,_i  -\frac{1}{4\pi}H _{0}^{i}, _{j} H _{0} ^{j}, _{i} \ \  \ &&\text{in} \ \ \Omega^ \epsilon  \,,\\
 P_0^ \epsilon  &=  \mathbf{N}^{\epsilon}  \cdot \left[ \nu   \operatorname{Def} \mathbf{u}_0^ \epsilon  \cdot \mathbf{N}^{\epsilon}  \right] \ \ &&\text{on} \ \ \Gamma^ \epsilon   \,.
\end{alignat}
\end{subequations}
Then by the standard regularity theory of elliptic equations along with the estimate \eqref{bd-te-l}, we get the following $ \epsilon $-independent elliptic estimate:
\begin{equation}\label{p0-est}
\|P_0^ \epsilon \|_{2, \Omega^ \epsilon } \le C \left[ \| \mathbf{u}_0 ^\epsilon \|_{3,\Omega^ \epsilon}+ \|\mathbf{u}_0^ \epsilon \|^2_{3,\Omega^ \epsilon }+\| \mathbf{H} _{0}^{\epsilon}\|_{3, \Omega ^{\epsilon}}^{2} \right]  \le C \,
\mathcal{P} ( m_0,m _{1}) ,
\end{equation}
where we use $\mathcal{P} $ to denote a generic polynomial function that depends only on $\Omega$.

\vspace{6mm}

\section{\emph{A priori} estimates} 
\label{sec-a_priori-esti}


\subsection{Reformulation of the problem in Lagrangian coordinates}  \label{sec::lagrangian}
Now let us consider the problem \eqref{MHDe}--\eqref{free-bd} supplemented with the initial data $ (\Omega ^{\epsilon},\mathbf{u}_{0}^{\epsilon},\mathbf{H}_{0}^{\epsilon}) $, and denote the corresponding solution by $ (\Omega ^{\epsilon}(t),\tilde{\mathbf{u}}^{\epsilon},\tilde{\mathbf{H}}^{\epsilon} ) $. We utilize the Lagrangian flow map to transform the problem on $\Omega ^{\epsilon}(t)$ to one on the initial domain $\Omega^{\epsilon}$. Let $\boldsymbol{\eta} ^{\epsilon}(\cdot,t): \Omega^ {\epsilon } \longrightarrow \Omega ^{\epsilon}(t)$ be flow map such that 
\begin{subequations}
\label{eta}
\begin{alignat}{3}
\p_{t} \boldsymbol{\eta}^{\epsilon}(x,t)&= \tilde{\mathbf{u}}^{\epsilon}( \boldsymbol{\eta}^{\epsilon}(\mathbf{x},t),t) , \ \  &\mathbf{x}& \in \Omega^{\epsilon} , t > 0,\\
\boldsymbol{\eta}^{\epsilon}(\mathbf{x},0)&=\mathbf{x},\ \  &\mathbf{x} &\in \Omega^{\epsilon}.
\end{alignat}
\end{subequations}
Since $ \mathop{\mathrm{div}}\nolimits \tilde{\mathbf{u}}^{\epsilon} =0$, we have $ \mathop{\mathrm{det}}\nabla \boldsymbol{\eta}^{\epsilon}=1 $. Then we know that $\boldsymbol{\eta}^{\epsilon}(\cdot, t)$ is a diffeomorphism from $\Omega^{\epsilon}$ to $\Omega ^{\epsilon}(t)$. Also, because of (\ref{MHDe}f), the boundary of $\Omega^{\epsilon}$ is transformed to the boundary of $\Omega^\epsilon (t)$, i.e. $$
\Gamma ^{\epsilon}(t):=\partial \Omega ^{\epsilon}(t) = \boldsymbol{\eta} ^{\epsilon} ( \Gamma^ \epsilon , t) \,.
$$
Next, we define
\begin{align*}
\mathbf{v}^{\epsilon} &= \tilde{\mathbf{u}}^{\epsilon} \circ \boldsymbol{\eta}^{\epsilon}   \text{ (Lagrangian velocity)},  \\
\mathbf{H}^{\epsilon}&=\tilde{\mathbf{H}}^{\epsilon}  \circ \boldsymbol{\eta}^{\epsilon},  \text{ (Lagrangian magnetic field)}\\
Q ^{\epsilon}&=P^\epsilon \circ \boldsymbol{\eta}^{\epsilon}   \text{ (Lagrangian pressure)}, \\
A ^{\epsilon} &= [ \nabla  \boldsymbol{\eta}^{\epsilon}]^{-1}  \text{ (inverse of the deformation tensor)}\,, \\
g_{ \alpha \beta }^{\epsilon} &=  \boldsymbol{\eta}^{\epsilon}, _ \alpha \cdot \boldsymbol{\eta}^{\epsilon},_\beta \ \ \alpha ,\beta =1,2 \text{ (induced metric on $\Gamma^\epsilon$)}, \\
\mathfrak{g}^{\epsilon}  & = \det( g_{ \alpha \beta }^{\epsilon}) .
\end{align*}
The Lagrangian analogue of some of the fundamental differential operators can also be defined. Precisely, let $F$ be a vector function in $\mathbb{R}^3$, then
\begin{align*}
\operatorname{div} _{\boldsymbol{\eta}^{\epsilon}} F &=  (\operatorname{div} (F \circ (\boldsymbol{\eta}^{\epsilon})^{-1})) \circ \boldsymbol{\eta} = F^i,_j (A ^{\epsilon})^j_i  , \\
\operatorname{curl} _{\boldsymbol{\eta}^{\epsilon}} F &=  (\operatorname{curl} (F \circ (\boldsymbol{\eta}^{\epsilon})^{-1})) \circ \boldsymbol{\eta} \text{ or }  [ \operatorname{curl} _ {\boldsymbol{\eta}^{\epsilon}} F]_i = \varepsilon_{ i jk}  F^k,_r (A ^{\epsilon})^r_j ,  \\
\operatorname{Def} _{\boldsymbol{\eta}^{\epsilon}} F &=  (\operatorname{Def} (F \circ (\boldsymbol{\eta}^{\epsilon})^{-1})) \circ \boldsymbol{\eta}^{\epsilon}  \text{ or } [\operatorname{Def} _ {\boldsymbol{\eta}^{\epsilon}} F]^i_j =  F^i,_r(A ^{\epsilon})^r_j + F^j,_r(A ^{\epsilon})^r_i , \\
\Delta _{\boldsymbol{\eta}^{\epsilon}} F &=  (\Delta (F \circ (\boldsymbol{\eta}^{\epsilon})^{-1})) \circ \boldsymbol{\eta}^{\epsilon} = ( (A ^{\epsilon})^j_r (A ^{\epsilon})^k_r F,_k),_j .
\end{align*}
Hereafter, we drop the superscript $ \epsilon $ of $ (\boldsymbol{\eta}^{\epsilon},\mathbf{v}^{\epsilon},\mathbf{H}^{\epsilon},Q ^{\epsilon},\mathbf{u}_{0}^{\epsilon},\mathbf{H}_{0}^{\epsilon},A ^{\epsilon}) $ for simplicity. Then the Lagrangian version of the system \eqref{MHDe} is given on
the initial domain $\Omega^ \epsilon $ by
\begin{subequations}
\label{mhd-lagran}
\begin{alignat}{2}
\boldsymbol{\eta}( \cdot ,t) & = \mathbf{e} + \int_0^t \mathbf{v}(\cdot ,s) \mathrm{d}s  \ && \text{ in } \Omega^ \epsilon  \times [0,T] ,  \\
 (\partial _{t}\mathbf{v} + A^T \nabla Q   - \nu \Delta _ {\boldsymbol{\eta}} \mathbf{v} )^i &=  \frac{1}{4 \pi} H ^{i}A _{i}^{j}\mathbf{H}, _{j}\ \ && \text{ in } \Omega^ \epsilon  \times (0,T] , \label{v-eq} \\
 \displaystyle \mathbf{H} _{t}  &=H ^{i}A _{i}^{k}\mathbf{v}, _{k}  \ \ && \text{ in } \Omega^\epsilon  \times [0,T] ,\label{B-eq}\\
    \displaystyle\operatorname{div} _{\boldsymbol{\eta}} \mathbf{v} &=0 \ \ && \text{ in } \Omega^\epsilon  \times [0,T] ,  \label{mhd-lagran-c}\\
   \displaystyle\operatorname{div} _{\boldsymbol{\eta}} \mathbf{H} &=0 \ \ && \text{ in } \Omega^\epsilon  \times [0,T] ,  \label{mhd-lagran-B}\\
\nu \operatorname{Def} _ {\boldsymbol{\eta}} \mathbf{v} \cdot \mathbf{n}^{\epsilon} - Q\mathbf{n}^{\epsilon} &=0,\ \ \mathbf{H} \cdot \mathbf{n}^{\epsilon}=0  && \text{ on } \Gamma^ \epsilon  \times [0,T] ,  \label{mhd-lagran-d}\\
(\boldsymbol{\eta} ,\mathbf{v},\mathbf{H})  &=(\mathbf{e},\mathbf{u}_0, \mathbf{H}_{0}) \ \  \ \ && \text{ in } \Omega^\epsilon  \times \{t=0\} ,
\end{alignat}
\end{subequations}
where $\mathbf{e}$ is defined by $\mathbf{e}(\mathbf{x})=\mathbf{x}$ for all $ \mathbf{x} \in \Omega ^{\epsilon}  $. As before, let $\mathbf{N}^ \epsilon$ denote the exterior normal vector to the boundary of $\Omega^ \epsilon $, then we have
$$
\mathbf{n} ^{\epsilon} =\frac{ (A ^{\epsilon})^T \mathbf{N}^{\epsilon}}{\vert  (A ^{\epsilon})^T \mathbf{N}^{\epsilon}\vert} .
$$
Hereafter, we will also drop the superscript $ \epsilon $ of $ \mathbf{n}^{\epsilon} $ for simplicity. The local-in-time existence and uniqueness of solutions to problem \eqref{mhd-lagran} can be obtained by the arguments as in \cite{Solonnikov-1977},  \cite{Solonnikov-1992} and \cite{Guo-Tice-2013-local} with slight modification since the magnetic field can be solved in terms of the Jacobian of the flow map and the initial data.  We shall show that both the \emph{a priori} estimates and
the time of existence for solutions are independent of the distance $ \epsilon  >0$ between the falling dinosaur head $\mathbf{X}_+^ \epsilon $ and the flat trough
$ \partial \omega_- \cap \{ x_3=0\}$ (see Figure \ref{Fig-dinau}(b)). To do so, we shall further reformulate the problem \eqref{mhd-lagran} with the boundary local charts $ \left\{ \theta _{l}^{\epsilon} \right\}_{l=1}^{K} $. With $\Omega^ \epsilon  $ and $\theta_l ^{\epsilon}$ defined in Section \ref{sec-initial_data}, we define $\boldsymbol{\eta}_l ^{\epsilon}(t): B^+ \to  \Omega(t) \ \text{ for } \ \ (l=1,...,K )$ by:
$$
\boldsymbol{\eta}_l ^{\epsilon} = \boldsymbol{\eta}^{\epsilon} \circ \theta_l ^{\epsilon}\ \text{ for } \ \ l=1,...,K \,.
$$
We set $\mathbf{v}_l ^{\epsilon} = \tilde{\mathbf{u}} \circ \boldsymbol{\eta}_l ^{\epsilon}$, $Q_l ^{\epsilon} = P ^{\epsilon} \circ \boldsymbol{\eta}_l ^{\epsilon}$, $ \mathbf{H}^\epsilon _{l}= \tilde{\mathbf{H}} \circ \boldsymbol{\eta}_{l}^{\epsilon} $ and $A_l ^{\epsilon}= [ \nabla \boldsymbol{\eta}_l ^{\epsilon} ] ^{-1} $, $J_l ^{\epsilon} = C_l$ (where $C_l>0$ is a constant, and $a_l ^{\epsilon} = J_l ^{\epsilon} A_l ^{\epsilon}$.
 The unit normal
$\mathbf{n}_l ^{\epsilon}$ is defined as $ \mathfrak{g}  ^ {-\frac{1}{2}} \frac{\p \boldsymbol{\eta}_l ^{\epsilon}}{\p y_1} \times \frac{\p \boldsymbol{\eta}_l ^{\epsilon}}{\p y_2} $. It then follows from \eqref{mhd-lagran} that for $l=1,...,K$~(after dropping the superscript $ \epsilon $),
\begin{subequations}
\label{localNS}
\begin{alignat}{2}
\boldsymbol{\eta}_l(\cdot, t) &= \theta_l + \int_0^t \mathbf{v}_l(\cdot,s) \mathrm{d}s\ \ && \text{ in } B^+ \times [0,T] , \label{localNS.a0} \\
\p_t  \mathbf{v}_l  + A_l^T \nabla Q_l   &=  \nu\Delta _ {\boldsymbol{\eta}_l} \mathbf{v}_l +  \frac{1}{4 \pi} \mathbf{H} _{l}\cdot \nabla _{\boldsymbol{\eta} _{l}} \mathbf{H} _{l} \ \ && \text{ in } B^+ \times (0,T] , \label{localMHD-a} \\
\displaystyle\partial _{t} \mathbf{H}_{l}&=\mathbf{H}_{l} ^{k}(A _{l}) _{k}^{\lambda}\mathbf{v}_{l} , _{\lambda}  \ \ && \text{ in } B ^{+}  \times [0,T] \,,\label{Bl-eq}\\
  \displaystyle\operatorname{div} _{\boldsymbol{\eta}_l} \mathbf{v}_l &=0 \ \ && \text{ in } B^+ \times [0,T] \,,\label{localNS.b}  \\
\operatorname{div} _{\boldsymbol{\eta}_l} \mathbf{H}_l &=0 \ \ && \text{ in } B^+ \times [0,T] ,\label{localMHD-B}  \\
\nu \operatorname{Def} _ {\boldsymbol{\eta}_l} \mathbf{v}_l \cdot \mathbf{n}_l - Q_l  \mathbf{n}_l &=0 \ \ && \text{ on } B^0 \times [0,T] ,\label{localNS.c}  \\
\displaystyle \mathbf{H} _{l}\cdot \mathbf{n}_{l}&=0 \ \ && \text{ on } B^0 \times [0,T] ,\label{localMHD-B-bdy}\\
(\boldsymbol{\eta}_l,\mathbf{v}_l,\mathbf{H} _{l})  &=(\theta_l,\mathbf{u}_0 \circ \theta_l, \mathbf{H} _{0}\circ \theta _{l} ) \ \  \ \ && \text{ in } B^+ \times \{t=0\} . \label{localNS.e}
\end{alignat}
\end{subequations}
\subsection{\emph{A priori} estimates} 
\label{sub:em}
In this section, we will establish some \emph{a priori} estimates for the solutions which enable us to show the existence of finite-time splash singularity. From now on, we assume that $ (\boldsymbol{\eta},\mathbf{v},\mathbf{H}, Q) $ is a smooth solution to the problem \eqref{mhd-lagran} on $ [0,T] $ for some $ T>0 $. For any $ t \in [0,T] $, define 
\begin{align*}
\mathcal{E}^\epsilon (t) & =  1+ \| \boldsymbol{\eta}( \cdot ,t)\|_{3, \Omega^ \epsilon  }^2 +  \| \mathbf{v} ( \cdot ,t)\|_{2, \Omega^ \epsilon  }^2 +\|\mathbf{H}\| _{2, \Omega ^{\epsilon}} ^{2}
+ \int_0^t \| \mathbf{v} ( \cdot ,s)\|_{3, \Omega^ \epsilon  }^2 \mathrm{d} s
+ \int_0^t \| Q( \cdot ,s)\|_{2, \Omega^ \epsilon  }^2 \mathrm{d}s  \\
& \qquad +   \| \mathbf{v}_t( \cdot ,t)\|_{0, \Omega^ \epsilon  }^2 +\|\mathbf{H} _{t}\| _{0,\Omega ^{\epsilon}}^{2} 
+ \int_0^t \| \mathbf{v}_s ( \cdot ,s)\|_{1, \Omega^ \epsilon  }^2 \mathrm{d}s,
\end{align*}
and set  $\mathcal{M}_0 = \mathcal{P} ( \mathcal{E}^ \epsilon (0))$, where $\mathcal{P} $ denotes a generic polynomial whose coefficients depend only on $\Omega$. Clearly, $ \mathcal{M}_{0} $ is a positive constant independent of $ \epsilon $. Here is a crucial proposition of this section.
\begin{proposition}\label{prop-key}
Assuming that $\Gamma(t)$ does not self-intersect, independent of $ \epsilon  >0$, there exists a time $T>0$ and a constant $C>0$ such that the smooth solution  $ (\boldsymbol{\eta},\mathbf{v},\mathbf{H}, Q) $ to the problem \eqref{mhd-lagran} on $ [0,T] $ satisfies the a priori estimate:
\begin{align}
 \max_{t\in [0,T]} \mathcal{E}^ \epsilon (t)    \le C\,  \mathcal{M}_0   .   \label{main-est}
\end{align}
\end{proposition}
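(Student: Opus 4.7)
The plan is to adapt the Coutand--Shkoller framework from \cite{Shkoller-2019-APAN}, the new feature being the two-way coupling between velocity and magnetic field through the Lorentz force in \eqref{localMHD-a} and the transport identity \eqref{Bl-eq}. All constants remain $\epsilon$-independent because of the uniform chart bound \eqref{bd-te-l} of Lemma \ref{lem-te-l-bd} together with the $\epsilon$-free estimates in Lemmas \ref{lem-Stoke}--\ref{lem-trace}; the matrices $A_l=[\nabla\boldsymbol{\eta}_l]^{-1}$ stay close to their initial values on a short interval $[0,T]$ because $\boldsymbol{\eta}_l=\theta_l^\epsilon+\int_0^t\mathbf{v}_l\, ds$, so most nonlinear errors are of the form $CT\,\mathcal{P}(\mathcal{E}^\epsilon)$ and hence absorbable for $T$ small.

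I would carry out three energy estimates in the boundary charts. First, testing \eqref{localMHD-a} with $\mathbf{v}_l$ and invoking \eqref{Bl-eq} together with the tangency $\mathbf{H}_l\cdot\mathbf{n}_l=0$ from \eqref{localMHD-B-bdy} produces, after symmetrizing the Lorentz term, the basic bound $\|\mathbf{v}\|_0^2+\|\mathbf{H}\|_0^2+\nu\int_0^t\|\nabla_{\boldsymbol{\eta}}\mathbf{v}\|_0^2\, ds\le\mathcal{M}_0+CT\mathcal{P}(\mathcal{E}^\epsilon)$. Second, differentiating \eqref{localMHD-a} in time and testing with $\mathbf{v}_t$ controls $\|\mathbf{v}_t(t)\|_0^2+\int_0^t\|\mathbf{v}_s\|_1^2\, ds$, the time derivatives falling on $A_l$ being harmless on a short interval. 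Third, applying $\bar\partial^2$ to \eqref{localMHD-a} and testing with $\bar\partial^2\mathbf{v}_l$ yields, for the viscous piece, $\nu\int_0^t\|\bar\partial^2\nabla\mathbf{v}_l\|_0^2\, ds$ modulo commutators; the pressure boundary term is absorbed via the Neumann-type condition \eqref{localNS.c}; and the Lorentz contribution is transformed, via \eqref{Bl-eq}, into a time derivative of $\|\bar\partial^2\mathbf{H}_l\|_0^2$ plus strictly lower-order error.

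To upgrade the tangential control into full interior regularity, I would read the momentum equation as a Stokes system on the fixed domain $\Omega^\epsilon$,
\begin{align*}
-\nu\Delta\mathbf{v}+\nabla Q &= -\mathbf{v}_t+\nu(\Delta_{\boldsymbol{\eta}}-\Delta)\mathbf{v}+(I-A^T)\nabla Q+\tfrac{1}{4\pi}\mathbf{H}\cdot\nabla_{\boldsymbol{\eta}}\mathbf{H},\\
\operatorname{div}\mathbf{v} &= (\operatorname{div}-\operatorname{div}_{\boldsymbol{\eta}})\mathbf{v},
\end{align*}
with boundary condition from \eqref{mhd-lagran-d}, and apply Lemma \ref{lem-Stoke} to bound $\|\mathbf{v}\|_3^2+\|Q\|_2^2$; the coefficient commutators on the right are of order $CT^{1/2}\mathcal{P}(\mathcal{E}^\epsilon)$. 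For the magnetic field, I would integrate \eqref{B-eq} in time and use the Banach algebra property of $H^2(\Omega^\epsilon)$ to obtain $\|\mathbf{H}(t)\|_2\le\|\mathbf{H}_0\|_2+C\int_0^t\mathcal{P}(\mathcal{E}^\epsilon)\, ds$, while \eqref{B-eq} itself directly controls $\|\mathbf{H}_t\|_0$. Finally $\|\boldsymbol{\eta}\|_3\le\|\mathbf{e}\|_3+\int_0^t\|\mathbf{v}\|_3\, ds$ closes the bound on the flow map.

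Summing all contributions gives $\mathcal{E}^\epsilon(t)\le\mathcal{M}_0+CT\mathcal{P}(\sup_{[0,t]}\mathcal{E}^\epsilon)+C\int_0^t\mathcal{P}(\mathcal{E}^\epsilon(s))\, ds$, which closes by choosing $T$ small and applying Gronwall. The main obstacle will be the Lorentz term in the horizontal estimate: the top-order contribution $\int\bar\partial^2(H^iA_i^kH^j\!,_k)\,\bar\partial^2v_l^j$ has no a priori sign and cannot be absorbed by Young's inequality against $\|\bar\partial^2\nabla\mathbf{v}\|_0^2$. One must integrate by parts in $y_k$, use $\mathbf{H}_l\cdot\mathbf{n}_l=0$ to kill the resulting boundary trace, and then substitute \eqref{Bl-eq} to recast the principal part as $\tfrac{1}{2}\partial_t\|\bar\partial^2\mathbf{H}_l\|_0^2$; keeping every remaining commutator at strictly lower order, so that it enters through $\mathcal{P}(\mathcal{E}^\epsilon)$ rather than through the top-level viscous dissipation, is the delicate step that distinguishes the MHD case from the pure Navier--Stokes analysis.
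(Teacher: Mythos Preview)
Your proposal is correct and follows essentially the same approach as the paper, including the key mechanism for the Lorentz term in the tangential estimate: the paper carries out exactly the cancellation you describe, pairing the $\bar\partial^2$ momentum identity with $\tfrac{1}{8\pi}\tfrac{d}{dt}\|\zeta\bar\partial^2\mathbf{H}_l\|_0^2$ coming from the $\bar\partial^2$ magnetic transport equation so that the top-order pieces $\int\zeta^2 H^iA_i^{\lambda}\bar\partial^2H^j\,\bar\partial^2 v^j{,}_\lambda$ cancel. Two minor points where the paper differs in execution: in the time-derivative step $\mathbf{v}_t$ is not $\operatorname{div}_{\boldsymbol{\eta}}$-free, so testing directly with $\mathbf{v}_t$ would leave an uncontrolled $Q_t$, and the paper (following \cite{Shkoller-2019-APAN}) instead tests against $\mathbf{v}_t-\mathbf{w}$ with a Stokes corrector $\mathbf{w}$ solving $\operatorname{div}_{\boldsymbol{\eta}}\mathbf{w}=-v^i{,}_j\partial_t A^j_i$; and the paper obtains $\|\mathbf{H}\|_{2,\Omega^\epsilon}$ by a direct $L^2$ energy identity on $\partial^2\mathbf{H}$ rather than your time-integration plus Banach-algebra route, though your route is equally valid and arguably shorter.
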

\begin{proof}
 The proof of Proposition \ref{prop-key} is made up of Lemmas \ref{7.1}--\ref{lem-verify}.
\end{proof}

To prove this proposition, we first make an \emph{a priori} assumption: for a constant $0<\vartheta \ll 1$, 
we suppose that there is a sufficiently small $T$ independent of $ \epsilon $ such that for any $t \in [0,T]$,
\begin{equation}\label{basic}
\sup_{t \in [0,T]} \| \nabla \boldsymbol{\eta} (\cdot , t)- \mathbb{I}_{3} \|_{ L^\infty(\Omega^ \epsilon )} \le \vartheta^{10}.
\end{equation}
As a direct consequence of \eqref{basic}, we have the following lemma.
\begin{lemma}\label{7.1}
Under the \emph{a priori} assumption \eqref{basic}, it holds for sufficiently small $ \vartheta $ that
\begin{equation}\label{est-A}
\sup_{t \in [0,T]} \|A ( \cdot , t) - \mathbb{I}_{3}\|_{L^\infty (\Omega^ \epsilon )} +   \|A A^T( \cdot , t) - \mathbb{I}_{3} \|_{L^\infty (\Omega^ \epsilon )}
\le  \vartheta.
\end{equation}
\end{lemma}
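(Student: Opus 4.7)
My plan is to treat this as a pointwise linear-algebra statement and read off both bounds from a Neumann series expansion of $A = (\nabla\boldsymbol{\eta})^{-1}$. Setting $E := \nabla\boldsymbol{\eta} - \mathbb{I}_3$, the assumption \eqref{basic} gives $\|E(\cdot,t)\|_{L^\infty(\Omega^\epsilon)} \leq \vartheta^{10}$ uniformly in $t \in [0,T]$, so for $\vartheta$ so small that $\vartheta^{10} \leq 1/2$ the matrix $\mathbb{I}_3 + E(x,t)$ is invertible at every $(x,t)$, with
\begin{equation*}
A(\cdot,t) = \sum_{k=0}^\infty (-E(\cdot,t))^k
\end{equation*}
converging absolutely in $L^\infty(\Omega^\epsilon)$. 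This is the one ingredient I need; the rest is bookkeeping.

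Summing the $k\ge 1$ tail immediately yields $\|A-\mathbb{I}_3\|_{L^\infty(\Omega^\epsilon)} \leq \vartheta^{10}/(1-\vartheta^{10}) \leq 2\vartheta^{10}$, which takes care of the first term in \eqref{est-A}. For the second term I would factor
\begin{equation*}
AA^T - \mathbb{I}_3 = (A - \mathbb{I}_3)A^T + (A^T - \mathbb{I}_3)
\end{equation*}
and combine $\|A^T\|_{L^\infty} \leq 1 + 2\vartheta^{10}$ with the previous step to obtain a bound of the form $C\vartheta^{10}$ with an absolute constant $C$. Adding the two contributions then gives the left-hand side of \eqref{est-A} bounded by a fixed multiple of $\vartheta^{10}$, which is trivially $\leq \vartheta$ for all sufficiently small $\vartheta$ (any $\vartheta$ below some absolute threshold $\vartheta_0$ works).

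There is really no serious obstacle here: the lemma is a pointwise matrix identity, and the only choice to make is the matrix norm. Using the operator norm, or any equivalent norm on $3\times 3$ matrices with its dimensional constant absorbed into the threshold for $\vartheta$, renders the Neumann argument immediate. The exponent $\vartheta^{10}$ in \eqref{basic} is clearly chosen to leave generous margin for the nonlinear estimates that follow in Lemmas \ref{7.1}--\ref{lem-verify}; the present lemma simply converts that margin into the first-order linear statement \eqref{est-A}, and one does not even need to invoke the incompressibility constraint $\det \nabla\boldsymbol{\eta} = 1$.
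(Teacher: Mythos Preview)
Your proposal is correct, and the treatment of $AA^T-\mathbb{I}_3$ via the factorization $(A-\mathbb{I}_3)A^T+(A^T-\mathbb{I}_3)$ is essentially the same as the paper's. The only genuine difference is in the first step: you bound $\|A-\mathbb{I}_3\|_{L^\infty}$ by a Neumann series for $(\mathbb{I}_3+E)^{-1}$, which needs no structural hypothesis beyond $\|E\|_{L^\infty}\le\vartheta^{10}<1$, whereas the paper invokes the incompressibility $\det\nabla\boldsymbol{\eta}=1$ to write $A$ explicitly as the cofactor matrix (each row a cross product $\boldsymbol{\eta},_i\times\boldsymbol{\eta},_j$), so that every entry of $A$ is a quadratic polynomial in the entries of $\nabla\boldsymbol{\eta}$ and the bound $\|A-\mathbb{I}_3\|_{L^\infty}\le C\vartheta^{10}$ follows by direct substitution. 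Your route is arguably cleaner and more general for this isolated lemma; the paper's cofactor identity, however, is not chosen merely for this step but is reused throughout the later energy estimates to control $\bar\partial A$, $\bar\partial^2 A$, $\nabla^2 A$, and $\partial_t A$ pointwise in terms of derivatives of $\boldsymbol{\eta}$ and $\mathbf{v}$, which the Neumann series does less transparently.
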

\begin{proof}
 From \eqref{basic}, we get  
 \begin{align}\label{na-eta-bd}
 \displaystyle \|\nabla \boldsymbol{\eta}\|_{L ^{\infty}(\Omega ^{\epsilon})} \leq C  
 \end{align}
 for some constant $ C>0 $ independent of $ \epsilon $, and 
 \begin{align}\label{na-eta-elements}
 \displaystyle \|\eta ^{i},_{j}- \delta ^{ij}\| _{L ^{\infty}(\Omega ^{\epsilon})} \leq \vartheta ^{10}.
 \end{align}
 Notice that 
 \begin{align}\label{A-express}
 \displaystyle  A=\left[\begin{matrix} {\boldsymbol{\eta},_2 \times \boldsymbol{\eta},_3}\\ {\boldsymbol{\eta},_3 \times \boldsymbol{\eta},_1}\\ {\boldsymbol{\eta},_1 \times \boldsymbol{\eta},_2} \end{matrix} \right]
 \end{align}
 due to $ \mathop{\mathrm{det}}\nabla \boldsymbol{\eta}=1 $. Then we get by virtue of \eqref{na-eta-bd} and \eqref{na-eta-elements} that 
 \begin{align}\label{A-bd}
  \displaystyle \|A- \mathbb{I}_{3}\|_{L ^{\infty}(\Omega ^{\epsilon})}=\|A ^{T}- \mathbb{I}_{3}\| _{L ^{\infty}(\Omega ^{\epsilon})}\leq C \vartheta ^{10},   
  \end{align}
   where $ C>0 $ is a constant independent of $ \epsilon $ and $ \vartheta $. This along with the fact 
   \begin{align*}
   \displaystyle  AA ^{T}- \mathbb{I}_{3}=A (A ^{T}-\mathbb{I}_{3})+(A- \mathbb{I}_{3})\mathbb{I}_{3} 
   \end{align*}
   further implies that 
   \begin{align}\label{A-AT-bd}
   \displaystyle  \|AA ^{T}- \mathbb{I}_{3}\| _{L ^{\infty}(\Omega ^{\epsilon})}\leq C \vartheta ^{10}.
   \end{align}
Combining \eqref{A-bd} and \eqref{A-AT-bd}, and setting $ \vartheta $ suitably small, we then get \eqref{est-A}, and thus finish the proof of the present lemma. 
 \end{proof}

In the following lemma, we shall derive the boundary regularity of the velocity field. To be clear, throughout this section, we denote by $ \|\cdot\|_{X,\Omega ^{\epsilon}} $ the $ X $-norm of the function defined on the initial domain, and denote by $ \|\cdot\|_{Y, B ^{+}} $ the $ Y $-norm of the function defined on $ B ^{+} $. In particular, $ \|\mathbf{G}\|_{Y,B ^{+}} $ should be regarded as $ \|\mathbf{G}_{l}\|_{Y,B ^{+}} $ for any vector-valued function $ \mathbf{G} _{l} $ defined in $ B ^{+} $.
\begin{lemma}\label{lem-bdy-regul}
For any positive number $\delta $, there exists a constant $C_{\delta}$ independent of $\epsilon$,   such that
\begin{equation}\label{cs6}
 \int_0^T   \|   \mathbf{v}(\cdot , t)  \|^2_{2.5, \Gamma^ \epsilon } \mathrm{d}t \leq 
\mathcal{M}_0 +  C_ \delta  T P( \sup_{t \in [0,T]} \mathcal{E}^ \epsilon (t)) + \delta   \sup_{t \in [0,T]} \mathcal{E}^ \epsilon (t)
\end{equation}
\end{lemma}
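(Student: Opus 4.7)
The plan is to reduce the boundary estimate to a chart-wise $H^1(B^+)$ bound on $\bar\partial^2 \mathbf{v}_l$, and then to produce that bound by a tangential-derivative energy identity for the localized momentum equation. First, using the cutoffs $\xi_l^\epsilon$ from Lemma \ref{lem-te-l-bd} together with a trace theorem on $B^+$, and the chart estimate \eqref{bd-te-l} which makes constants $\epsilon$-independent, one has
\begin{align*}
\|\mathbf{v}(\cdot,t)\|_{2.5,\Gamma^\epsilon}^2 \le C \sum_{l=1}^{K} \|\bar\partial^2 \mathbf{v}_l(\cdot,t)\|_{0.5,B^0}^2 \le C \sum_{l=1}^{K} \|\bar\partial^2 \mathbf{v}_l(\cdot,t)\|_{1,B^+}^2.
\end{align*}
The $L^2(B^+)$ part is directly controlled by $\mathcal{E}^\epsilon$, so the whole matter reduces to estimating $\int_0^T \|\nabla \bar\partial^2 \mathbf{v}_l\|_{0,B^+}^2\, dt$ for each boundary chart $l=1,\dots,K$.

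To produce this quantity I would apply $\bar\partial^2$ to \eqref{localMHD-a}, multiply by $\bar\partial^2\mathbf{v}_l$, and integrate over $B^+\times[0,t]$. Integration by parts in $y$ on the viscous term produces the principal coercive piece $\nu\int_0^t\!\!\int_{B^+} (A_l)^j_r(A_l)^k_r\,\bar\partial^2 \mathbf{v}_l,_k\cdot\bar\partial^2 \mathbf{v}_l,_j$, which is bounded below by $\tfrac{\nu}{2}\int_0^t\|\nabla\bar\partial^2\mathbf{v}_l\|_{0,B^+}^2\,ds$ using Lemma \ref{7.1} once $\vartheta$ is small. The time-derivative term supplies $\tfrac12\|\bar\partial^2\mathbf{v}_l(t)\|_{0,B^+}^2 - \tfrac12\|\bar\partial^2(\mathbf{u}_0^\epsilon\circ\theta_l^\epsilon)\|_{0,B^+}^2$, contributing $\mathcal{M}_0$ on the right through \eqref{bd-te-l} and the construction of $\mathbf{u}_0^\epsilon$. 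Commutators in which $\bar\partial$ falls on $A_l$ are controlled by $\|\boldsymbol{\eta}\|_{3,\Omega^\epsilon}^2\le\mathcal{E}^\epsilon$, Lemma \ref{7.1}, and Lemmas \ref{lem-Sobolev}--\ref{lem-trace}; after integration in time they appear as $T\,\mathcal{P}(\sup_{[0,T]}\mathcal{E}^\epsilon)$ or, when the borderline norm $\|\mathbf{v}\|_{3,\Omega^\epsilon}$ intervenes, they split by Young's inequality into $C_\delta T\,\mathcal{P}(\sup\mathcal{E}^\epsilon)+\delta\sup\mathcal{E}^\epsilon$. The Lorentz force contribution $\frac{1}{4\pi}\int_0^t\!\!\int_{B^+}\bar\partial^2(\mathbf{H}_l\cdot\nabla_{\boldsymbol{\eta}_l}\mathbf{H}_l)\cdot\bar\partial^2\mathbf{v}_l$ is handled using $\|\mathbf{H}\|_{2,\Omega^\epsilon}^2\le\mathcal{E}^\epsilon$, Sobolev embedding (Lemma \ref{lem-Sobolev}), and Young's inequality, producing a further $T\,\mathcal{P}(\sup\mathcal{E}^\epsilon)$ piece.

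The pressure contribution is the main obstacle. After integrating by parts in $y$ the interior term $\int \bar\partial^2(A_l^T\nabla Q_l)\cdot\bar\partial^2\mathbf{v}_l$ is killed to leading order by \eqref{localNS.b}, but a boundary integral on $B^0$ survives in which the trace of $\bar\partial^2 Q_l$ is formally one order higher than what the energy provides. The resolution, in the spirit of \cite{Shkoller-2019-APAN}, is to substitute the dynamic boundary condition \eqref{localNS.c} and replace $Q_l\mathbf{n}_l$ by $\nu\operatorname{Def}_{\boldsymbol{\eta}_l}\mathbf{v}_l\cdot\mathbf{n}_l$; the resulting top-order piece pairs as a duality between $\bar\partial^2\nabla\mathbf{v}_l$ on $B^0$ and $\bar\partial^2\mathbf{v}_l$, which by trace and interpolation is absorbed into $\delta\sup_{[0,T]}\mathcal{E}^\epsilon$ via Young's inequality with small weight $\delta$. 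A parallel argument controls the analogous boundary piece from $\bar\partial^2(\mathbf{H}_l\otimes\mathbf{H}_l)\cdot\mathbf{n}_l$, which is already zero to leading order since $\mathbf{H}_l\cdot\mathbf{n}_l=0$ on $B^0$ by \eqref{localMHD-B-bdy}. Summing the chart estimates over $l=1,\dots,K$ and combining with the reduction above yields \eqref{cs6}.
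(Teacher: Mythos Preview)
Your overall plan---localize to boundary charts, apply $\bar\partial^2$, test against $\bar\partial^2\mathbf{v}_l$, and extract $\int_0^T\|\nabla\bar\partial^2\mathbf{v}_l\|_{0,B^+}^2$---is the same as the paper's. Two of your steps, however, do not close.

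First, the pressure/viscous boundary terms. You integrate by parts in the viscous term and keep only the interior coercive piece, then integrate by parts in the pressure term and are left with a boundary integral on $B^0$ which you propose to control by substituting \eqref{localNS.c} and absorbing the resulting $\bar\partial^2\nabla\mathbf{v}_l|_{B^0}$ by ``trace and interpolation''. This does not work: the trace of $\bar\partial^2\nabla\mathbf{v}_l$ on $B^0$ would cost $\|\mathbf{v}\|_{3.5,\Omega^\epsilon}$, which is not in $\mathcal{E}^\epsilon$. The point you are missing is that the integration by parts in the viscous term \emph{also} produces a boundary integral on $B^0$; the paper keeps both boundary contributions and observes that together they are precisely $\int_{B^0}\zeta^2\bar\partial^2[\nu\operatorname{Def}_{\boldsymbol\eta_l}\mathbf{v}_l\cdot\mathbf{n}_l - Q_l\mathbf{n}_l]\cdot\bar\partial^2\mathbf{v}_l$, which vanishes by \eqref{localNS.c}. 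There is no residual boundary term to absorb.

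Second, the Lorentz force. As written, $\bar\partial^2(\mathbf{H}_l\cdot\nabla_{\boldsymbol\eta_l}\mathbf{H}_l)$ contains $H^i(A_l)^\lambda_i\,\bar\partial^2\partial_\lambda H^j$, a third derivative of $\mathbf H$ that $\|\mathbf H\|_{2,\Omega^\epsilon}$ does not control; Sobolev embedding and Young's inequality alone are not enough. The paper's device is to test $\bar\partial^2$ of the magnetic equation \eqref{Bl-eq} against $\frac{1}{4\pi}\bar\partial^2\mathbf{H}_l$ and add it to the velocity identity; after an integration by parts in $y_\lambda$ (using the Piola identity, $\operatorname{div}_{\boldsymbol\eta_l}\mathbf{H}_l=0$ and $\mathbf{H}_l\cdot\mathbf{n}_l=0$, so no boundary term survives), the top-order contributions $\mp\frac{1}{4\pi}\int\zeta^2 H^i(A_l)^\lambda_i\,\bar\partial^2 H^j\,\bar\partial^2 v^j{,}_\lambda$ from the two identities cancel exactly. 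An alternative that also closes is to perform only the spatial integration by parts in the Lorentz term and then absorb the remaining $\|\zeta\bar\partial^2\nabla\mathbf{v}_l\|_{0,B^+}$ into the viscous coercive piece by Young's inequality; but you would need to spell out that integration by parts and the use of \eqref{localMHD-B} and \eqref{localMHD-B-bdy}, since without it the required bound is false.
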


\begin{proof}
Recall that $ \xi _{l} $ is the cut-off function associated to $ \theta _{l}(B ^{+}) $ for $ l=1,\cdots,K $. And recall the definition of $ \zeta _{l}^{\epsilon}\,(\mbox{drop the superscript } \epsilon \mbox{ later}) $ in Lemma \ref{lem-te-l-bd}. By equation \eqref{localMHD-a}, we get
\begin{equation}\label{cs0}
\int_{B^+} ( \zeta_l \bar \p^2 [ \p_t  \mathbf{v}_l  + A_l^T \nabla Q_l   - \nu\Delta _ {\boldsymbol{\eta}_l} \mathbf{v}_l +  \frac{1}{4 \pi} \mathbf{H} _{l}\cdot \nabla _{\boldsymbol{\eta} _{l}} \mathbf{H} _{l}] \cdot  \zeta_l \bar  \partial^2 \mathbf{v}_l \mathrm{d}\mathbf{y}  =0 \,.
\end{equation}
We fix $l \in \{1,...,K\}$ and drop the subscript to simplify the notation. Then it follows that
\begin{align}\label{split-bdy-esti}
&\int_{B^+} \zeta ^2 \bar \partial^2 v_t^i \,  \bar \partial^2 v^i \, \mathrm{d} \mathbf{y} - \int_{B^+}  \zeta ^2 \bar \partial^2 [ A^k_s A^j_s v ^i,_j],_k \, \bar \partial^2 v^i \, \mathrm{d} \mathbf{y} + \int_{B^+} \zeta ^2 \bar \partial^2
[A^k_i Q],_k \, \bar \partial^2 v^i \, \mathrm{d}\mathbf{y}
 \nonumber \\
 &~\displaystyle =  \frac{1}{4 \pi} \int _{B ^{+}}\zeta ^{2} H^{i}(A _{l}) _{i}^{s}H^{j},_{s} \bar \partial^2 v ^{j} \mathrm{d}\mathbf{y},
\end{align}
where we have used the Piola identity, i.e., $ (J _{l} (A _{l})_{i}^{k}),_{k}=0 $ and the fact $ J _{l}=\tilde{C} _{l} $ for some constant $ \tilde{C}_{l} $. By using integration by parts and the boundary condition (\ref{localNS}d), we get from \eqref{split-bdy-esti} that 
\begin{align}
 &{\frac{1}{2}} \frac{\mathrm{d}}{\mathrm{d}t} \| \zeta \bar \p^2 \mathbf{v}(\cdot , t)\|^2_{0,B^+}
+  \| \zeta \bar\p^2  \nabla \mathbf{v}(\cdot, t)  \|^2_{0,B^+}
 \nonumber \\ 
 & ~\displaystyle 
= -   \int_{B^+}  \bar \p^2 [A^k_i  Q]\, [ \zeta ^2 \bar \partial^2v^i],_k \mathrm{d}\mathbf{y} \nonumber -  \int_{B^+}  \bar \p^2 [ (A^k_{\lambda} A^j_{\lambda} -\delta^{kj}) v^i,_j]\, [ \zeta ^2 \bar \partial^2v^i,_k] \mathrm{d}\mathbf{y}
 \nonumber \\ 
 & \displaystyle ~ \quad
-2 \int _{B ^{+}}  \bar{\partial}^{2}(A^k_{\lambda} A^j_{\lambda} v ^{i}, _{j}) \zeta \zeta, _{k}\bar \partial^2v^i\mathrm{d}\mathbf{y}
 +  \frac{1}{4 \pi} \int _{B ^{+}}\zeta ^{2}\bar{\partial}^{2} [H^{i}A  _{i}^{\lambda}H^{j} ,_{\lambda}] \bar \partial^2 v ^{j} \mathrm{d}\mathbf{y} .  \label{cs2}
\end{align}
Notice that
\begin{align*}
\displaystyle \int_{B^+}  \zeta \bar \p^2 [ \partial _{t}\mathbf{H}-H ^{k}A  _{k}^{\lambda}\mathbf{v} , _{\lambda}]\cdot \ \frac{1}{4 \pi}\zeta\bar  \partial^2 \mathbf{H} \mathrm{d}\mathbf{y} =0 .
\end{align*}
Then we have
\begin{align}\label{H-iden-bdy}
\displaystyle  \frac{1}{8 \pi}\frac{\mathrm{d}}{\mathrm{d}t}\|\zeta \bar{\partial}^{2} \mathbf{H} \|_{0, B ^{+}}^{2}= \frac{1}{4 \pi}\int _{B ^{+}}\zeta ^{2}\bar{\partial}^{2}[ H^{k}A  _{k}^{\lambda}v  ^{i}, _{\lambda}] \bar{\partial}^{2} H^{i}\mathrm{d}\mathbf{y}.
\end{align}
Integrating \eqref{cs2} and \eqref{H-iden-bdy} over the time interval $[0,t]\subset [0,T]$, we get 
\begin{align}\label{cs7}
 &\frac{1}{2}  \| \zeta \bar \p^2 \mathbf{v}(\cdot , t)\|^2_{0,B^+} +\frac{1}{8 \pi}\|\zeta \bar{\partial}^{2} \mathbf{H} \|_{0, B ^{+}}^{2}+ \int_0^t   \| \zeta \bar{\partial}^{2}  \nabla \mathbf{v}(\cdot,s)  \|^2_{0,B^+}\mathrm{d}s 
  \nonumber \\ 
  &~\displaystyle \leq  M_0 
 +  \mathcal{R} _1 
 + \mathcal{R} _2  +  \mathcal{R} _3  +  \mathcal{R}_{4}  + \mathcal{R}_{5} , 
\end{align}
where $ \mathcal{R}_{i}\,(i=1,2,\cdots,5) $ are given by 
\begin{align*}
\mathcal{R} _1 & =  \int_0^t \int_{B^+} \left \vert  \bar \p^2 [A^k_i  Q]\, [ \zeta ^2 \bar\p^2v^i],_k\right\vert \mathrm{d}\mathbf{y} \mathrm{d}s , \\
\mathcal{R} _2 & =\int_0^t  \int_{B^+}\left \vert \bar \p^2 [ (A^k_{\lambda} A^j_{\lambda} - \delta^{kj}) v^i,_j]\, [ \zeta ^2 \bar \partial^2v^i],_k \right\vert \mathrm{d}\mathbf{y} \mathrm{d}s, \\
\mathcal{R} _3 & =-2 \int_0^t\int _{B ^{+}}  \bar{\partial}^{2}(A^k_{\lambda} A^j_{\lambda} v ^{i}, _{j}) \zeta \zeta, _{k}\bar \partial^2v^i\mathrm{d}\mathbf{y} \mathrm{d}s
 ,\\
 \displaystyle \mathcal{R}_{4}&= \frac{1}{4 \pi} \int_0^t\int _{B ^{+}}\zeta ^{2}\bar{\partial}^{2} [H^{i}A _{i}^{\lambda} H ^{j},_{\lambda}] \bar \partial^2 v ^{j} \mathrm{d}\mathbf{y}\mathrm{d}s, \\
 \displaystyle \mathcal{R}_{5}&=\frac{1}{4 \pi}\int_0^t\int _{B ^{+}}\zeta ^{2}\bar{\partial}^{2}[ H ^{k}A  _{k}^{\lambda}v  ^{i}, _{\lambda}] \bar{\partial}^{2} H ^{i}\mathrm{d}\mathbf{y} \mathrm{d}s.
\end{align*}
Using \eqref{bd-te-l}, \eqref{na-eta-bd}, the Sobolev embedding theorem and Lemma \ref{lem-Sobolev}, we estimate $ \mathcal{R} _1$ as 
\begin{align}\label{cal-R-1}
\mathcal{R}_1 & \leq C\int _{0}^{T}\int _{B ^{+}}\left( \vert  A _{i}^{k}\vert \vert \bar{\partial}^{2}Q\vert+\vert \bar{\partial}^{2}A _{i}^{k}\vert \vert Q\vert+ \vert \bar{\partial}A _{i}^{k}\vert \vert \bar{\partial}Q\vert\right)\left( 2 \vert \zeta\vert \vert  \zeta, _{k}\vert \vert \bar{\partial}^{2}v ^{i}\vert+ \zeta ^{2}\vert \bar{\partial}^{2}v ^{i}, _{k}\vert\right)\mathrm{d}\mathbf{y}\mathrm{d}t 
 \nonumber \\ 
 & \displaystyle \leq \underbrace{C\int _{0}^{T}\int _{B ^{+}}\left( \vert \bar{\partial}^{2}Q\vert+\vert \bar{\partial}^{2}A\vert \vert Q\vert +\vert \bar{\partial}A\vert \vert \bar{\partial}Q\vert\right) \vert \bar{\partial}^{2}v ^{i}, _{k}\vert \mathrm{d}\mathbf{y}\mathrm{d}t }_{\mathcal{R}_{1}^{a}}
  \nonumber \\ 
  & \displaystyle \quad +C \int _{0}^{T}\|A\| _{L ^{\infty}(B ^{+})}\|\bar{\partial}^{2}Q\|_{0,B ^{+}}\|\mathbf{v}\|_{2,B ^{+}}\mathrm{d}t + C \int _{0}^{T}\|Q\|_{L ^{\infty}(B ^{+})}\|\bar{\partial}^{2}A\|_{0,B ^{+}}\|\mathbf{v}\|_{2,B ^{+}}\mathrm{d}t \nonumber \\ 
  & \displaystyle \quad+ \int _{0}^{T}\|\bar{\partial}A\|_{L ^{4}(B ^{+})}\|\bar{\partial}Q\|_{L ^{4}(B ^{+})}\|\mathbf{v}\|_{2, B ^{+}}\mathrm{d}t 
\nonumber \\ 
 & \displaystyle \leq \mathcal{R}_{1}^{a}+\underbrace{
  \int_0^T  \| Q\|_{2, \Omega ^{\epsilon} }   \| A\|_{2, \Omega^ \epsilon} \| \mathbf{v}\|_{2, \Omega ^ \epsilon } \mathrm{d} t }_{\mathcal{R}_{1}^{b}},
\end{align}
where, thanks to the Cauchy-Schwarz inequality, we have for $ \mathcal{R}_{1}^{b} $ that 
\begin{align}
\displaystyle \mathcal{R}_{1}^{b} \leq  \delta \int _{0}^{T}\|Q\|_{2,\Omega ^{\epsilon}}^{2}\mathrm{d}t +C _{\delta}T \mathcal{P}(\sup _{t \in [0,T]}\mathcal{E}(t))
\end{align}
for any $ \delta>0 $, where $ C _{\delta}>0 $ is a constant depending on $ \delta $ (which blows-up as $ \delta \to 0$). To estimate the integral $ \mathcal{R} _1^a$, we use \eqref{localNS.b} to get
$$
v^i,_{ k \alpha \beta } A^k_i = - A^k_i,_{ \alpha  \beta } v^i,_k - A^k_i,_ \beta v^i,_{k \alpha } - A^k_i,_ \alpha  v^i,_{k \beta},
$$
so that the term with third derivatives on $\mathbf{v}$ is converted to a term with third derivatives on $\eta$ plus lower-order terms. Here repeated Greek indices $\alpha, \beta$ are summed from $1$ to $2$. Therefore it follows that for $ \delta >0$, 
\begin{align}
\displaystyle  \mathcal{R} _1^a &\leq C \int _{0}^{T}\int _{\Omega ^{\epsilon}}(\vert Q\vert+\nabla Q\vert+\vert \nabla ^{2}Q\vert)(\vert \nabla ^{2}\boldsymbol{\eta}\vert ^{2} +1) (\vert \nabla \mathbf{v}\vert+\vert \nabla ^{2}\mathbf{v}\vert)  \mathrm{d}\mathbf{x}\mathrm{d}t  
 \nonumber \\ 
 & \displaystyle \quad + C \int _{0}^{T}\int _{\Omega ^{\epsilon}}(\vert Q\vert+\nabla Q\vert+\vert \nabla ^{2}Q\vert)(\vert \nabla ^{2}\boldsymbol{\eta}\vert  +1) \vert \nabla ^{3}\boldsymbol{\eta}\vert \vert \nabla\mathbf{v}\vert\mathrm{d}\mathbf{x}\mathrm{d}t  
 \nonumber \\ 
 & \displaystyle \quad+ C\int _{0}^{T}\int _{\Omega ^{\epsilon}} \vert \nabla ^{3} \boldsymbol{\eta}\vert \vert Q\vert \Big[(\vert \nabla ^{2}\boldsymbol{\eta} \vert  +1)(\vert \nabla \mathbf{v}\vert+ \vert \nabla ^{2}\mathbf{v}\vert)+\vert \nabla ^{3}\boldsymbol{\eta}\vert \vert \nabla \mathbf{v}\vert \Big]\mathrm{d}\mathbf{x}\mathrm{d}t  
  \nonumber \\ 
    & \displaystyle \leq  C\int _{0}^{T}(\|Q\| _{2,\Omega ^{\epsilon}}\| \boldsymbol{\eta}\|_{3, \Omega ^{\epsilon}}\|\mathbf{v}\|_{2, \Omega ^{\epsilon}}+ \|Q\|_{2,\Omega ^{\epsilon}}\|\nabla ^{2}\boldsymbol{\eta}\|_{1,\Omega ^{\epsilon}}^{2}\|\mathbf{v}\|_{3,\Omega ^{\epsilon}}+\|\boldsymbol{\eta}\|_{3,\Omega ^{\epsilon}} ^{2}\|Q\|_{2,\Omega ^{\epsilon}}\|\mathbf{v}\|_{2.75,\Omega ^{\epsilon}})\mathrm{d}t  
     \nonumber \\ 
     & \displaystyle\leq  C 
\delta   \int_0^T \| Q\|^2_{2, \Omega ^ \epsilon } \mathrm{d} t + C_ \delta  T \mathcal{P}( \sup_{t \in [0,T]} \mathcal{E}^ \epsilon (t)),
\end{align}
where we have used \eqref{bd-te-l}, the Sobolev embedding theorem and the fact 
\begin{align}\label{eta-tim-esti}
\displaystyle \|\nabla ^{2}\boldsymbol{\eta}\|_{1,\Omega ^{\epsilon}} \leq C\int _{0}^{T}\|\mathbf{v}\|_{3}\mathrm{d}t \leq C T ^{\frac{1}{2}}\left( \int _{0}^{T}\| \mathbf{v}\|_{3}^{2}\mathrm{d}t  \right)^{\frac{1}{2}} 
\end{align}
due to $ \boldsymbol{\eta}(\mathbf{x},0)=\mathbf{x} $ for any $ \mathbf{x} \in \Omega ^{\epsilon} $. Inserting the estimates of $ \mathcal{R}_{1}^{a} $ and $ \mathcal{R}_{1}^{b} $ into \eqref{cal-R-1}, we thus have 
\begin{align}
\displaystyle  \mathcal{R} _1^a \leq C_ \delta  T P( \sup_{t \in [0,T]} \mathcal{E}^ \epsilon (t)) + \delta   \sup_{t \in [0,T]} \mathcal{E}^ \epsilon (t)
\end{align}
Next, for the integral $ \mathcal{R} _2$, it holds that 
\begin{align}\label{CAL-R-2}
\mathcal{R} _2 & \le  \underbrace{\int_0^T  \int_{B^+} \left \vert    (A^k_{\lambda} A^j_{\lambda} - \delta^{kj}) \bar \p^2 v^i,_j\, [ \zeta ^2 \bar \partial^2v^i],_k \right \vert \mathrm{d}\mathbf{y} \mathrm{d}t}_{ \mathcal{R} _2^a}
+  \underbrace{\int_0^T  \int_{B^+} \left \vert  \bar \p^2  (A^k_{\lambda} A^j_{\lambda} - \delta^{kj})  v^i,_j\, [ \zeta ^2 \bar \partial^2v^i],_k \right \vert \mathrm{d}\mathbf{y} \mathrm{d}t }_{ \mathcal{R} _2^b} 
 \nonumber \\ 
 & \qquad + \underbrace{2 \int_0^T  \int_{B^+} \left \vert  \bar \p  (A^k_{\lambda} A^j_{\lambda} - \delta^{kj}) \bar \p v^i,_j\, [ \zeta ^2 \bar \partial^2v^i],_k \right \vert \mathrm{d}\mathbf{y} \mathrm{d}t}_{ \mathcal{R} _2^c}.
\end{align}
We now estimate the terms on the right hand side of \eqref{CAL-R-2}. Using (\ref{est-A}) and choosing $\vartheta < \delta $, we get 
\begin{align}
\displaystyle  \mathcal{R} _2^a &\leq C\int _{0}^{T}\|A A ^{T}- \mathbb{I}_{3}\|_{L ^{\infty}(\Omega ^{\epsilon})} \left( \|\mathbf{v}\|_{3, \Omega ^{\epsilon}}^{2}+ \|\mathbf{v}\|_{3, \Omega ^{\epsilon}}\|\mathbf{v}\|_{2, \Omega ^{\epsilon}} \right) \mathrm{d}\tau 
 \nonumber \\ 
 & \displaystyle \leq C_ \delta  T P( \sup_{t \in [0,T]} \mathcal{E}^ \epsilon (t)) + \delta   \sup_{t \in [0,T]} \mathcal{E}^ \epsilon (t).
\end{align}
For $ \mathcal{R}_{2}^{b} $ and $ \mathcal{R}_{2}^{c} $, by \eqref{bd-te-l}, \eqref{A-express}, \eqref{na-eta-bd} and \eqref{eta-tim-esti}, we get for any $ \delta>0 $,
\begin{align*}
\displaystyle \mathcal{R}_{2}^{b}& \leq C\int _{0}^{T}\int _{B^{+}}\left( \vert \bar{\partial}A\vert ^{2}+\vert A\vert \vert \bar{\partial}^{2}A\vert \right)\vert \nabla \mathbf{v}\vert (\vert \nabla \bar{\partial}^{2}\mathbf{v}\vert+\vert \bar{\partial}^{2}\mathbf{v}\vert)  \mathrm{d}\mathbf{y}\mathrm{d}t 
 \nonumber \\ 
 & \displaystyle \leq C\int _{0}^{T} \int _{\Omega ^{\epsilon}} ( \vert \nabla \boldsymbol{\eta}\vert+ \vert \nabla ^{2}\boldsymbol{\eta}\vert ^{2}+\vert \nabla ^{3}\boldsymbol{\eta}\vert) \vert \nabla \mathbf{v}\vert (\vert \nabla \mathbf{v}\vert + \vert \nabla ^{2}\mathbf{v}\vert+\vert \nabla ^{3}\mathbf{v}\vert)\mathrm{d}\mathbf{x}\mathrm{d}t 
  \nonumber \\ 
     &\leq C\int _{0}^{T}  (\| \nabla ^{2}\boldsymbol{\eta}\|_{1,\Omega ^{\epsilon}} ^{2}+\| \nabla ^{2}\boldsymbol{\eta}\|_{1,\Omega ^{\epsilon}})\|\mathbf{v}\|_{3,\Omega ^{\epsilon}}^{2}\mathrm{d}t \leq C_ \delta  T P( \sup_{t \in [0,T]} \mathcal{E}^ \epsilon (t)) + \delta   \sup_{t \in [0,T]} \mathcal{E}^ \epsilon (t)
\end{align*}
 and  
\begin{align*}
\displaystyle \mathcal{R}_{2}^{c} & \leq C\int _{0}^{T} \int _{B ^{+}}\vert A\vert \vert \bar{\partial}A\vert \vert \nabla \mathbf{v}\vert  (\vert \nabla \bar{\partial}^{2}\mathbf{v}\vert+\vert \bar{\partial}^{2}\mathbf{v}\vert)  \mathrm{d}\mathbf{y}\mathrm{d}t 
 \nonumber \\ 
 & \displaystyle \leq C\int _{0}^{T}(\|\nabla ^{2}\boldsymbol{\eta}\|_{L ^{4}(\Omega ^{\epsilon})}\|\nabla \mathbf{v}\|_{L ^{4}(\Omega ^{\epsilon})}\|\mathbf{v}\|_{3, \Omega ^{\epsilon}}+\|\mathbf{v}\|_{2,\Omega ^{\epsilon}}\|\mathbf{v}\|_{3,\Omega ^{\epsilon}} )\mathrm{d}t 
  \nonumber \\ 
  & \displaystyle \leq  C_ \delta  T P( \sup_{t \in [0,T]} \mathcal{E}^ \epsilon (t)) + \delta   \sup_{t \in [0,T]} \mathcal{E}^ \epsilon (t).
\end{align*}
Therefore we have from \eqref{CAL-R-2} that 
\begin{align}
\displaystyle  \mathcal{R} _2 \leq C_ \delta  T P( \sup_{t \in [0,T]} \mathcal{E}^ \epsilon (t)) + \delta   \sup_{t \in [0,T]} \mathcal{E}^ \epsilon (t)
\end{align}
for any $ \delta>0 $. The integral  $ \mathcal{R} _3$ is straightforward and satisfies
\begin{align}\label{cs5}
\mathcal{R} _3 & \leq C\int_0^T\int _{B ^{+}}  \bar{\partial}^{2}(A^k_{\lambda} A^j_{\lambda} v ^{i}, _{j}) \zeta \zeta, _{k}\bar \partial^2v^i\mathrm{d}\mathbf{y} \mathrm{d}t
 \nonumber \\
 & \displaystyle \leq C \int _{0}^{T} \int _{B^{+}}\vert \bar{\partial}^{2}A\vert \vert \nabla \mathbf{v}\vert \vert \bar{\partial}^{2}\mathbf{v}\vert \mathrm{d}\mathbf{y} \mathrm{d}t +C \int _{0}^{T}\int _{B^{+}}\vert \bar{\partial}A\vert \vert \nabla  ^{2}\mathbf{v}\vert ^{2}\mathrm{d}\mathbf{y} \mathrm{d}t
  \nonumber \\
  & \displaystyle \quad+ C \int _{0}^{T}\int _{B^{+}}\vert \nabla ^{3}\mathbf{v}\vert \vert \nabla ^{2}\mathbf{v}\vert \mathrm{d}\mathbf{y} \mathrm{d}t
   \nonumber \\
    &\le    C_ \delta  T P( \sup_{t \in [0,T]} \mathcal{E}^ \epsilon (t)) + C\delta   \sup_{t \in [0,T]} \mathcal{E}^ \epsilon (t) 
\end{align}
for any $ \delta>0 $. For $ \mathcal{R}^{4} $, we utilize \eqref{bd-te-l}, \eqref{localMHD-B}, \eqref{localMHD-B-bdy},  \eqref{na-eta-bd}, \eqref{A-express} and the Sobolev embedding theorem to derive that  
\begin{align}
\displaystyle \mathcal{R}_{4}&= \frac{1}{4 \pi} \int_0^t\int _{B ^{+}}\zeta ^{2}\bar{\partial}^{2} [H^{i}(A ) _{i}^{\lambda}H^{j} ,_{\lambda}] \bar \partial^2 v ^{j} \mathrm{d}\mathbf{y}\mathrm{d}s
 \nonumber \\
 & \displaystyle =- \frac{1}{4 \pi} \int _{0}^{t} \int _{B ^{+}}\bar{\partial}^{2}[H^{i}A _{i}^{\lambda}H ^{j}](2 \zeta \zeta, _{\lambda}\bar{\partial}^{2}v ^{j}+\zeta ^{2}\bar{\partial}^{2}v ^{j}, _{\lambda})\mathrm{d}\mathbf{y} \mathrm{d}s
  \nonumber \\
  & \displaystyle \quad  \leq- \frac{1}{4 \pi} \int _{0}^{t}\int _{B ^{+}}\zeta ^{2} H ^{i}A _{i}^{\lambda}\bar{\partial}^{2}H  ^{j} \bar{\partial}^{2}v ^{j}, _{\lambda}\mathrm{d}\mathbf{y} \mathrm{d}s+C \int _{0}^{T} \int _{B ^{+}} \vert \bar{\partial}\mathbf{H}\vert ^{2}\left( \vert \bar{\partial}^{2} \mathbf{v}\vert+\vert \nabla ^{3}\mathbf{v}\vert \right)\mathrm{d}\mathbf{y} \mathrm{d}t
   \nonumber \\
   & \displaystyle \quad \quad +\int _{0}^{T} \int _{B ^{+}} \vert \bar{\partial}A\vert \vert \mathbf{H}\vert \vert \bar{\partial}\mathbf{H}\vert\left( \vert \bar{\partial}^{2} \mathbf{v}\vert+\vert \nabla ^{3}\mathbf{v}\vert \right)\mathrm{d}\mathbf{y} \mathrm{d}t
    \nonumber \\
    & \displaystyle \leq  - \frac{1}{4 \pi} \int _{0}^{t}\int _{B ^{+}}\zeta ^{2} H ^{i}A _{i}^{\lambda}\bar{\partial}^{2}H  ^{j} \bar{\partial}^{2}v ^{j}, _{\lambda}\mathrm{d}\mathbf{y} \mathrm{d}s +
    \int _{0}^{T} \left( \|\nabla \mathbf{H}\|_{0,\Omega ^{\epsilon}}+ \|\boldsymbol{\eta}\|_{2, \Omega ^{\epsilon}}\|\mathbf{H}\|_{2,\Omega ^{\epsilon}}^{2} \right)\|\mathbf{v}\|_{3,\Omega ^{\epsilon}}\mathrm{d}t 
     \nonumber \\ 
     & \displaystyle \leq   - \frac{1}{4 \pi} \int _{0}^{t}\int _{B ^{+}}\zeta ^{2} H ^{i}A _{i}^{\lambda}\bar{\partial}^{2}H  ^{j} \bar{\partial}^{2}v ^{j}, _{\lambda}\mathrm{d}\mathbf{y} \mathrm{d}s+\mathcal{M}_0 +  C_ \delta  T P( \sup_{t \in [0,T]} \mathcal{E}^ \epsilon (t)) + C\delta   \sup_{t \in [0,T]} \mathcal{E}^ \epsilon (t) 
\end{align}
for any $ \delta>0 $. Similarly, we have for $ \mathcal{R}_{5} $ that
\begin{align*}
\displaystyle  \mathcal{R}_{5}&= \frac{1}{4 \pi}\int_0^t\int _{B ^{+}}\zeta ^{2}\bar{\partial}^{2}[ H ^{k}A _{k}^{\lambda}v  ^{i}, _{\lambda}] \bar{\partial}^{2} H ^{i}\mathrm{d}\mathbf{y} \mathrm{d}s
 \nonumber \\
 & \displaystyle \leq  \frac{1}{4 \pi}\int_0^t\int _{B ^{+}}\zeta ^{2} H ^{k}A _{k}^{\lambda}\bar{\partial}^{2}v ^{i}, _{\lambda} \bar{\partial}^{2} H ^{i}\mathrm{d}\mathbf{y} \mathrm{d}s +C  \int _{0}^{T}\int _{B ^{+}}\vert \nabla ^{2} \mathbf{H}\vert ^{2}\vert \nabla \mathbf{v}\vert \mathrm{d}\mathbf{y} \mathrm{d}s
  \nonumber \\
  & \displaystyle \quad +C \int _{0}^{T} \int _{B ^{+}}\left( \vert \bar{\partial}^{2}A\vert  \vert \mathbf{H}\vert \vert \nabla \mathbf{v}\vert+\vert \nabla \mathbf{H}\vert \vert \nabla ^{2}\mathbf{v}\vert+\vert \nabla \mathbf{H}\vert \vert \bar{\partial}A\vert \vert \nabla \mathbf{v}\vert\right)\vert \nabla ^{2}\mathbf{H}\vert \mathrm{d}\mathbf{y} \mathrm{d}t
   \nonumber \\
   & \displaystyle \leq \frac{1}{4 \pi}\int_0^T\int _{B ^{+}}\zeta ^{2} H ^{k}A  _{k}^{\lambda}\bar{\partial}^{2}v  ^{i}, _{\lambda} \bar{\partial}^{2} H ^{i}\mathrm{d}\mathbf{y} \mathrm{d}t+ C
   \int _{0}^{T} \|\mathbf{H}\|_{2,\Omega ^{\epsilon}}^{2}\|\mathbf{v}\|_{3,\Omega ^{\epsilon}} \mathrm{d}t 
    \nonumber \\ 
    & \displaystyle \quad+C \int _{0}^{T}\left( \|\boldsymbol{\eta}\| _{3,\Omega ^{\epsilon}}+\|\boldsymbol{\eta}\|_{3,\Omega ^{\epsilon}}^{2}+1 \right) \|\mathbf{H}\|_{2,\Omega ^{\epsilon}}^{2}\|\mathbf{v}\|_{3,\Omega ^{\epsilon}} \mathrm{d}t 
    + C \int _{0}^{T}\|\mathbf{H}\|_{2,\Omega ^{\epsilon}} ^{2} \|\boldsymbol{\eta}\| _{3,\Omega ^{\epsilon}} \|\mathbf{v}\| _{3,\Omega ^{\epsilon}}\mathrm{d}t   \nonumber \\ 
    & \displaystyle \leq \frac{1}{4 \pi}\int_0^T\int _{B ^{+}}\zeta ^{2} H ^{k}A  _{k}^{\lambda}\bar{\partial}^{2}v  ^{i}, _{\lambda} \bar{\partial}^{2} H^{i}\mathrm{d}\mathbf{y} \mathrm{d}t + \mathcal{M}_0 +  C_ \delta  T P( \sup_{t \in [0,T]} \mathcal{E}^ \epsilon (t)) + C\delta   \sup_{t \in [0,T]} \mathcal{E}^ \epsilon (t).
\end{align*}
Summing over all of the  boundary charts $l=1,...,K$ in (\ref{cs7}), using the estimates for $ \mathcal{R} _{i}\,(i=1,\cdots,5) $ together with the trace theorem, Lemma \ref{lem-trace}, we get for any $ \delta>0 $,
\begin{equation*}
 \int_0^T   \|   \mathbf{v}(\cdot , t)  \|^2_{2.5, \Gamma^ \epsilon } \mathrm{d}t\le
\mathcal{M}_0 +  C_ \delta  T P( \sup_{t \in [0,T]} \mathcal{E}^ \epsilon (t)) + \delta   \sup_{t \in [0,T]} \mathcal{E}^ \epsilon (t).
\end{equation*}
This ends the proof of Lemma \ref{lem-bdy-regul}.
\end{proof}
Next we shall establish some estimates for the magnetic field $ \mathbf{H} $.

\begin{lemma}\label{lem-H}
There exists a polynomial function $P$ and a $C_\delta >0$ independent of $\epsilon$ such that
\begin{equation}
\sup_{t \in [0,T]}   \| \mathbf{H} ( \cdot ,t) \|^2_{2, \Omega^\epsilon } \le \mathcal{M}_0 +  C_ \delta  T P( \sup_{t \in [0,T]} \mathcal{E}^ \epsilon (t)) + \delta   \sup_{t \in [0,T]} \mathcal{E}^ \epsilon (t)
\end{equation}
for any positive number $\delta$.
\end{lemma}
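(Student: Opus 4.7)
The strategy is to exploit the fact that equation \eqref{B-eq}, in Lagrangian coordinates, has the form $\mathbf{H}_t = H^k A_k^\lambda \mathbf{v},_\lambda$, which contains no spatial derivative of $\mathbf{H}$ on the left. Thus at each fixed material point $\mathbf{x}\in\Omega^\epsilon$, $\mathbf{H}$ evolves like an ODE driven by $\nabla_{\boldsymbol{\eta}}\mathbf{v}$, and derivatives of this identity yield energy estimates directly, without needing any boundary integration by parts on $\mathbf{H}$. I would therefore apply $\partial^a$ for $|a|=0,1,2$ to \eqref{B-eq}, multiply by $\partial^a\mathbf{H}$, integrate over $\Omega^\epsilon$, and close by Gronwall.

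The first step is to write, for $|a|\leq 2$,
\begin{equation*}
\partial^a\mathbf{H}_t \;=\; H^k A_k^\lambda \,\partial^a\mathbf{v},_\lambda \;+\; \sum_{0\leq |b|\leq |a|-1}\!\!\!\!\! C_{a,b}\, \partial^{a-b}(H^k A_k^\lambda)\,\partial^b \mathbf{v},_\lambda,
\end{equation*}
then take the $L^2(\Omega^\epsilon)$ inner product with $\partial^a\mathbf{H}$. Summing over $|a|\leq 2$ yields
\begin{equation*}
\tfrac{1}{2}\tfrac{d}{dt}\|\mathbf{H}\|_{2,\Omega^\epsilon}^2 \;\leq\; \underbrace{\int_{\Omega^\epsilon} \partial^2 \mathbf{H}\cdot H^k A_k^\lambda \,\partial^2\mathbf{v},_\lambda \, d\mathbf{x}}_{\mathcal{I}_{\mathrm{top}}} \;+\; \mathcal{I}_{\mathrm{low}},
\end{equation*}
where $\mathcal{I}_{\mathrm{low}}$ collects terms involving at most $\nabla^2\mathbf{v}$ paired with products of derivatives of $\mathbf{H}$ and $A$. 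Using $\|A\|_{L^\infty}\leq C$ from Lemma~\ref{7.1}, Sobolev embedding $H^2(\Omega^\epsilon)\hookrightarrow L^\infty(\Omega^\epsilon)$ from Lemma~\ref{lem-Sobolev}, the estimate $\|\nabla^2\boldsymbol{\eta}\|_{1,\Omega^\epsilon}\leq C T^{1/2}(\int_0^T\|\mathbf{v}\|_3^2 ds)^{1/2}$ already used in \eqref{eta-tim-esti}, and the Hölder-type splits of the lower-order products, one bounds
\begin{equation*}
\mathcal{I}_{\mathrm{low}} \;\leq\; C\, \mathcal{P}(\|\boldsymbol{\eta}\|_{3,\Omega^\epsilon},\|\mathbf{H}\|_{2,\Omega^\epsilon})\,\|\mathbf{v}\|_{2,\Omega^\epsilon}
\;+\; C\,\|\mathbf{H}\|_{2,\Omega^\epsilon}^2\,\|\mathbf{v}\|_{3,\Omega^\epsilon},
\end{equation*}
whose time integrals are controlled by $C_\delta T \mathcal{P}(\sup\mathcal{E}^\epsilon) + \delta\sup\mathcal{E}^\epsilon$ after Cauchy--Schwarz in $t$ and Young's inequality, because $\|\mathbf{v}\|_{3,\Omega^\epsilon}$ is only in $L^2_t$, not $L^\infty_t$.

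The main obstacle is the top-order term $\mathcal{I}_{\mathrm{top}}$, since it contains $\nabla^3\mathbf{v}$, which is only controlled in $L^2_t L^2_x$ by $\mathcal{E}^\epsilon$. Unlike the velocity estimate of Lemma~\ref{lem-bdy-regul}, I cannot integrate by parts in the spatial variable to move a derivative off $\mathbf{v}$ without generating a boundary term on $\Gamma^\epsilon$ that I cannot control (there is no natural boundary condition for $\partial^2 \mathbf{H}$). The remedy is to keep $\mathcal{I}_{\mathrm{top}}$ as it is, estimate it crudely by
\begin{equation*}
|\mathcal{I}_{\mathrm{top}}| \;\leq\; \|\mathbf{H}\|_{L^\infty(\Omega^\epsilon)}\|A\|_{L^\infty(\Omega^\epsilon)}\|\nabla^2 \mathbf{H}\|_{0,\Omega^\epsilon}\|\nabla^3\mathbf{v}\|_{0,\Omega^\epsilon} \;\leq\; C\,\|\mathbf{H}\|_{2,\Omega^\epsilon}^2\,\|\mathbf{v}\|_{3,\Omega^\epsilon},
\end{equation*}
integrate in time, and use Cauchy--Schwarz together with Young's inequality:
\begin{equation*}
\int_0^t \|\mathbf{H}\|_{2,\Omega^\epsilon}^2\|\mathbf{v}\|_{3,\Omega^\epsilon}\,ds \;\leq\; T^{1/2}\,\bigl(\sup_{[0,T]}\mathcal{E}^\epsilon\bigr)\Bigl(\int_0^T\|\mathbf{v}\|_{3,\Omega^\epsilon}^2\,ds\Bigr)^{1/2} \;\leq\; \delta\sup_{[0,T]}\mathcal{E}^\epsilon + C_\delta T\,\mathcal{P}(\sup_{[0,T]}\mathcal{E}^\epsilon).
\end{equation*}

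Finally, integrating the Gronwall inequality from $0$ to $t\in[0,T]$ and adding the initial bound $\|\mathbf{H}_0\|_{2,\Omega^\epsilon}^2\leq \mathcal{M}_0$ produced in Section~\ref{sec:the_initial_magnetic_field} yields the claimed estimate; then taking the supremum in $t$ completes the proof. The $\epsilon$-independence is guaranteed throughout because the constants in Lemmas \ref{lem-te-l-bd}--\ref{lem-trace} and in the Sobolev embedding on $\Omega^\epsilon$ are all uniform in $\epsilon$.
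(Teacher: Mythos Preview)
Your proposal is correct and follows essentially the same approach as the paper: apply $\partial^a$ for $|a|\le 2$ to the transport equation \eqref{B-eq}, test against $\partial^a\mathbf{H}$, expand by Leibniz, and bound each piece via Sobolev embedding and H\"older, treating the top-order term $\int\partial^2\mathbf{H}\cdot H^kA_k^\lambda\,\partial^2\mathbf{v},_\lambda$ directly by $\|\mathbf{H}\|_{L^\infty}\|\nabla^2\mathbf{H}\|_0\|\nabla^3\mathbf{v}\|_0$ and then Cauchy--Schwarz/Young in time. The paper simply writes out the six Leibniz pieces $\mathcal{U}_1,\dots,\mathcal{U}_6$ explicitly rather than grouping them into $\mathcal{I}_{\mathrm{top}}$ and $\mathcal{I}_{\mathrm{low}}$, but the analysis is the same.
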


\begin{proof}
We will show only the second-order estimate since the lower-order estimates can be derived similarly. Applying $ \partial ^{2} $ to the equation \eqref{mhd-lagran-c}, and testing the resulting equation against $ \partial ^{2}\mathbf{H} $, we get 
\begin{equation}\label{h1}
\int_{ \Omega^\epsilon }\p^2 \mathbf{H} _{t}  \cdot \p^2 \mathbf{H} \mathrm{d}\mathbf{x}=\int_{ \Omega^\epsilon } \p^2 (H ^{i}A _{i}^{k}\mathbf{v}, _{k}) \cdot \p^2 \mathbf{H} \mathrm{d}\mathbf{x}.
\end{equation}
Integrating \eqref{h1} in time, we get
\begin{align}\label{pa-2-H-iden}
&\frac{1}{2}  \| \nabla ^{2} \mathbf{H} (t) \|^2_{0, \Omega^\epsilon }=  \frac{1}{2} \| \nabla^2 \mathbf{H}_0 \|^2_{0, \Omega^\epsilon } + \int_0^t  \int _{\Omega ^{\epsilon}}  \partial^2 (H ^{i}A _{i}^{k}\mathbf{v}, _{k}) \cdot \partial^2 \mathbf{H} \mathrm{d}\mathbf{x}\mathrm{d}s\\
 & =\frac{1}{2} \| \nabla^2 \mathbf{H}_0 \|^2_{0, \Omega^\epsilon } +  \underbrace{\int_0^t  \int _{\Omega ^{\epsilon}}  \partial^2 H ^{i}A _{i}^{k}\mathbf{v}, _{k} \cdot \partial^2 \mathbf{H} \mathrm{d}\mathbf{x}\mathrm{d}s }_{ \mathcal{U} _1} 
 +  \underbrace{\int_0^t  \int _{\Omega ^{\epsilon}}    H ^{i} \partial^2 A _{i}^{k}\mathbf{v}, _{k} \cdot \partial^2 \mathbf{H} \mathrm{d}\mathbf{x}\mathrm{d}s}_{ \mathcal{U} _2} 
  \nonumber \\ 
  & \displaystyle \quad+  \underbrace{\int_0^t  \int _{\Omega ^{\epsilon}}   H ^{i}A _{i}^{k} \partial^2 \mathbf{v}, _{k} \cdot \partial^2 \mathbf{H} \mathrm{d}\mathbf{x}\mathrm{d}s}_{ \mathcal{U} _3}+ 2 \underbrace{\int_0^t  \int _{\Omega ^{\epsilon}} \partial H ^{i} \partial A _{i}^{k}\mathbf{v}, _{k} \cdot\partial^2 \mathbf{H} \mathrm{d}\mathbf{x}\mathrm{d}s}_{ \mathcal{U} _4} 
  \nonumber \\ 
  & \displaystyle \quad
   +2  \underbrace{\int_0^t  \int _{\Omega ^{\epsilon}}  \partial H ^{i}A _{i}^{k} \partial \mathbf{v}, _{k} \cdot \partial^2 \mathbf{H}\mathrm{d}\mathbf{x}\mathrm{d}s} _{ \mathcal{U} _5}+ 2  \underbrace{\int_0^t  \int _{\Omega ^{\epsilon}}   H ^{i} \partial A _{i}^{k} \partial \mathbf{v}, _{k} \cdot \partial^2 \mathbf{H} \mathrm{d}\mathbf{x}\mathrm{d}s}_{ \mathcal{U} _6}.
\end{align}
We now estimate $ \mathcal{U}_{i}\,(i=1,\cdots,6) $ term by term. By \eqref{bd-te-l}, \eqref{na-eta-bd}, \eqref{A-express} and the Sobolev embedding theorem, we get 
\begin{align}
\displaystyle \mathcal{U}_{1} &\leq C\int _{0}^{t}\|\nabla^{2}\mathbf{H}\|_{0,\Omega ^{\epsilon}}^{2}\|\nabla \mathbf{v}\|_{L ^{\infty}(\Omega ^{\epsilon})}\mathrm{d}s  \leq \delta \int _{0}^{t}\|\mathbf{v}\|_{3, \Omega ^{\epsilon}}^{2}\mathrm{d}s +C _{\delta} \int _{0}^{t}\| \nabla ^{2} \mathbf{H}\|_{0,\Omega ^{\epsilon}}^{4}\mathrm{d}s 
 \nonumber \\ 
 & \displaystyle  \leq C_ \delta  T P( \sup_{t \in [0,T]} \mathcal{E}^ \epsilon (t)) + \delta   \sup_{t \in [0,T]} \mathcal{E}^ \epsilon (t).
\end{align}
Similarly, we have 
\begin{align*}
\displaystyle \mathcal{U}_{3}& \leq C\int _{0}^{t} \|\mathbf{H}\|_{L ^{\infty}(\Omega ^{\epsilon})} \|\mathbf{v}\|_{0,\Omega^\epsilon} \|\mathbf{H}\|_{2, \Omega^\epsilon} \mathrm{d}s \leq C \int _{0}^{t}\|\mathbf{H}\|_{2,\Omega ^{\epsilon}}^{2}\|\mathbf{v}\|_{3, \Omega ^{\epsilon}}\mathrm{d}s \nonumber \\ 
 & \leq C_ \delta  T P( \sup_{t \in [0,T]} \mathcal{E}^ \epsilon (t)) + \delta   \sup_{t \in [0,T]} \mathcal{E}^ \epsilon (t)
\end{align*}
and  
\begin{align*}
\displaystyle \mathcal{U}_{5}& \leq C\int _{0}^{t} \| \nabla \mathbf{H}\|_{L ^{4}(\Omega ^{\epsilon})}\|\nabla^{2}\mathbf{v}\|_{L ^{4}(\Omega ^{\epsilon})} \| \nabla^{2}\mathbf{H}\|_{0,\Omega ^{\epsilon}}\mathrm{d}s \leq C \int _{0}^{t}\|\mathbf{H}\|_{2,\Omega ^{\epsilon}}^{2}\|\mathbf{v}\|_{3, \Omega ^{\epsilon}}\mathrm{d}s \nonumber \\ 
 & \leq C_ \delta  T P( \sup_{t \in [0,T]} \mathcal{E}^ \epsilon (t)) + \delta   \sup_{t \in [0,T]} \mathcal{E}^ \epsilon (t).
\end{align*}
For $ \mathcal{U}_{2} $, from \eqref{na-eta-bd} and \eqref{A-express}, we get 
\begin{align*}
\displaystyle \vert \nabla ^{2}A\vert \leq C\vert \nabla ^{3}\boldsymbol{\eta}\vert +C \vert \nabla ^{2}\boldsymbol{\eta}\vert ^{2}.
\end{align*}
This along with \eqref{bd-te-l}, the Cauchy-Schwarz inequality and the Sobolev embedding theorem implies that 
\begin{align}\label{esti-CAL-U2}
 \displaystyle \mathcal{U}_{2}& \leq C\int _{0}^{t} \int _{\Omega ^{\epsilon}}\vert \mathbf{H}\vert  \left(\vert \nabla ^{3}\boldsymbol{\eta}\vert + \vert \nabla ^{2}\boldsymbol{\eta}\vert ^{2}  \right) \vert \nabla \mathbf{v}\vert \vert \nabla ^{2}\mathbf{H}\vert \mathrm{d}\mathbf{x}\mathrm{d}s 
  \nonumber \\ 
  & \displaystyle \leq C\int _{0}^{t}\|\mathbf{H}\|_{L ^{\infty}(\Omega ^{\epsilon})} (\|\boldsymbol{\eta}\|_{3, \Omega ^{\epsilon}}+\|\boldsymbol{\eta}\|_{3, \Omega ^{\epsilon}}^{2}) \| \nabla \mathbf{v}\|_{L ^{\infty}(\Omega ^{\epsilon})}\|\partial ^{2}\mathbf{H}\|_{0, \Omega ^{\epsilon} }\mathrm{d}s 
   \nonumber \\ 
   & \displaystyle \leq C\int _{0}^{t}\|\mathbf{H}\|_{2, \Omega ^{\epsilon}}^{2}(\|\boldsymbol{\eta}\|_{3, \Omega ^{\epsilon}}+\|\boldsymbol{\eta}\|_{3, \Omega ^{\epsilon}}^{2})\|\mathbf{v}\|_{3,\Omega ^{\epsilon}}\mathrm{d}s 
    \nonumber \\ 
    & \displaystyle \leq C_ \delta  T P( \sup_{t \in [0,T]} \mathcal{E}^ \epsilon (t)) + \delta   \sup_{t \in [0,T]} \mathcal{E}^ \epsilon (t).
 \end{align}
 By similar arguments as in \eqref{esti-CAL-U2}, we have for $ \mathcal{U}_{4} $ and $ \mathcal{U}_{6} $ that 
 \begin{align}
 \displaystyle \mathcal{U}_{4}& \leq   C\int _{0}^{t} \int _{\Omega ^{\epsilon}}\vert \nabla\mathbf{H}\vert \vert \partial ^{2}\boldsymbol{\eta}\vert  \vert \nabla \mathbf{v}\vert \vert \nabla ^{2}\mathbf{H}\vert \mathrm{d}\mathbf{x}\mathrm{d}s 
  \nonumber \\ 
  & \displaystyle \leq C\int _{0}^{t} \| \nabla \mathbf{H}\|_{L ^{4}(\Omega ^{\epsilon})}\| \nabla ^{2}\boldsymbol{\eta}\|_{L ^{4}(\Omega ^{\epsilon})} \| \nabla \mathbf{v}\|_{L ^{\infty}(\Omega ^{\epsilon})} \| \nabla ^{2}\mathbf{H}\|_{0,\Omega ^{\epsilon}} \mathrm{d}s 
   \nonumber \\ 
   & \displaystyle \leq C\int _{0}^{t} \|\mathbf{H}\|_{2, \Omega ^{\epsilon}}^{2} \|\boldsymbol{\eta}\|_{3,\Omega ^{\epsilon}} \|\mathbf{v}\| _{3, \Omega ^{\epsilon}} \mathrm{d}s 
    \nonumber \\ 
     & \leq C_ \delta  T P( \sup_{t \in [0,T]} \mathcal{E}^ \epsilon (t)) + \delta   \sup_{t \in [0,T]} \mathcal{E}^ \epsilon (t)
 \end{align}
 and 
 \begin{align}
 \displaystyle  \mathcal{U}_{6}& \leq   C\int _{0}^{t} \int _{\Omega^\epsilon}\vert \mathbf{H}\vert \vert \nabla ^{2}\boldsymbol{\eta}\vert  \vert \nabla ^{2} \mathbf{v}\vert \vert \nabla ^{2}\mathbf{H}\vert \mathrm{d}\mathbf{x}\mathrm{d}s 
  \nonumber \\ 
 \displaystyle & \leq C\int _{0}^{t}\| \mathbf{H}\|_{L ^{\infty}(\Omega ^{\epsilon})}  \|\nabla ^{2} \mathbf{v}\|_{L ^{4}(\Omega ^{\epsilon})}\| \nabla ^{2}\boldsymbol{\eta}\|_{L ^{4}(\Omega ^{\epsilon})} \|\nabla ^{2}\mathbf{H}\|_{0,\Omega} \mathrm{d}s 
   \nonumber \\ 
   & \displaystyle \leq C\int _{0}^{t} \|\mathbf{H}\|_{2,\Omega ^{\epsilon}}^{2}\|\mathbf{v}\|_{3, \Omega ^{\epsilon}}\|\boldsymbol{\eta}\|_{3, \Omega ^{\epsilon}}\mathrm{d}s 
    \nonumber \\ 
    & \leq C_ \delta  T P( \sup_{t \in [0,T]} \mathcal{E}^ \epsilon (t)) + \delta   \sup_{t \in [0,T]} \mathcal{E}^ \epsilon (t).
 \end{align}
 Inserting the estimates of $ \mathcal{U}_{i}\,(1,\cdots,6) $ into \eqref{pa-2-H-iden}, we then get for any $ t \in [0,T] $,
 \begin{align}
 \displaystyle  \| \nabla ^{2}\mathbf{H}(\cdot,t)\| _{0,\Omega ^{\epsilon}} \leq \mathcal{M}_{0} +C_ \delta  T P( \sup_{t \in [0,T]} \mathcal{E}^ \epsilon (t)) + \delta   \sup_{t \in [0,T]} \mathcal{E}^ \epsilon (t).
 \end{align}
Following the above procedure, we can derive the estimates for the lower-order derivatives of the magnetic field, and ultimately complete the proof of the present lemma.
\end{proof}

In the following, we shall derive the estimates for the time-derivative of $ (\mathbf{u},\mathbf{H}) $. To do so, we differentiate \eqref{localNS} with respect to the time variable, and obtain the following equations:
\begin{subequations}
\label{NSlagt}
\begin{alignat}{2}
\partial _{t}\boldsymbol{\eta}  & = \mathbf{v}  \ && \text{ in } \Omega^ \epsilon  \times [0,T],  \label{eta-eq}\\
 \partial _{t}^{2} \mathbf{v}   - \Delta _ {\boldsymbol{\eta}} \partial _{t}\mathbf{v} + A^T \nabla Q_t   &= -  A^T _{t} \nabla Q  + [ \partial_t( A^j_s A^k_s) \mathbf{v},_k],_j+\frac{1}{\mu _{0}}\partial _{t}\left[  H ^{i}A _{i}^{l}\mathbf{H}, _{l} \right]  \ \ && \text{ in } \Omega^ \epsilon  \times (0,T] ,  \\
\operatorname{div} _{\boldsymbol{\eta}} \mathbf{v}_t &=-v^i,_j \partial_t A^j_i \ \ && \text{ in } \Omega^\epsilon  \times [0,T] , \label{tim-v-eq}  \\
\displaystyle
\partial _{t}^{2} \mathbf{H} &=\partial _{t}H ^{k}A  _{k}^{\lambda}\mathbf{v} , _{\lambda}+ H ^{k}\partial _{t}(A  _{k}^{\lambda}\mathbf{v}  , _{\lambda})
, \label{H-t-EQ}\ \ && \text{ in } \Omega^\epsilon  \times [0,T] ,  \\
\displaystyle \operatorname{div} _{\boldsymbol{\eta}} \mathbf{H}_t &=-H^i,_j \partial_t A^j_i \ \ && \text{ in } \Omega^\epsilon  \times [0,T] ,  \\
\partial_t\left[ \operatorname{Def} _ {\boldsymbol{\eta}} \mathbf{v} \cdot \mathbf{n} - Q \mathbf{n}\right] &=0\ \ && \text{ on } \Gamma^ \epsilon  \times [0,T] \,,  \\
(\boldsymbol{\eta},\mathbf{v}, \partial _{t}\mathbf{v},\mathbf{H} , \partial _{t}\mathbf{H} )  &=(\mathbf{e},\mathbf{u}_0^ \epsilon ,\mathbf{u}_1^ \epsilon, \mathbf{H}_{0},\mathbf{H} _{1} ) \ \  \ \ && \text{ in } \Omega^\epsilon  \times \{t=0\} , 
 \nonumber 
\end{alignat}
\end{subequations}
Here we define $u_1^ \epsilon = \nu\Delta \mathbf{u}_0^ \epsilon - \nabla P_0^ \epsilon + \frac{1}{4 \pi}\mathbf{H}_{0}^{\epsilon} \cdot \nabla \mathbf{H}_{0}^{\epsilon}$, and $ \mathbf{H}_{1}^{\epsilon}=\mathbf{H}_{0}^{\epsilon}\cdot \nabla \mathbf{u}_{0}^{\epsilon}$. So we have
\begin{equation}\label{cs10}
\| u_1^ \epsilon \|_{0,\Omega^\epsilon} +  \|  \mathbf{H} _{1}^ \epsilon\|_{0,\Omega^\epsilon} \le C \mathcal {P}(\mathcal{E}(0)),
\end{equation}
where the coefficients of $\mathcal {P}$ are independent of $\epsilon$. The estimates on $ \mathbf{H}_{t} $ and $ \mathbf{v}_{t} $ can be stated in the following lemma.
\begin{lemma}\label{lem-tim-deriv}
The following estimates holds:
\begin{align}\label{tim-coclu-lem}
&\sup_{t \in [0,T]}  ( \| \mathbf{v}_t ( \cdot ,t) \|^2_{0, \Omega^\epsilon }+\|\mathbf{H} _{t}(\cdot,t)\| _{0,\Omega ^{\epsilon}}^{2})+ \int_0^T\| \mathbf{v}_t\|^2_{1, \Omega^\epsilon } \mathrm{d}t 
 \nonumber \\ 
 &~\displaystyle
 \leq 
\mathcal{M}_0 +    C _{\delta} T ^{1/2} \mathcal{P} (\sup_{t \in [0,T]}  \mathcal{E}^ \epsilon (t)) +
C\delta \sup_{t \in [0,T]} \mathcal{E}^ \epsilon (t) .
\end{align}
\end{lemma}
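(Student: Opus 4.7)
The idea is to test the two time-differentiated evolution equations by $\mathbf{v}_t$ and $\frac{1}{4\pi}\mathbf{H}_t$ respectively, sum them, and look for the key cancellation between the time-differentiated Lorentz force in the momentum equation and the time-differentiated stretching term in the magnetic equation. Concretely, I would multiply \eqref{NSlagt}b by $\mathbf{v}_t$ and integrate over $\Omega^\epsilon$; the term $\partial_t^2\mathbf{v}\cdot\mathbf{v}_t$ produces $\frac{1}{2}\frac{d}{dt}\|\mathbf{v}_t\|_{0,\Omega^\epsilon}^2$, the viscous term $-\Delta_{\boldsymbol{\eta}}\mathbf{v}_t\cdot\mathbf{v}_t$, after integration by parts using the Piola identity and the boundary condition obtained by differentiating \eqref{localNS.c} in $t$, yields the dissipation $\int A^k_sA^j_s v_t^i,_j v_t^i,_k$ plus commutator/boundary remainders, and the pressure term is handled by integration by parts together with the near-divergence relation \eqref{tim-v-eq}. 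Similarly, testing \eqref{H-t-EQ} with $\frac{1}{4\pi}\mathbf{H}_t$ gives $\frac{1}{8\pi}\frac{d}{dt}\|\mathbf{H}_t\|_{0,\Omega^\epsilon}^2$ on the left.

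The heart of the argument is to notice that the worst term on the right of the $\mathbf{v}_t$-test, namely $\frac{1}{4\pi}H^iA_i^l(\mathbf{H}_t),_l\cdot\mathbf{v}_t$, cancels (after one integration by parts using Piola, $\operatorname{div}_{\boldsymbol{\eta}}\mathbf{H}=0$, and the boundary condition $\mathbf{H}\cdot\mathbf{n}=0$ from \eqref{mhd-lagran-d}) against the leading piece $\frac{1}{4\pi}H^kA_k^\lambda (\mathbf{v}_t),_\lambda\cdot\mathbf{H}_t$ coming from the $\mathbf{H}_t$-test through the $H^k\partial_t(A_k^\lambda\mathbf{v},_\lambda)$ term in \eqref{H-t-EQ}. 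This is the same Lagrangian cancellation used to control the basic energy of the Lorentz-coupled system, now applied at the level of one time derivative. All other contributions — those involving $\partial_t A$, $A^T_t\nabla Q$, $[\partial_t(A^j_sA^k_s)\mathbf{v},_k],_j$, and the boundary remainders — will be treated as lower-order perturbations.

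Once the cancellation is in place, I would integrate the identity in time on $[0,t]$, $t\le T$, to obtain
\begin{align*}
\tfrac12\|\mathbf{v}_t(\cdot,t)\|_{0,\Omega^\epsilon}^2 &+ \tfrac{1}{8\pi}\|\mathbf{H}_t(\cdot,t)\|_{0,\Omega^\epsilon}^2 + c\nu\int_0^t\|\nabla\mathbf{v}_t\|_{0,\Omega^\epsilon}^2\,ds \\
&\le \tfrac12\|\mathbf{u}_1^\epsilon\|_{0,\Omega^\epsilon}^2 + \tfrac{1}{8\pi}\|\mathbf{H}_1^\epsilon\|_{0,\Omega^\epsilon}^2 + \mathcal{R},
\end{align*}
where coercivity of the dissipation follows from \eqref{est-A} (choose $\vartheta$ small), and $\mathcal{R}$ collects all commutator/lower-order terms. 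For the initial data, \eqref{cs10} gives the $\mathcal{M}_0$ contribution. For $\mathcal{R}$, I would estimate each factor using \eqref{bd-te-l}, Lemma \ref{lem-Sobolev}, the trace lemma \ref{lem-trace}, the identity $A=[\boldsymbol{\eta},_2\times\boldsymbol{\eta},_3\mid\boldsymbol{\eta},_3\times\boldsymbol{\eta},_1\mid\boldsymbol{\eta},_1\times\boldsymbol{\eta},_2]$, and the bounds $\|\nabla\boldsymbol{\eta}-\mathbb{I}_3\|_{L^\infty}\le\vartheta^{10}$ from \eqref{basic}, $\|A-\mathbb{I}_3\|_{L^\infty}\le\vartheta$ from \eqref{est-A}. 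Time factors $T$ or $T^{1/2}$ appear via $\boldsymbol{\eta}-\mathbf{e}=\int_0^t\mathbf{v}\,ds$ and $A_t=-A\nabla\mathbf{v}A$ together with Cauchy-Schwarz, e.g.\ $\|\partial_t A\|_{L^\infty_tL^3_x}\lesssim\|\mathbf{v}\|_{L^\infty_tH^2_x}$ and $\|\nabla\boldsymbol{\eta}-\mathbb{I}_3\|_{L^\infty_x}\le CT^{1/2}(\int_0^T\|\mathbf{v}\|_3^2)^{1/2}$. Pairing dangerous factors with the dissipation $\int_0^T\|\mathbf{v}_t\|_1^2$ through Young's inequality absorbs a $\delta\sup_t\mathcal{E}^\epsilon$ into the left-hand side, leaving $C_\delta T^{1/2}\mathcal{P}(\sup\mathcal{E}^\epsilon)$.

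The main obstacle will be organizing the integration-by-parts for the coupled Lorentz/stretching cancellation under the modified divergence and boundary structure inherited by $\mathbf{v}_t$ and $\mathbf{H}_t$: the boundary term from moving $\partial_l$ across the Lorentz term is $\int_{\Gamma^\epsilon}H^iA_i^l N_l^\epsilon(\mathbf{H}_t)\cdot\mathbf{v}_t/|A^T\mathbf{N}^\epsilon|\,dS$, which vanishes through $H^iA_i^l N_l^\epsilon/|A^T\mathbf{N}^\epsilon|=\mathbf{H}\cdot\mathbf{n}=0$, while the divergence-type commutator $(J_lA_i^l),_l=0$ removes the Jacobian dependence cleanly. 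The rest is careful product-rule accounting, and the requested estimate \eqref{tim-coclu-lem} follows by taking $\sup_{t\in[0,T]}$ of the resulting inequality.
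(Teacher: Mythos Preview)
Your Lorentz--stretching cancellation is exactly what the paper uses (it appears there as the combination $\mathcal{S}_9+\mathcal{S}_{12}$), and your handling of the viscous dissipation, the initial data, and the various $\partial_t A$-type commutators is sound. The gap is in the pressure step.

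When you test \eqref{NSlagt}b directly with $\mathbf{v}_t$ and integrate the term $A^T\nabla Q_t\cdot\mathbf{v}_t$ by parts, the interior contribution is $-\int_{\Omega^\epsilon} Q_t\,\operatorname{div}_{\boldsymbol{\eta}}\mathbf{v}_t\,\mathrm{d}\mathbf{x}$, and invoking \eqref{tim-v-eq} turns this into $\int_{\Omega^\epsilon} Q_t\,(v^i,_j\,\partial_t A^j_i)\,\mathrm{d}\mathbf{x}$. This term still carries $Q_t$, which is \emph{not} part of the energy $\mathcal{E}^\epsilon$; only $\int_0^T\|Q\|_2^2$ enters, and there is no mechanism in the a~priori bounds to control $\|Q_t\|_0$ or $\int_0^T\|Q_t\|_0^2$. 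An integration by parts in time does not rescue the argument either: the endpoint term $\int_{\Omega^\epsilon} Q(t)\,(v^i,_j\,\partial_t A^j_i)(t)\,\mathrm{d}\mathbf{x}$ carries no factor of $T$ or $T^{1/2}$ and cannot be absorbed or made small.

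The paper avoids this by not testing with $\mathbf{v}_t$ at all. Instead it introduces an auxiliary field $\mathbf{w}$ solving a Stokes-type problem with $\operatorname{div}_{\boldsymbol{\eta}}\mathbf{w}=-v^i,_j\,\partial_t A^j_i$, so that $\mathbf{v}_t-\mathbf{w}\in\mathcal{V}(t)=\{\boldsymbol{\phi}\in H^1:\operatorname{div}_{\boldsymbol{\eta}}\boldsymbol{\phi}=0\}$. Testing against $\mathbf{v}_t-\mathbf{w}$ makes the entire $A^T\nabla Q_t$ term disappear (boundary term plus vanishing divergence), and the price is a collection of extra terms involving $\mathbf{w}$ and $\mathbf{w}_t$ that are genuinely lower order thanks to the Stokes estimates $\|\mathbf{w}\|_k\le C\|v^i,_j\partial_t A^j_i\|_{k-1}+\ldots$; these furnish the needed factor of $T^{1/2}$. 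What remains in place of your $A^T_t\nabla Q$ term is the controllable $\int Q\,\partial_t A^k_i\,[v_t^i,_k-w^i,_k]$, for which the paper then separately shows $\|Q\|_{1,\Omega^\epsilon}\le\mathcal{P}(\sup_t\mathcal{E}^\epsilon)$ pointwise in time via the Stokes system \eqref{stokes_for_v}. Your cancellation argument and remainder estimates carry over essentially unchanged once you insert this $\mathbf{w}$-correction.
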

\begin{proof}
  Define the space of $ \mathop{\mathrm{div}}\nolimits _{\boldsymbol{\eta}} $ free vector fields on $ \Omega ^{\epsilon} $ as 
  \begin{align*}
  \displaystyle \mathcal{V}(t)=\left\{ \boldsymbol{\phi}=(\phi ^{1}(\cdot,t),\phi ^{2}(\cdot,t),\phi ^{3}(\cdot,t)) \in H ^{1}(\Omega ^{\epsilon};\mathbb{R}^{3})\vert\, \mathop{\mathrm{div}}\nolimits _{\boldsymbol{\eta}}\boldsymbol{\phi} =0 \right\}. 
  \end{align*}
  Testing the equation \eqref{tim-v-eq} against any $ \boldsymbol{\phi} \in \mathcal{V} $, we get 
  \begin{align}\label{test-tim-eq}
  \int_{ \Omega^\epsilon } \partial _{t}^{2}\mathbf{v} \cdot \boldsymbol{\phi} \mathrm{d} \mathbf{x} + \int_{\Omega ^ \epsilon } \p_t [A^k_{\lambda} A^j_{\lambda} v^i,_j] \, \phi^i,_k \mathrm{d} \mathbf{x} = \int_ {\Omega^\epsilon} Q\, \partial_t A^k_i \phi^i,_k \, \mathrm{d} \mathbf{x} +\frac{1}{4 \pi}\int _{\Omega ^{\epsilon}}\left[  H ^{i}A _{i}^{\lambda}\mathbf{H}, _{\lambda} \right]_{t}  \cdot \boldsymbol{\phi}\mathrm{d}\mathbf{x} .
  \end{align}
  As in \cite{Shkoller-2019-APAN}, define a vector field $\mathbf{w}$ through
\begin{subequations}
  \label{w-problem}
\begin{alignat}{2}
\operatorname{div} _ {\boldsymbol{\eta}}  \mathbf{w}  &= -v^i,_j \partial_t A^j_i    \ \  \ &&\text{in} \ \ \Omega^ \epsilon  ,\\
\mathbf{w}  &=  \phi(t) \mathbf{n}  \ \ &&\text{on} \ \ \Gamma^ \epsilon   ,
\end{alignat}
\end{subequations}
where $\phi(t) = -\int_{ \Omega^\epsilon } -v^i,_j \partial_t A^j_i \mathrm{d}\mathbf{x} / | \Gamma^ \epsilon |$. A solution $ \mathbf{w} $ of \eqref{w-problem} can be found by solving a Stokes-type problem, and $ \mathbf{w} $ satisfies the following estimates (cf. \cite{Shkoller-2019-APAN} or Lemma 3.2 in \cite{Shkoller-2010-Sima}): 
\begin{gather}\label{est-w}
\| \mathbf{w}( \cdot , t) \|_{k, \Omega^\epsilon } \le C\left( \|  v^i,_j (\cdot , t) \,  \p_t A^j_i(\cdot , t) \|_{k-1, \Omega^\epsilon } +  \| \phi(t) \mathbf{n} \|_{k-1/2, \Gamma^\epsilon }\right) 
\end{gather}
for $ k \geq 1 $, where $ C $ is a positive constant independent of $ \epsilon $. This along with \eqref{A-express} and \eqref{eta-eq} further implies that 
\begin{align}\label{w-esti-small}
\displaystyle \sup _{t \in [0,T]} \|\mathbf{w}(\cdot,t)\| _{1,\Omega ^{\epsilon}}^{2}+ \int _{0}^{T} \|\mathbf{w}(\cdot,t)\| _{2,\Omega ^\epsilon}^{2} \mathrm{d}t \leq \mathcal{M}_{0}+T ^{1/2}\mathcal{P}( \sup_{t \in [0,T]} \mathcal{E}^ \epsilon (t)).
\end{align}
Similarly, we know that $ \mathbf{w}_{t} $ solves 
\begin{subequations}
  \label{wt}
\begin{alignat}{2}
\mathop{\mathrm{div}}\nolimits _{\boldsymbol{\eta}} \mathbf{w}_t  &=  -\left(w^i,_j \p_t A^j_i + \p_t(v^i,_j \p_t A^j_i )\right)   \ \  \ &&\text{in} \ \ \Omega^ \epsilon,\\
\mathbf{w}_t  &=  \left( \phi n \right) _t \ \ &&\text{on} \ \ \Gamma^ \epsilon,
\end{alignat}
\end{subequations}
and satisfies the estimates 
\begin{align*}
\displaystyle \|\mathbf{w}_{t}\|_{1,\Omega ^{\epsilon}}  \leq C \left( \|w^i,_j \p_t A^j_i + \p_t(v^i,_j \p_t A^j_i)\| _{0,\Omega ^{\epsilon}}+\|(\phi \mathbf{n})_{t}\| _{1/2, \Gamma ^{\epsilon}} \right) 
\end{align*}
and 
\begin{align}
\displaystyle \int _{0}^{T}\|\mathbf{w} _{t}\| _{1, \Omega ^{\epsilon}}^{2}\mathrm{d}t \leq \mathcal{P}( \sup_{t \in [0,T]} \mathcal{E}^ \epsilon (t)). 
\end{align}
Notice that $ \mathbf{v}_{t}- \mathbf{w} \in \mathcal{V}(t) $. Then we take $ \boldsymbol{\phi}= \mathbf{v}_{t}- \mathbf{w}$ in \eqref{test-tim-eq} to get 
\begin{align}\label{v-t-iden}
&\displaystyle {\frac{1}{2}} \frac{\mathrm{d}}{\mathrm{d}t} \| \mathbf{v}_t ( \cdot ,t) \|^2_{0, \Omega^\epsilon }+ \int_{\Omega ^ \epsilon } \p_t [A^k_{\lambda} A^j_{\lambda} v^i,_j]\, v_t^i,_k \mathrm{d}\mathbf{x}
\nonumber \\
 &~ \displaystyle  = \int_{ \Omega^\epsilon } \partial _{t}^{2} \mathbf{v} \cdot \mathbf{w} \mathrm{d} \mathbf{x} + \int_{\Omega^\epsilon }  \p_t (A^k_{\lambda} A^j_{\lambda} v^i,_j)  \, w^i,_k \mathrm{d}\mathbf{x} +  \int_ {\Omega^\epsilon} Q\, \p_t A^k_i  \left[v_t^i,_k - w^i,_k \right] \, \mathrm{d}\mathbf{x} \\ 
 & ~ \displaystyle \quad+  \int _{\Omega ^{\epsilon}} \partial _{t} H ^{i}A _{i}^{\lambda}\mathbf{H}, _{\lambda}^{j} (v _{t}^{j}-w^{j})\mathrm{d}\mathbf{x} +\int _{\Omega ^{\epsilon}}  H ^{i}\partial _{t}A _{i}^{\lambda}H, _{\lambda}^{j} (v _{t}^{j}-w^{j})\mathrm{d}\mathbf{x} 
  \nonumber \\ 
  &~\displaystyle \quad+ \int _{\Omega ^{\epsilon}}  H ^{i}A _{i}^{\lambda}\partial _{t}H , _{\lambda}^{j} (v _{t}^{j}-w^{j})\mathrm{d}\mathbf{x} .
\end{align}
Furthermore, testing \eqref{H-t-EQ} against $ \mathbf{H} _{t} $, we get
\begin{align*}
\displaystyle \frac{1}{2}\frac{\mathrm{d}}{\mathrm{d}t}\| \partial_{t}\mathbf{H} \|_{0,\Omega ^{\epsilon}}^{2}=  \int _{\Omega ^{\epsilon}}  \partial _{t}H ^{k}A  _{k}^{\lambda}\mathbf{v}, _{\lambda} \cdot \partial _{t} \mathbf{H} \mathrm{d}\mathbf{x}+ \int _{\Omega ^{\epsilon}}  H ^{k} \partial _{t}A  _{k}^{\lambda}\mathbf{v}  , _{\lambda}\cdot \partial _{t} \mathbf{H} \mathrm{d}\mathbf{x} + \int _{\Omega ^{\epsilon}} H ^{k} A  _{k}^{\lambda}\partial _{t} \mathbf{v}  , _{\lambda}\cdot \partial _{t} \mathbf{H} \mathrm{d}\mathbf{x}.
\end{align*}
This along with \eqref{v-t-iden} implies that 
\begin{align}\label{tim-sum-iden}
& {\frac{1}{2}}  \| \partial _{t}\mathbf{v} ( \cdot ,t) \|^2_{0, \Omega^\epsilon }+ \frac{1}{2}\| \partial_{t} \mathbf{H} \|_{0,\Omega ^{\epsilon}}^{2}+ \int_0^t\| \nabla \partial _{s} \mathbf{v}\|^2_{0, \Omega^\epsilon } \mathrm{d} s  
 \nonumber \\ 
 &~\displaystyle= {\frac{1}{2}}  (\|\mathbf{u}_1 ^{\epsilon} \|^2_{0, \Omega^\epsilon }+ \| \mathbf{H}_{1}^{\epsilon} \|_{0,\Omega ^{\epsilon}}^{2})
 -\underbrace{ \int_0^t \int_{\Omega ^ \epsilon }
 [A^k_{\lambda} A^j_{\lambda}  - \delta ^{kj}] v_{s}^i,_j\, v_{s}^i,_k  \mathrm{d}\mathbf{x} \mathrm{d} s}_{ \mathcal{S} _1}-\underbrace{\int_0^t \int_{\Omega ^ \epsilon } \p_s [A^k_{\lambda} A^j_{\lambda} ] v^i,_j v_s^i,_k \mathrm{d}\mathbf{x} \mathrm{d} s}_{ \mathcal{S} _2} 
  \nonumber \\ 
  & \qquad   
+ \underbrace{\int_0^t \int_{ \Omega^\epsilon } \partial _{s}^{2} \mathbf{v} \cdot \mathbf{w}  \mathrm{d}\mathbf{x} \mathrm{d} s}_{ \mathcal{S} _3}
+ \underbrace{ \int_0^t \int_{\Omega^\epsilon } \p_s [A^k_{\lambda} A^j_{\lambda} v^i,_j]\, w^i,_k  \mathrm{d}\mathbf{x} \mathrm{d} s}_{ \mathcal{S} _4}
+ \underbrace{ \int_0^t \int_ {\Omega^\epsilon} Q\, \p_s A^k_i  \left[ \partial _{s}v^i,_k - w^i,_k \right] \,  \mathrm{d}\mathbf{x} \mathrm{d} s}_{ \mathcal{S} _5} 
 \nonumber \\ 
 & \qquad
 +  \underbrace{\int_0^t  \int _{\Omega ^{\epsilon}}\partial _{s}  H ^{i}A _{i}^{\lambda}\mathbf{H}, _{\lambda} \cdot (\partial _{s}\mathbf{v} - \mathbf{w})\ \mathrm{d}\mathbf{x} \mathrm{d} s}_{ \mathcal{S} _6}
 + \underbrace{\int_0^t  \int _{\Omega ^{\epsilon}} H ^{i}\partial _{s} A _{i}^{\lambda}\mathbf{H}, _{\lambda} \cdot  ( \partial _{s}\mathbf{v} - \mathbf{w})\mathrm{d}\mathbf{x}\mathrm{d}s}_{ \mathcal{S} _7} 
  \nonumber \\ 
  & \qquad- \underbrace{\int_0^t \int _{ \Omega ^{\epsilon}}  H ^{i}A _{i}^{\lambda}\partial _{s} \mathbf{H}, _{\lambda}\cdot  \mathbf{w} \mathrm{d}\mathbf{x} \mathrm{d} s}_{ \mathcal{S} _8}
+\underbrace{ \int_0^t \int _{\Omega ^{\epsilon}}  H ^{i}A _{i}^{\lambda}\partial _{s} \mathbf{H}, _{\lambda} \cdot \mathbf{v} _{s} \mathrm{d}\mathbf{x} \mathrm{d} s}_{ \mathcal{S} _9}+ \underbrace{\int_0^t \int _{\Omega ^{\epsilon}}  \partial _{s}H ^{k}A  _{k}^{\lambda} \mathbf{v} , _{\lambda} \cdot \partial _{s} \mathbf{H} \mathrm{d}\mathbf{x} \mathrm{d} s} _{ \mathcal{S} _{10}} 
 \nonumber \\ 
 & \qquad+\underbrace{ \int_0^t \int _{\Omega ^{\epsilon}}  H ^{k}\partial _{s}A  _{k}^{\lambda}\mathbf{v}  , _{\lambda}\cdot \partial _{s} \mathbf{H}  \mathrm{d}\mathbf{x} \mathrm{d} s} _{ \mathcal{S} _{11}}
+ \underbrace{ \int_0^t \int _{\Omega ^{\epsilon}}  H ^{k}A  _{k}^{\lambda}\partial _{s}\mathbf{v}  , _{\lambda}\cdot \partial _{s} \mathbf{H}  \mathrm{d}\mathbf{x} \mathrm{d} s} _{ \mathcal{S} _{12}}.
\end{align}
We now estimate $ \mathcal{S}_{i}\,(i=1,\cdots,12) $ term by term. Thanks to \eqref{est-A} and the Cauchy-Schwarz inequality, we get for $ \vartheta <1/4 $,
\begin{align}
\displaystyle \mathcal{S}_{1} \leq \vartheta \int _{0}^{t}\|\nabla \mathbf{v}_{s}\|_{0,\Omega ^{\epsilon}}^{2}\mathrm{d}s \leq  \frac{1}{4} \int _{0}^{t}\|\nabla \mathbf{v}_{s}\|_{0,\Omega ^{\epsilon}}^{2}\mathrm{d}s.
\end{align}
Recalling \eqref{A-express}, we know that $\p_t A $ behaves like $ \nabla \eta \, \nabla \mathbf{v}$. Then we derive by virtue of \eqref{bd-te-l}, \eqref{na-eta-bd}, the Cauchy-Schwarz inequality and the Sobolev embedding theorem that
\begin{align}
\displaystyle \vert \mathcal{S}_{2}\vert& \leq C \int _{0}^{T}\int _{\Omega ^{\epsilon}}\vert \nabla  \mathbf{v}\vert ^{2}\vert \nabla \mathbf{v}_{t}\vert \mathrm{d}\mathbf{x}\mathrm{d}t  \leq C\int _{0}^{T} \|\nabla \mathbf{v}\|_{L ^{4}(\Omega ^{\epsilon})}^{2}\|\nabla \mathbf{v}_{t}\|_{0,\Omega ^{\epsilon}} \mathrm{d}t 
 \nonumber \\ 
 & \displaystyle  \leq C_ \delta  T P( \sup_{t \in [0,T]} \mathcal{E}^ \epsilon (t)) + \delta   \sup_{t \in [0,T]} \mathcal{E}^ \epsilon (t).
\end{align}
Similarly, we get 
\begin{align}
\displaystyle  \mathcal{S}_{4} & \leq C\int _{0}^{t}\int _{\Omega  ^{\epsilon}} \left( \vert \nabla \mathbf{v}\vert ^{2}+\vert \nabla \mathbf{v}_{s}\vert \right)  \vert \nabla \mathbf{w}\vert \mathrm{d}\mathbf{x}\mathrm{d}s 
 \nonumber \\ 
 & \displaystyle \leq \frac{1}{4} \int _{0}^{t}\|\nabla \mathbf{v}_{s}\|_{0,\Omega ^{\epsilon}}^{2}\mathrm{d}s+C \int _{0}^{t}\|\nabla \mathbf{w}\|_{0,\Omega ^{\epsilon}}^{2}\mathrm{d}s + C\int _{0}^{t}\|\nabla \mathbf{v}\|_{L ^{4}(\Omega ^{\epsilon})}^{2}\mathrm{d}s \nonumber \\ 
  & \displaystyle  \leq\frac{1}{8} \int _{0}^{t}\|\nabla \mathbf{v}_{s}\|_{0,\Omega ^{\epsilon}}^{2}\mathrm{d}s+\mathcal{M}_{0}+T \mathcal{P}( \sup_{t \in [0,T]} \mathcal{E}^ \epsilon (t)),
  \end{align}
\begin{align}
\displaystyle  \mathcal{S}_{5}& \leq C \int _{0}^{t}\int _{\Omega ^{\epsilon}}\vert Q\vert \vert \nabla \mathbf{v}\vert \left( \vert \nabla \mathbf{v}_{s}\vert+ \vert \nabla \mathbf{w}\vert \right) \mathrm{d}\mathbf{x}\mathrm{d}s 
 \nonumber \\ 
 & \leq  \frac{1}{4} \int _{0}^{t}\|\nabla \mathbf{v}_{s}\|_{0,\Omega ^{\epsilon}}^{2}\mathrm{d}s + C\int _{0}^{t} \|\nabla \mathbf{w}\|_{0,\Omega ^{\epsilon}}^{2}\mathrm{d}s + C \int _{0}^{t} \int _{\Omega ^{\epsilon}}\vert Q\vert ^{2}\vert \nabla \mathbf{v}\vert ^{2}\mathrm{d}\mathbf{x}\mathrm{d}s 
  \nonumber \\ 
  & \displaystyle \leq \frac{1}{4} \int _{0}^{t}\|\nabla \mathbf{v}_{s}\|_{0,\Omega ^{\epsilon}}^{2}\mathrm{d}s + C\int _{0}^{t} \|\nabla \mathbf{w}\|_{0,\Omega ^{\epsilon}}^{2}\mathrm{d}s+ C   \int _{0}^{t}\|Q\| _{L ^{4}(\Omega ^{\epsilon})}^{2}\|\nabla \mathbf{v}\|_{L ^{4}(\Omega ^{\epsilon})}^{2} \mathrm{d}s 
   \nonumber \\ 
   & \displaystyle \leq\frac{1}{4} \int _{0}^{t}\|\nabla \mathbf{v}_{s}\|_{0,\Omega ^{\epsilon}}^{2}\mathrm{d}s+ \mathcal{M}_{0}+T ^{1/2}\mathcal{P}( \sup_{t \in [0,T]} \mathcal{E}^ \epsilon (t))+  C\sup_{t \in [0,T]} \mathcal{E}^ \epsilon (t)\int _{0}^{t}\|Q\|_{1, \Omega ^{\epsilon}}^{2}\mathrm{d}s 
\end{align}
and 
\begin{align}
\displaystyle \mathcal{S}_{7}+\mathcal{S}_{11}& \displaystyle \leq  C \int _{0}^{t} \int _{\Omega ^{\epsilon}} \vert \mathbf{H}\vert \vert \nabla \mathbf{v}\vert (\vert \nabla \mathbf{H}\vert \vert \mathbf{v}_{s}\vert + \vert \nabla \mathbf{H}\vert \vert \mathbf{w}\vert+\vert \nabla \mathbf{v}\vert \vert \mathbf{H}_{s}\vert) \mathrm{d}\mathbf{x}\mathrm{d}s 
 \nonumber \\ 
 & \displaystyle \leq C \int _{0}^{t} \|\mathbf{H}\|_{L ^{\infty}(\Omega^{\epsilon})} \|\nabla \mathbf{v}\|_{L ^{4}(\Omega ^{\epsilon})} \|\nabla \mathbf{H}\|_{L ^{4}(\Omega ^{\epsilon})}(\|\mathbf{v}_{s}\|_{0,\Omega ^{\epsilon}}+\|\mathbf{w}\|_{0,\Omega ^{\epsilon}})\mathrm{d}s 
  \nonumber \\ 
  & \displaystyle \quad + C \int _{0}^{t} \|\mathbf{H}\|_{L ^{\infty}(\Omega^{\epsilon})} \|\nabla \mathbf{v}\|_{L ^{4}(\Omega ^{\epsilon})} \|\nabla \mathbf{v}\|_{L ^{4}(\Omega ^{\epsilon})} \|\mathbf{H}_{s}\|_{0,\Omega ^{\epsilon}}\mathrm{d}s 
   \nonumber \\ 
   & \displaystyle \leq \mathcal{M}_{0}+T ^{1/2}\mathcal{P}( \sup_{t \in [0,T]} \mathcal{E}^ \epsilon (t)).
\end{align}
For $ \mathcal{S}_{3} $, we integrate by parts to get 
\begin{align}
\displaystyle  |\mathcal{S} _3| & \leq C  \int_0^t \int_{ \Omega^\epsilon } |\mathbf{v}_{s} \cdot \mathbf{w}_s | \mathrm{d}x \mathrm{d}s + \left| \left. \int_{ \Omega^\epsilon } \mathbf{v}_{s} \cdot \mathbf{w}  \mathrm{d}\mathbf{x}\right|^t_0 \right|
\nonumber \\
& \displaystyle \leq \mathcal{M}_{0}+ C\int _{0}^{t}\int _{\Omega ^{\epsilon}}\vert \mathbf{v}_{s}\cdot \mathbf{w}\vert \mathrm{d}\mathbf{x}\mathrm{d}t + \delta \|\mathbf{v}_{t}(\cdot,t)\|_{0,\Omega ^{\epsilon}}^{2}+C _{\delta}\|\mathbf{w}(\cdot,t)\|_{0, \Omega ^{\epsilon}} ^{2} 
 \nonumber \\ 
 & \displaystyle \leq \mathcal{M} _{0}+C _{\delta} T ^{1/2}\mathcal{P}((\sup_{t \in [0,T]}  \mathcal{E}^ \epsilon (t)))+ \delta \|\mathbf{v}_{t}(\cdot,t)\|_{0, \Omega ^{\epsilon}}^{2}
\end{align}
for any $ \delta>0 $, where we have used \eqref{w-esti-small} and the Cauchy-Schwarz inequality. Similarly, we have for $ \mathcal{S}_{8} $ that 
\begin{align}\label{S-8-esti}
\displaystyle  \mathcal{S}_{8}& =\int_0^t \int _{ \partial\Omega ^{\epsilon}}\mathbf{H} ^{i}A _{i}^{\lambda}N _{\lambda}^{\epsilon}\partial _{s}H ^{j} w^{j}\mathrm{d}s -  \int_0^t \int _{ \Omega ^{\epsilon}}H ^{i}A _{i}^{\lambda},_{\lambda} \partial _{s}H ^{j} w^{j} \mathrm{d}\mathbf{x}\mathrm{d}s  
 \nonumber \\ 
 &\displaystyle \quad - \int_0^t \int _{ \Omega ^{\epsilon}}H ^{i},_{\lambda} A _{i}^{\lambda}\partial _{s}H ^{j} w^{j} \mathrm{d}\mathbf{x}\mathrm{d}s-\int_0^t \int _{ \Omega ^{\epsilon}}H ^{i}A _{i}^{\lambda}\partial _{s}H ^{j} w^{j},_{\lambda} \mathrm{d}\mathbf{x} \mathrm{d}s
  \nonumber \\ 
  &\displaystyle =-  \underbrace{ \int_0^t \int _{ \Omega ^{\epsilon}}H ^{i},_{\lambda}A _{i}^{\lambda}\partial _{s}H ^{j} w^{j} \mathrm{d}\mathbf{x}\mathrm{d}s}_{ \mathcal{S} _{8}^{a}}- \underbrace{ \int_0^t \int _{ \Omega ^{\epsilon}}H ^{i}A _{i}^{\lambda}\partial _{s}H ^{j} w^{j},_{\lambda} \mathrm{d}\mathbf{x}\mathrm{d}s}_{ \mathcal{S} _{8}^{b}},
\end{align}
where, thanks to the Sobolev embedding theorem, the H\"older inequality and the Young inequality, it holds that 
\begin{equation}\label{csj4}
\begin{aligned}
| \mathcal{S}_{8}^{a}| +  | \mathcal{S}_{8}^{b}|& \leq C  \int_0^t \int _{\Omega ^{\epsilon}} \vert \nabla \mathbf{H} \vert\vert \p_{s} \mathbf{H} \vert  \vert \mathbf{w} \vert \mathrm{d}\mathbf{x} \mathrm{d}s + C\int_0^t \int _{\Omega ^{\epsilon}} \vert  \mathbf{H}\vert\vert \p_{s} \mathbf{H} \vert  \vert \nabla \mathbf{w} \vert \mathrm{d}\mathbf{x}\mathrm{d}s \\
& \displaystyle \leq C \int_{0}^{t} \| \mathbf{H} \|_{2, \Omega ^{\epsilon}}  \| \mathbf{H}_s \|_{0,\Omega ^{\epsilon}}   \| \mathbf{w} \|_{1,\Omega ^{\epsilon}}\mathrm{d}s \\
& \displaystyle \leq \mathcal{M}_{0}+ C _{\delta}   T ^{1/2} \mathcal{P} (\sup_{t \in [0,T]}  \mathcal{E}^ \epsilon (t)) + \delta \sup_{t \in [0,T]} \mathcal{E}^ \epsilon (t)
\end{aligned}
\end{equation} 
for any $ \delta>0 $. Therefore we have from \eqref{S-8-esti} that 
\begin{align}
\displaystyle \mathcal{S} _{8} \leq \mathcal{M}_{0}+ C _{\delta}   T ^{1/2} \mathcal{P} (\sup_{t \in [0,T]}  \mathcal{E}^ \epsilon (t)) + \delta \sup_{t \in [0,T]} \mathcal{E}^ \epsilon (t).
\end{align}
The estimates for $ \mathcal{S}_{6} $ is straightforward. Indeed, it holds that 
\begin{align}
&\displaystyle  \mathcal{S}_{6}\leq C \int _{0}^{T}\int _{\Omega ^{\epsilon}}  \vert \mathbf{H}_{t}\vert \vert \nabla\mathbf{H}\vert\left( \vert \mathbf{v}_{t}\vert+ \vert \mathbf{w}\vert\right) \mathrm{d}\mathbf{x}\mathrm{d}t 
   \nonumber \\ 
   & \displaystyle \leq C \int _{0}^{T}  \|\mathbf{H}_{t}\|_{0,\Omega^\epsilon} \|\nabla \mathbf{H}\| _{L ^{4}(\Omega ^{\epsilon})}(\|\mathbf{v}_{t}\|_{L ^{4}(\Omega ^{\epsilon})}+\|\mathbf{w}\|_{L ^{4}(\Omega ^{\epsilon})}) \mathrm{d}t 
    \nonumber \\ 
    & \displaystyle \leq \mathcal{M}_{0}+ C _{\delta}   T  \mathcal{P} (\sup_{t \in [0,T]}  \mathcal{E}^ \epsilon (t)) + \delta \sup_{t \in [0,T]} \mathcal{E}^ \epsilon (t).
\end{align}
Finally, for $ \mathcal{S} _9$ and $ \mathcal{S} _{12}$, with a substitution of index, these two items can be combined to use the integration by parts in space again:
\begin{align*}
 \mathcal{S} _9 + \mathcal{S} _{12}&= \int_0^t \int _{ \partial\Omega ^{\epsilon}}H ^{i}A _{i}^{\lambda}N _{\lambda}^{\epsilon}\partial _{s}H ^{j} v _{s}^{j}\mathrm{d}\mathbf{x}\mathrm{d}s -  \int_0^t \int _{ \Omega ^{\epsilon}}\left( H ^{i}A _{i}^{\lambda},_{\lambda} \partial _{s}H ^{j} v _{s}^{j}+ H ^{i},_{\lambda}A _{i}^{\lambda}\partial _{s}H ^{j} v _{s}^{j}\right)  \mathrm{d}\mathbf{x}\mathrm{d}s \\&= \underbrace{  - \int_0^t  \int _{ \Omega ^{\epsilon}}H ^{i},_{\lambda} A _{i}^{\lambda}\partial _{s}H ^{j} v _{s}^{j} \mathrm{d}\mathbf{x}\mathrm{d}s}_{ \mathcal{S} _{13}},
\end{align*}
with  $\mathcal{S}_{13}$ being estimated, by \eqref{na-eta-bd}, the Sobolev embedding theorem, the H\"older inequality and the Young inequality, as 
\begin{align*}
| \mathcal{S}_{13}|   & \leq C  \int_0^t \int _{\Omega ^{\epsilon}} \vert \nabla \mathbf{H} \vert\vert \p_{s} \mathbf{H} \vert \vert \nabla \boldsymbol{\eta} \vert^2 \vert  \mathbf{v}_s \vert \mathrm{d}\mathbf{x}\mathrm{d}s
\\
&\displaystyle \leq C \int_{0}^{T} \| \mathbf{H} \|_{2, \Omega ^{\epsilon}}  \| \mathbf{H}_t \|_{0, \Omega ^{\epsilon}} \| \boldsymbol{\eta}\|_{3,\Omega ^{\epsilon}}^2  \| \mathbf{v}_t \|_{1, \Omega ^{\epsilon} } \mathrm{d}t\\
& \displaystyle \leq C _{\delta}      T \mathcal{P} (\sup_{t \in [0,T]}  \mathcal{E}^ \epsilon (t)) + \delta \sup_{t \in [0,T]} \mathcal{E}^ \epsilon (t) .
\end{align*}
Therefore we get for any $ \delta>0 $, 
\begin{align}
\displaystyle  \mathcal{S} _9 + \mathcal{S} _{12} \leq C _{\delta}      T \mathcal{P} (\sup_{t \in [0,T]}  \mathcal{E}^ \epsilon (t)) + \delta \sup_{t \in [0,T]} \mathcal{E}^ \epsilon (t).
\end{align}
Collecting all the estimates for $ \mathcal{S}_{i}\,(1,\cdots,12) $, we then get from \eqref{tim-sum-iden} that 
\begin{align}\label{tim-sumup-conclu}
&\displaystyle \| \mathbf{v}_t ( \cdot ,t) \|^2_{0, \Omega^\epsilon }+ \|\mathbf{H} _{t}\|_{0,\Omega ^{\epsilon}}^{2}+ \int_0^t\| \nabla \mathbf{v}_s\|^2_{0, \Omega^\epsilon } \mathrm{d} s 
 \nonumber \\ 
 &~\displaystyle \leq  \mathcal{M}_{0}+ C _{\delta}   T ^{1/2}  \mathcal{P} (\sup_{t \in [0,T]}  \mathcal{E}^ \epsilon (t)) + \delta \sup_{t \in [0,T]} \mathcal{E}^ \epsilon (t)+  C\sup_{t \in [0,T]} \mathcal{E}^ \epsilon (t)\int _{0}^{t}\|Q\|_{1, \Omega ^{\epsilon}}^{2}\mathrm{d}s 
\end{align}
for any $ \delta>0 $. To finish the proof of the present lemma, we control the last term on the right hand side of \eqref{tim-sumup-conclu}. We write equation (\ref{mhd-lagran}b) as
\begin{subequations}
\label{stokes_for_v}
\begin{alignat}{2}
-\Delta \mathbf{v} + \nabla Q & = \operatorname{div} [ (AA^T - \mathbb{I}_{3} ) \nabla \mathbf{v}]  - (A^T - \mathbb{I}_{3} ) \nabla Q- \mathbf{v}_t - \frac{1}{4 \pi} H ^{i}A _{i}^{\lambda} \mathbf{H}, _{\lambda}\ \ && \text{ in } \Omega^ \epsilon  \times (0,T] \,, \label{stokes_for_v.a} \\
\operatorname{div}  \mathbf{v} &= - (A^j_i - \delta ^j_i) v^i,_j \ \ && \text{ in } \Omega^\epsilon  \times [0,T] \,,  \\
\mathbf{v} & \in L^2(0,T; H^{2.5}(\Gamma^ \epsilon )\,. &&
\end{alignat}
\end{subequations}
According to Lemma \ref{lem-Stoke}, we have 
\begin{align}
\displaystyle \|\mathbf{v}\|_{2,\Omega ^{\epsilon}}+\| \nabla Q\|_{0,\Omega ^{\epsilon}} & \leq C \|\operatorname{div} [ (AA^T - \mathbb{I}_{3} ) \nabla \mathbf{v}]\|_{0, \Omega ^{\epsilon}}+C\|(A^T - \mathbb{I}_{3} ) \nabla Q\|_{0,\Omega ^{\epsilon}}+C\|\mathbf{v}_{t}\|_{0,\Omega ^{\epsilon}} 
  \nonumber \\ 
  & \displaystyle \quad +C\|H ^{i}A _{i}^{\lambda}\mathbf{H}, _{\lambda}\|_{0,\lambda}+C \|(A^j_i - \delta ^j_i) v^i,_j\| _{1,\Omega^\epsilon}+\vert \mathbf{v}\vert _{1.5, \Gamma ^{\epsilon}}.
\end{align}
This along with \eqref{na-eta-bd}, \eqref{A-AT-bd} and Lemma  \ref{lem-trace} further implies that 
\begin{align}\label{Q-1-esti}
 \displaystyle \| \nabla Q\|_{0,\Omega ^{\epsilon}} \leq C \left( \|\mathbf{v} _{t}\|_{0,\Omega^\epsilon}+ \|\mathbf{H}\|_{2,\Omega ^{\epsilon}}+\vert \mathbf{v}\vert _{1.5, \Gamma ^{\epsilon}} \right)  \leq C \sup _{t \in [0,T]}\mathcal{E}(t).   
 \end{align}
Next, take a function $ \chi $ such that 
\begin{align*}
\displaystyle - \Delta \chi =1\ \ \mbox{in } \Omega ^{\epsilon},\ \ \chi=0\ \ \mbox{on }\Gamma ^{\epsilon}.
\end{align*}
Then we get, thanks to \eqref{bd-te-l} and integration by parts,  
\begin{align}
\displaystyle  \int _{\Omega ^{\epsilon}}Q \mathrm{d}\mathbf{x} &=- \int _{\Omega ^{\epsilon}}\Delta \chi Q \mathrm{d}\mathbf{x} =- \int _{\Gamma ^{\epsilon}} \frac{\partial \chi}{\partial N ^{\epsilon}} Q \mathrm{d}S+ \int _{\Omega ^{\epsilon}}\nabla \chi \cdot \nabla Q \mathrm{d}\mathbf{x} 
 \nonumber \\ 
 & \displaystyle \leq C \left( \| Q\| _{0,\Gamma ^{\epsilon}}+ \|\nabla Q\|_{0,\Omega ^{\epsilon}} \right), 
\end{align}
where the constant $ C>0 $ is independent of $ \epsilon $. This along with \eqref{Q-1-esti} implies that 
\begin{align}\label{Q-L2}
\displaystyle \|Q\|_{0,\Omega ^{\epsilon}} \leq C \| Q\| _{0, \Gamma ^{\epsilon}}+ C \sup _{t \in [0,T]}\mathcal{E}(t).
\end{align}
Notice that 
\begin{gather*}
\displaystyle Q =  \mathbf{n}\cdot \left[ \text{Def}_{\boldsymbol{\eta}} \mathbf{v}\cdot \mathbf{n}\right] \ \ \  \text{ on } \Gamma^\epsilon  \times [0,T].
\end{gather*}
Then by the Sobolev embedding theorem, we get 
\begin{gather*}
\displaystyle  \| Q(\cdot,t)\| _{0,\Gamma ^{\epsilon}} \leq C \|\boldsymbol{\eta}(\cdot,t)\|_{3,\Omega ^{\epsilon}}^{3}\| \mathbf{v} (\cdot,t)\| _{2, \Omega ^{\epsilon}} \leq \mathcal{P}(\sup _{t \in [0,T]}\mathcal{E}^{\epsilon}(t))
\end{gather*}
for any $ t \in [0,T] $. Therefore we have from \eqref{Q-L2} that 
\begin{align}\label{Q-l2-con}
\displaystyle  \|Q\|_{0,\Omega ^{\epsilon}} \leq \mathcal{P}(\sup _{t \in [0,T]}\mathcal{E}^{\epsilon}(t)),
\end{align}
and thus, 
\begin{gather}\label{Q-h1-conclu}
\displaystyle \|Q\|_{1,\Omega ^{\epsilon}} \leq  \mathcal{P}(\sup _{t \in [0,T]}\mathcal{E}^{\epsilon}(t)).
\end{gather}
Combining \eqref{Q-h1-conclu} with \eqref{tim-sumup-conclu}, we then have for $ T<1 $ that 
 \begin{align}
 \displaystyle  \| \mathbf{v}_t ( \cdot ,t) \|^2_{0, \Omega^\epsilon }+ \|\mathbf{H} _{t}\|_{0,\Omega ^{\epsilon}}^{2}+ \int_0^t\| \nabla \mathbf{v}_s\|^2_{0, \Omega^\epsilon } \mathrm{d} s 
 \leq  \mathcal{M}_{0}+ C _{\delta}   T ^{1/2}  \mathcal{P} (\sup_{t \in [0,T]}  \mathcal{E}^ \epsilon (t)) + \delta \sup_{t \in [0,T]} \mathcal{E}^ \epsilon (t) .
 \end{align}
  This gives \eqref{tim-coclu-lem}.
\end{proof}

Now we are ready to verify the \emph{a priori} assumption \eqref{basic}, and then finish the proof of Proposition \ref{prop-key}. 
\begin{lemma}\label{lem-verify}
For sufficiently small $ T>0 $, it holds that 
\begin{align}
\displaystyle  \sup_{t \in [0,T]} \| \nabla \boldsymbol{\eta} (\cdot , t)- \mathbb{I}_{3} \|_{ L^\infty(\Omega^ \epsilon )} \le  \frac{1}{2}\vartheta^{10},\ \ \ \sup_{t \in [0,T]} \mathcal{E}^ \epsilon (t) \le 2 \mathcal{M}_0.
\end{align}
\end{lemma}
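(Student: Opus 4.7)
The plan is to close a bootstrap argument: combine Lemmas \ref{lem-bdy-regul}, \ref{lem-H}, and \ref{lem-tim-deriv} with an elliptic regularity bound for $\mathbf{v}$ and $Q$ coming from the Stokes system \eqref{stokes_for_v}, then absorb the $\delta \sup \mathcal{E}^\epsilon$ terms and use smallness of $T$ to kill the polynomial in $\mathcal{E}^\epsilon$.

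First, I would upgrade the interior estimates for $\mathbf{v}$ and $Q$. Viewing \eqref{mhd-lagran} as the Stokes problem \eqref{stokes_for_v} with right-hand sides that are quadratic in the deformation, magnetic field, and velocity, Lemma \ref{lem-Stoke} applied at regularity $k=3$ gives
\begin{align*}
\|\mathbf{v}(\cdot,t)\|_{3,\Omega^\epsilon} + \|Q(\cdot,t)\|_{2,\Omega^\epsilon} \le C\bigl(\|\mathbf{v}_t\|_{1,\Omega^\epsilon} + \text{l.o.t.}\bigr) + C\|\mathbf{v}\|_{2.5,\Gamma^\epsilon},
\end{align*}
where the lower-order terms involve $\|H^i A_i^\lambda H,_\lambda\|_{1,\Omega^\epsilon}$ and factors of $\|(AA^T-\mathbb{I}_3)\|_{L^\infty}$, $\|(A^T-\mathbb{I}_3)\nabla Q\|_{1,\Omega^\epsilon}$, etc. Using Lemma \ref{7.1} (so that the prefactor $\vartheta$ on $\|\nabla^2\mathbf{v}\|$ and $\|\nabla Q\|$ is absorbed on the left) and Lemma \ref{lem-H} to dominate the Lorentz term, I would square and integrate in time to obtain
\begin{align*}
\int_0^T \bigl(\|\mathbf{v}\|_{3,\Omega^\epsilon}^2 + \|Q\|_{2,\Omega^\epsilon}^2\bigr)\,\mathrm{d}t \le \mathcal{M}_0 + \int_0^T\|\mathbf{v}\|_{2.5,\Gamma^\epsilon}^2\,\mathrm{d}t + \int_0^T\|\mathbf{v}_t\|_{1,\Omega^\epsilon}^2\,\mathrm{d}t + C_\delta T \mathcal{P}(\sup_{[0,T]}\mathcal{E}^\epsilon) + \delta\sup_{[0,T]}\mathcal{E}^\epsilon.
\end{align*}
The $\int_0^T\|\mathbf{v}\|_{2.5,\Gamma^\epsilon}^2$ term is controlled by Lemma \ref{lem-bdy-regul} and the $\int_0^T\|\mathbf{v}_t\|_{1,\Omega^\epsilon}^2$ term by Lemma \ref{lem-tim-deriv}. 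The pieces $\|\boldsymbol{\eta}\|_{3,\Omega^\epsilon}^2$ and $\|\mathbf{v}\|_{2,\Omega^\epsilon}^2$ follow from $\boldsymbol{\eta}=\mathbf{e}+\int_0^t\mathbf{v}\,\mathrm{d}s$ and interpolation between the already-controlled norms, picking up a factor $T^{1/2}$.

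Adding all pieces of $\mathcal{E}^\epsilon(t)$, summing over $1\le l\le L$ with the partition of unity \eqref{alm-unit}, I obtain the key master inequality
\begin{align*}
\sup_{t\in[0,T]}\mathcal{E}^\epsilon(t) \le C_*\mathcal{M}_0 + C_\delta T\, \mathcal{P}\bigl(\sup_{t\in[0,T]}\mathcal{E}^\epsilon(t)\bigr) + C_*\delta\sup_{t\in[0,T]}\mathcal{E}^\epsilon(t),
\end{align*}
with $C_*$ and $\mathcal{P}$ independent of $\epsilon$. Choosing $\delta = 1/(4C_*)$ absorbs the last term on the left, leaving
\begin{align*}
\sup_{t\in[0,T]}\mathcal{E}^\epsilon(t) \le \tfrac{4}{3}C_*\mathcal{M}_0 + \tfrac{4}{3}C_\delta T\,\mathcal{P}\bigl(\sup_{t\in[0,T]}\mathcal{E}^\epsilon(t)\bigr).
\end{align*}
A standard continuity argument (the map $t\mapsto \mathcal{E}^\epsilon(t)$ is continuous, equals $\mathcal{M}_0$ at $t=0$, and the set where $\mathcal{E}^\epsilon\le 2\mathcal{M}_0$ is both open and closed when $T$ is small enough that $\tfrac{4}{3}C_\delta T\,\mathcal{P}(2\mathcal{M}_0)\le \tfrac{2}{3}\mathcal{M}_0$) yields $\sup_{[0,T]}\mathcal{E}^\epsilon(t)\le 2\mathcal{M}_0$ for some $T$ depending only on $\mathcal{M}_0$, hence independent of $\epsilon$.

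Finally, to recover the strengthened form \eqref{basic}, note that $\boldsymbol{\eta}(\mathbf{x},t)-\mathbf{x} = \int_0^t \mathbf{v}(\mathbf{x},s)\,\mathrm{d}s$, so $\nabla\boldsymbol{\eta}-\mathbb{I}_3 = \int_0^t \nabla\mathbf{v}\,\mathrm{d}s$. By Lemma \ref{lem-Sobolev} with $s=3>3/2$ and Cauchy--Schwarz,
\begin{align*}
\sup_{t\in[0,T]}\|\nabla\boldsymbol{\eta}(\cdot,t)-\mathbb{I}_3\|_{L^\infty(\Omega^\epsilon)} \le C T^{1/2}\Bigl(\int_0^T\|\mathbf{v}\|_{3,\Omega^\epsilon}^2\,\mathrm{d}t\Bigr)^{1/2}\le C T^{1/2}\sqrt{2\mathcal{M}_0},
\end{align*}
which is $\le \tfrac12\vartheta^{10}$ provided $T$ is further shrunk. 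The main subtlety is the second inequality in the chain, which requires that the new time interval match the one used for the energy estimate; taking the minimum of the two thresholds (both depending only on $\mathcal{M}_0$) resolves this and closes the bootstrap, completing the proof of Proposition \ref{prop-key}. The principal obstacle throughout is not any single estimate but bookkeeping: ensuring that the constants in the Stokes/trace/Sobolev inequalities remain $\epsilon$-independent (guaranteed by Lemma \ref{lem-te-l-bd} and \eqref{bd-te-l}) and that each appearance of $\sup_{[0,T]}\mathcal{E}^\epsilon$ is eventually absorbed either via $\delta$ smallness or the factor $T^{1/2}$.
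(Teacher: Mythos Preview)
Your proposal is correct and follows essentially the same approach as the paper: apply the $\epsilon$-independent Stokes estimate to \eqref{stokes_for_v}, combine with Lemmas \ref{lem-bdy-regul}--\ref{lem-tim-deriv} to obtain the master inequality, absorb the $\delta$-term, and close the bootstrap via continuity of $t\mapsto\mathcal{E}^\epsilon(t)$ and the flow-map identity $\nabla\boldsymbol{\eta}-\mathbb{I}_3=\int_0^t\nabla\mathbf{v}\,\mathrm{d}s$. The only point you state without justification that the paper actually proves is the continuity of $\mathcal{E}^\epsilon(t)$ in $t$ (via $\mathbf{v}\in C([0,T];H^2)$, $\mathbf{H}\in C([0,T];H^2)$, $Q\in C([0,T];H^1)$, and $\mathbf{v}_t,\mathbf{H}_t\in C([0,T];L^2)$), which is needed for the continuity argument to go through.
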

\begin{proof}
Applying Lemma \ref{lem-Stoke} to \eqref{tim-coclu-lem}, by using \eqref{Q-L2}, we have $\mathbf{v} \in L^\infty([0,T]; H^2(\Omega^ \epsilon ) ) \cap L^2(0,T; H^3( \Omega^\epsilon ))$ and
\begin{align*}
 & \sup_{t \in [0,T]}   \| \mathbf{v} ( \cdot ,t) \|^2_{2, \Omega^\epsilon }+ \int_0^T\| \mathbf{v}\|^2_{3, \Omega^\epsilon } \mathrm{d}t +  \int_0^T\| Q\|^2_{2, \Omega^\epsilon } \mathrm{d}t  \\
  &~ \leq \| \operatorname{div} [ (AA^T - \mathbb{I}_{3}) \nabla \mathbf{v}]  - (A^T - \mathbb{I}_{3} ) \nabla Q- \mathbf{v}_t - \frac{1}{\mu _{0}} H ^{i}A _{i}^{\lambda}\mathbf{H}, _{\lambda} \|^2_{1, \Omega^\epsilon } 
  \nonumber \\ 
  & ~ \quad\displaystyle +  \| (A^j_i - \delta ^j_i) v^i,_j  \|^2_{1, \Omega^\epsilon } + \int_0^T \vert \mathbf{v}  \vert^2_{2.5, \Gamma^\epsilon } \mathrm{d}t.
\end{align*}
Furthermore, the terms on the right side of the above estimation can be handled by using the Sobolev embedding theorem, the H\"older inequality and the Cauchy-Schwarz inequality. Therefore we get
\begin{align}\label{cs11}
&\sup_{t \in [0,T]}   \| \mathbf{v} ( \cdot ,t) \|^2_{2, \Omega^\epsilon }+ \int_0^T\| \mathbf{v}\|^2_{3, \Omega^\epsilon } \mathrm{d}t +  \int_0^T\| Q\|^2_{2, \Omega^\epsilon } \mathrm{d}t 
 \nonumber \\ 
 &~\displaystyle\leq 
\mathcal{M}_0 +     T \mathcal{P} (\sup_{t \in [0,T]}  \mathcal{E}^ \epsilon (t)) +
C\delta \sup_{t \in [0,T]} \mathcal{E}^ \epsilon (t) \,.
\end{align}
By this estimate together with \eqref{cs6} and \eqref{tim-coclu-lem}, we have
\begin{equation}
\sup_{t \in [0,T]} \mathcal{E}^ \epsilon (t)
 \le
M_0 +     T \mathcal{P} (\sup_{t \in [0,T]}  \mathcal{E}^ \epsilon (t)) + C\delta \sup_{t \in [0,T]} \mathcal{E}^ \epsilon (t)\,,
\end{equation}
By choosing $ \delta >0$ sufficiently small, we obtain that
\begin{equation}
\label{cs12}
\sup_{t \in [0,T]} \mathcal{E}^ \epsilon (t)
 \le
\mathcal{M}_0 +     T \mathcal{P} (\sup_{t \in [0,T]}  \mathcal{E}^ \epsilon (t))\,,
\end{equation}
where $ \mathcal{P} $ is a polynomial function which is independent of $ \epsilon $.
Then we need to prove $\mathcal{E}^ \epsilon (t)$ is a continuous function with respect to $t$. Recall that $v\in L^2(0,T; H^3(\Omega^ \epsilon ) )$ and $v_t\in L^2(0,T; H^1(\Omega^ \epsilon ) )$. By the definition of $\zeta_l$, we know $\zeta_l v\in L^2(0,T; H^3(\mathcal{B}_l ) )$ and $\zeta_l v_t\in L^2(0,T; H^1(\mathcal{B}_l ) )$ for $l=1,...,L$. Hence by summing over $l=1,...,L$, $ v\in C^0([0,T]; H^2( \Omega^\epsilon  ) )$.
With (\ref{mhd-lagran-c}), we have
\begin{align*}
\displaystyle   \int _{0}^{T}\|\partial _{t} \mathbf{H} \|^{2,}_{2,\Omega^\epsilon}\mathrm{d}t & = \int _{0}^{T}\|H ^{i}A _{i}^{k}\mathbf{v}, _{k}\| _{2,\Omega^\epsilon}^{2}\mathrm{d}t
 \nonumber \\
 & \displaystyle \leq C  \int _{0}^{T}\|\mathbf{H}\|_{2,\Omega^\epsilon}^{2}\|A\|_{2,\Omega^\epsilon}^{2}\|\mathbf{v}\|_{3,\Omega^\epsilon}^{2}\mathrm{d}t .
\end{align*}
This implies that $\partial _{t} \mathbf{H}  \in L^2(0,T; H^2(\Omega^\epsilon  ) )$ provided $ \mathbf{H} \in L ^{\infty}(0,T;H ^{2}(\Omega ^{\epsilon})) $, $ \boldsymbol{\eta} \in L ^{\infty}(0,T;H ^{3}(\Omega ^{\epsilon})) $ and $ \mathbf{v} \in L ^{2}(0,T;H ^{3}(\Omega ^{\epsilon})) $. Recall also that $ \mathbf{H} \in L ^{\infty}(0,T;H ^{2}(\Omega ^{\epsilon})) $. Then we have
\begin{align}
\displaystyle \mathbf{H} \in C([0,T];H ^{2}(\Omega ^{\epsilon})).
\end{align}
Since the pressure satisfies the elliptic system:
\begin{alignat*}{2}
-\Delta _{\boldsymbol{\eta}} Q & = v^i,_rA^r_j v^j,_sA^s_i+\frac{1}{4 \pi}H ^{i}, _{r}A _{j}^{r}H  ^{j}, _{s}A ^{s}_{i}\ \ && \text{ in } \Omega^ \epsilon  \times (0,T] \,,  \\
Q &=  \mathbf{n}\cdot \left[ \text{Def}_{\boldsymbol{\eta}} \mathbf{v}\cdot \mathbf{n}\right] \ \ && \text{ on } \Gamma^\epsilon  \times [0,T] \,,
\end{alignat*}
as in \cite{Shkoller-2019-APAN}, we then infer that $Q\in C([0,T]; H^1(\Omega^ \epsilon ) )$.  Then, using the momentum equation
 \eqref{stokes_for_v.a} and the magnetic field equation \eqref{mhd-lagran-c}, it follows that $\mathbf{v}_t\in C([0,T]; L^2(\Omega^ \epsilon ) )$ and $ \mathbf{H}_{t} \in C([0,T];L ^{2}(\Omega ^{\epsilon})) $. Thus we show that $\mathcal{E}^ \epsilon (t)$ is a continuous function with respect to $t$. Therefore we conclude that for sufficiently small $ T >0\, (\mbox{independent of }\epsilon)$, 
 \begin{align}\label{E-bd-conclu}
  \displaystyle  \sup_{t \in [0,T]} \mathcal{E}^ \epsilon (t) \le 2 \mathcal{M}_0.
  \end{align}
From \eqref{eta-eq}, we have  
  \begin{align*}
  \displaystyle  \|\nabla \boldsymbol{\eta}(\cdot,t)- \mathbb{I}_{3}\|_{2,\Omega ^{\epsilon}} \leq C\Big\|\int _{0}^{t}  \nabla \mathbf{v} \mathrm{d}s\Big\|_{2,\Omega ^{\epsilon}} \leq C \sqrt{t}\sup _{t \in [0,T]}\sqrt{\mathcal{E} ^{\epsilon}(t)}.
  \end{align*}
  This alongside Lemma \ref{lem-Sobolev} and \eqref{E-bd-conclu} implies that 
  \begin{gather}
  \displaystyle   \|\nabla \boldsymbol{\eta}(\cdot,t)- \mathbb{I}_{3}\|_{L ^{\infty}(\Omega ^{\epsilon})} \leq 2C \mathcal{M}_{0} \sqrt{t}.
  \end{gather}
  Therefore we get for for sufficiently small $ T >0\, (\mbox{independent of }\epsilon)$ that 
\begin{align*}
\displaystyle  \sup_{t \in [0,T]} \| \nabla \boldsymbol{\eta} (\cdot , t)- \mathbb{I}_{3} \|_{ L^\infty(\Omega^ \epsilon )} \le  \frac{1}{2}\vartheta^{10}.
\end{align*}
  The proof is complete.
\end{proof}

\section{Proof of the Main Theorem}\label{section8}
With the estimates established in the previous section, in this section we shall prove Theorem \ref{mainThm}. To do so,  a more quantitative estimate is needed  in order to claim the continuity of $\bar \p^2 v(t , \cdot )$ in  $ L^2( \Omega^\epsilon ) $. The proof is motivated by that for Proposition 7 in \cite{Shkoller-2019-APAN}. The new ingredient here is to deal with the coupling of the magnetic field with the velocity field and the evolution of the geometry and regularity of the free boundary. 

\begin{lemma}
For all $t\in [0,T]$,
\begin{equation}\label{est-main2}
\max_{s \in [0,t] } \| \bar \p^2 ( \mathbf{v} ( \cdot ,s )  - \mathbf{u}_0^ \epsilon )\|_{0, \Omega ^ \epsilon }^2
+ \int_0^t \| \bar \p^2 ( \mathbf{v}(\cdot ,s)  - \mathbf{u}_0^ \epsilon )\|_{1, \Omega ^ \epsilon }^2 \mathrm{d} s \lesssim t^{1/2} \mathcal{P} (\mathcal{M}_0).
\end{equation}
\end{lemma}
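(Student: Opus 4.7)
The plan is to mirror the boundary regularity argument of Lemma~\ref{lem-bdy-regul}, but with the test function replaced by $\zeta_l^2 \bar{\partial}^2 \mathbf{V}_l$, where $\mathbf{V}_l := \mathbf{v}_l - \mathbf{u}_0^\epsilon \circ \theta_l^\epsilon$. Since $\mathbf{u}_0^\epsilon\circ\theta_l^\epsilon$ is independent of time, we have $\partial_t\mathbf{V}_l=\partial_t\mathbf{v}_l$, so the time-derivative piece of the tested momentum equation becomes a perfect derivative
\begin{equation*}
\int_{B^+}\zeta^2\bar{\partial}^2\partial_t\mathbf{v}_l\cdot\bar{\partial}^2\mathbf{V}_l\,\mathrm{d}\mathbf{y}=\tfrac{1}{2}\tfrac{\mathrm{d}}{\mathrm{d}t}\|\zeta\bar{\partial}^2\mathbf{V}_l\|_{0,B^+}^2 ,
\end{equation*}
and the vanishing initial data $\mathbf{V}_l(0)=0$ kills any boundary-in-time term upon integration. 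The viscous contribution, after integration by parts and the decomposition $\bar{\partial}^2\mathbf{v}_l=\bar{\partial}^2\mathbf{V}_l+\bar{\partial}^2(\mathbf{u}_0^\epsilon\circ\theta_l^\epsilon)$, produces the coercive quantity $\|\zeta\nabla\bar{\partial}^2\mathbf{V}_l\|_{0,B^+}^2$ plus a cross-term involving $\nabla\bar{\partial}^2\mathbf{u}_0^\epsilon$ that is absorbed using the uniform bound $\|\mathbf{u}_0^\epsilon\|_{3,\Omega^\epsilon}\leq C m_0$ from Section~\ref{sec-initial_data}.

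The remaining forcing terms (pressure, $AA^T-\mathbb{I}_3$, cut-off commutators, magnetic tension) are handled exactly as $\mathcal{R}_1,\ldots,\mathcal{R}_5$ in the proof of Lemma~\ref{lem-bdy-regul}, with all products of $\boldsymbol{\eta}$, $A$, $\mathbf{v}$, $\mathbf{H}$, $Q$ estimated via \eqref{bd-te-l}, the Sobolev inequalities of Section~\ref{sec-initial_data}, and \eqref{eta-tim-esti}. The crucial simplification compared with Lemma~\ref{lem-bdy-regul} is that Proposition~\ref{prop-key} is now already available, so $\sup_{s\in[0,T]}\mathcal{E}^\epsilon(s)\leq C\mathcal{M}_0$ can be used at every step, turning each right-hand side into a polynomial $\mathcal{P}(\mathcal{M}_0)$. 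Integrating in time, applying Cauchy--Schwarz, and using
\begin{equation*}
\int_0^t \mathcal{P}(\mathcal{M}_0)\,\|\bar{\partial}^2\mathbf{V}_l\|_{0,B^+}\,\mathrm{d}s\leq t^{1/2}\mathcal{P}(\mathcal{M}_0)\Big(\int_0^t\|\bar{\partial}^2\mathbf{V}_l\|_{0,B^+}^2\,\mathrm{d}s\Big)^{1/2}
\end{equation*}
delivers the $t^{1/2}$ factor; any residual $\int_0^t\mathcal{P}(\mathcal{M}_0)\,\mathrm{d}s=t\mathcal{P}(\mathcal{M}_0)$ terms are then absorbed since $t\leq t^{1/2}$ for $t$ small. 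Summation over the boundary charts $l=1,\dots,K$ yields the stated estimate on $\Omega^\epsilon$.

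The step I expect to be the main obstacle is the magnetic-tension contribution $\mathbf{H}_l\cdot\nabla_{\boldsymbol{\eta}_l}\mathbf{H}_l$: unlike in Lemma~\ref{lem-bdy-regul}, the cancelation between the tension term in the momentum equation and the $\mathbf{H}$-evolution equation is lost because the test function is $\bar{\partial}^2\mathbf{V}_l$ instead of $\bar{\partial}^2\mathbf{v}_l$. However, since we only need a (non-sharp) rate $t^{1/2}\mathcal{P}(\mathcal{M}_0)$, this term can be controlled by a direct Cauchy--Schwarz estimate using $\mathbf{H}\in L^\infty(0,T;H^2(\Omega^\epsilon))$ from Lemma~\ref{lem-H}, together with the trace inequality of Lemma~\ref{lem-trace}, rather than by any cancelation identity. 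A parallel delicate check is ensuring that commutators of $\bar{\partial}$ with the boundary chart $\theta_l^\epsilon$ introduce no $\epsilon$-dependent constants, which is guaranteed by \eqref{bd-te-l}.
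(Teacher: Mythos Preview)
Your approach is essentially the same as the paper's: localize the equation for $\mathbf{v}-\mathbf{u}_0^\epsilon$ to boundary charts, test against $\zeta^2\bar\partial^2(\mathbf{v}-\mathbf{u}_0^\epsilon)$, and use Proposition~\ref{prop-key} (i.e.\ $\sup_{[0,T]}\mathcal{E}^\epsilon\le C\mathcal{M}_0$) to replace all energy factors by $\mathcal{P}(\mathcal{M}_0)$, producing the $t^{1/2}$ rate via Cauchy--Schwarz in time. The paper treats $\mathcal{T}_1$--$\mathcal{T}_4$ by quoting \cite{Shkoller-2019-APAN} and handles the new magnetic term $\mathcal{T}_5$ directly, exactly as you anticipate.

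There is one small but genuine gap in your treatment of the magnetic tension. A \emph{direct} Cauchy--Schwarz on $\int_{B^+}\zeta^2\bar\partial^2(H^iA_i^\lambda H^j,_\lambda)\,\bar\partial^2 V^j\,\mathrm{d}\mathbf{y}$ would require control of $\bar\partial^2\nabla\mathbf{H}$, i.e.\ $\mathbf{H}\in H^3(\Omega^\epsilon)$, which is not available from Lemma~\ref{lem-H}. The paper resolves this by first integrating by parts in the $\lambda$ variable (using the Piola identity and $\mathbf{H}\cdot\mathbf{n}=0$ to kill the boundary term) so that at most $\bar\partial^2\mathbf{H}$ appears, paired against $\nabla\bar\partial^2\mathbf{v}$; the latter is then absorbed via $\mathbf{v}\in L^2(0,T;H^3(\Omega^\epsilon))$, yielding the bound $Ct^{1/2}\mathcal{P}(\mathcal{M}_0)$. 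Once you insert this one integration by parts, your outline goes through; the trace inequality of Lemma~\ref{lem-trace} is not actually needed here.
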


\begin{proof}
We write $\mathbf{v}(t) = \mathbf{v}( \cdot ,t)$ and set the constants in the system to be $ 1 $ for simplicity. The difference $\mathbf{v}(t) - \mathbf{u}_{0}^{\epsilon}$ satisfies the equation
\begin{equation}\nonumber
(\mathbf{v}- \mathbf{u}_{0}^{\epsilon})_t - \Delta _ {\boldsymbol{\eta}}(  \mathbf{v}- \mathbf{u}_{0}^{\epsilon}) + A^T \nabla Q = \Delta _{\boldsymbol{\eta}} \mathbf{u} _{0}^{\epsilon} + \mathbf{H} \cdot \nabla_{\boldsymbol{\eta}} \mathbf{H}.
\end{equation}
We localize the above equation to the boundary charts defined before and take the second-order tangential derivative as in lemma \ref{lem-bdy-regul}, then we get by integrating by parts that 
\begin{align}
&0=  {\frac{1}{2}} \frac{\mathrm{d}}{\mathrm{d}t} \| \zeta \bar \partial^2 [\mathbf{v}(t)- \mathbf{u}_{0}^{\epsilon}]\|^2_{0,B^+}
+  \int_{B^+}  \zeta ^2 \bar \p^2 [ A^k_{\lambda} A^j_{\lambda} (\mathbf{v}- \mathbf{u}_{0}^{\epsilon}),_j] \cdot  \bar\p^2[ ( \mathbf{v}- \mathbf{u} _{0}^{\epsilon})],_k \mathrm{d} \mathbf{y}  \nonumber \\
& ~\quad+  \int_{B^+}  \zeta ^2\bar \p^2 [A^k_i  Q]\, \bar\p^2  v^i,_k \mathrm{d}\mathbf{y}  +  \int_{B^+}  \zeta ^2\bar \p^2 [ A^k_{\lambda} A^j_{\lambda} {\mathbf{u}_{0}^{\epsilon}},_j] \cdot  \bar\p^2[  (\mathbf{u}- \mathbf{u}_{0}^{\epsilon})],_k \mathrm{d}\mathbf{y} 
 \nonumber \\ 
 & ~\displaystyle \quad + \int_{B^+} \zeta^2  \bar\p^2 (H^i A_{i}^{\lambda}  H,_{\lambda}^{j}) \bar\p^2 v^{j} \mathrm{d} \mathbf{y}.
 \end{align}
 Integrating the above equation over the time interval $[0,T]$, we get 
\begin{align}\label{conti-iden}
  & \| \zeta \bar \partial^2 [\mathbf{v}(t)- \mathbf{u}_{0}^{\epsilon}]\|^2_{0,B^+} + \int_0^t   \| \zeta \bar\p^2   [\mathbf{v}(s)- \mathbf{u}_{0}^{\epsilon} ]  \|^2_{1,B^+}\mathrm{d}s \leq \underbrace{\Big| \int_0^t \int_{B^+} \bar \p^2 [A^k_i  Q]\, \bar \partial^2[ \zeta ^2 ( v- u_{0}^{\epsilon})^i],_k \mathrm{d} \mathbf{y} \mathrm{d} s\Big |}_{ \mathcal{T} _{1}} 
   \nonumber \\ 
   &~\displaystyle +  \underbrace{\Big|\int_0^t  \int_{B^+}   \bar \partial^2 [ (A^k_{\lambda} A^j_{\lambda} - \delta^{kj}) (\mathbf{v}- \mathbf{u}_{0}^{\epsilon}),_j] \cdot  \bar \partial^2[ \zeta ^2 ( \mathbf{v}- \mathbf{u}_{0}^{\epsilon})],_k  \mathrm{d} \mathbf{y} \mathrm{d} s\Big|}_{ \mathcal{T} _{2}}  \\
  &~ +  \underbrace{\Big|\int_0^t  \int_{B^+}   \bar \p^2  ( v- u _{0}^{\epsilon})^i,_k\, [\ [ \bar \partial^2 \zeta ^2 ( v- u _{0}^{\epsilon})^i  + 2 \bar \p \zeta^2 \bar \p (v-\uu)^i],_k\textcolor{black} {+\zeta^2,_k \bar\p^2 v^i]}  \mathrm{d}\mathbf{y} \mathrm{d} s \Big|}_{ \mathcal{T} _{3}}   
   \nonumber \\ 
   & \displaystyle ~ +   \underbrace{\Big|\int_0^t  \int_{B^+}  \bar \partial^2 [ (A^k_{\lambda} A^j_{\lambda}  {\mathbf{u}_{0}^{\epsilon}},_j] \cdot  \bar \partial^2[ \zeta ^2 (\mathbf{v}- \mathbf{u}_{0}^{\epsilon})],_k  \mathrm{d} \mathbf{y} \mathrm{d} s \Big|}_{ \mathcal{T} _{4}}+ \underbrace{\Big|\int_0^t  \int_{B^+} \zeta^2  \bar \partial^2 (H^i A_{i}^{\lambda}  H^{j},_{\lambda}) \bar \partial^2 v^{j}   \mathrm{d} \mathbf{y} \mathrm{d} s  \Big|}_{ \mathcal{T} _{5}},
\end{align}
The estimates of $\mathcal{T} _{1} \thicksim \mathcal{T} _{4}$ can be derived by similar arguments as in the proof of Proposition 7.1 of \cite{Shkoller-2019-APAN}. So we quote the conclusion in the following without proof.
\begin{align}\label{T-1-4-ESTI}
\displaystyle \sum _{i=1}^{4}\mathcal{T}_{i} \leq \sqrt{t}\mathcal{P}(\mathcal{M}_{0}). 
\end{align}
Next we shall estimate $ \mathcal{T}_{5} $. It follows from integration by parts that 
\begin{align}
\displaystyle \mathcal{T}_{5} \leq  \Big\vert \int _{0}^{t}\int _{B ^{+}}\zeta ^{2}\bar{\partial}^{2}(H ^{i}A _{i}^{\lambda})H ^{j}, _{\lambda}\bar{\partial}^{2}v ^{j}\mathrm{d}\mathbf{y}\mathrm{d}s \Big\vert+ \Big\vert \int _{0}^{t}\int _{B ^{+}}(\zeta ^{2}H ^{i}\bar{\partial}^{2}v ^{j}), _{\lambda}A _{i}^{\lambda}\bar{\partial}^{2}H ^{j}\mathrm{d}\mathbf{y}\mathrm{d}s \Big\vert,
\end{align}
which along with \eqref{bd-te-l}, \eqref{na-eta-bd}, \eqref{E-bd-conclu}, the H\"older inequality and the Sobolev embedding theorem implies that 
\begin{align}\label{T-5-esti}
\displaystyle  \mathcal{T}_{5} & \leq C\int _{0}^{t}\int _{\Omega ^{\epsilon}}\left( \sum _{k=1}^{2}(\vert \nabla^{k} \mathbf{H}\vert+ \vert \nabla \mathbf{H}\vert \vert \nabla ^{k} \boldsymbol{\eta}\vert)   +\sum _{k=2}^{3}\vert \mathbf{H}\vert \vert \nabla ^{k}\boldsymbol{\eta}\vert++\vert \mathbf{H}\vert \vert \nabla ^{2}\boldsymbol{\eta}\vert ^{2}\right)  (\sum _{k=1}^{2}\vert \nabla \mathbf{H}\vert \vert \nabla ^{k}\mathbf{v}\vert) \mathrm{d}\mathbf{x}\mathrm{d}s 
 \nonumber \\ 
 & \displaystyle \quad+ C \int _{0}^{t}\int _{\Omega ^{\epsilon}}\left(  \sum _{k=1}^{3}\vert \mathbf{H}\vert \vert \nabla ^{k}\mathbf{v}\vert+\sum _{k=1}^{2}\vert \nabla \mathbf{H} \vert\vert \nabla ^{k}\mathbf{v}\vert\right) (\sum _{k=1}^{2}\vert \nabla ^{k}\mathbf{H}\vert )\mathrm{d}\mathbf{x}\mathrm{d}s 
  \nonumber \\ 
  & \displaystyle \leq C \int _{0}^{t} \left( \|\mathbf{H}\|_{2, \Omega ^{\epsilon}} ^{2}\|\mathbf{v}\|_{2, \Omega ^{\epsilon}}+ \| \mathbf{H}\|_{2, \Omega ^{\epsilon}} ^{2} \|\boldsymbol{\eta} \|_{3, \Omega ^{\epsilon}} \|\mathbf{v} \|_{3, \Omega ^{\epsilon}} +  \|\mathbf{H}\| _{2, \Omega ^{\epsilon}}^{2}\|\mathbf{v}\|_{3, \Omega ^{\epsilon}} \right) \mathrm{d}s 
   \nonumber \\ 
   & \displaystyle \leq C t ^{1/2}\mathcal{P}(\mathcal{M}_{0}).
\end{align}
Collecting the estimates in \eqref{T-1-4-ESTI} and \eqref{T-5-esti}, we get from \eqref{conti-iden} that 
\begin{align}
\displaystyle \| \zeta \bar \partial^2 [\mathbf{v}(t)- \mathbf{u}_{0}^{\epsilon}]\|^2_{0,B^+} + \int_0^t   \| \zeta \bar\p^2   [\mathbf{v}(s)- \mathbf{u}_{0}^{\epsilon} ]  \|^2_{1,B^+}\mathrm{d}s  \leq C t ^{1/2}\mathcal{P}(\mathcal{M}_{0}) 
\end{align}
for any $ t \in [0,T] $, where $ C>0 $ is a constant independent of $ \epsilon $. The proof is complete.
\end{proof}

With (\ref{est-main2}) and the estimates obtained in Sec. \ref{sec-a_priori-esti}, we are now ready to prove the main theorem of the paper. We remark that analysis is similar to the one in Section $ 8 $ of \cite{Shkoller-2019-APAN}, and for the sake of completeness, we will briefly carry out the proof in the following. To begin with, since $\operatorname{div} \mathbf{u}_0^\epsilon =0$ and $ \mathop{\mathrm{div}}\nolimits \mathbf{v}=- (A _{i}^{j}- \delta ^{ij})v ^{i}, _{j} $, it holds that 
\begin{align}\label{est-v-u0}
 & \| \bar \p \operatorname{div}  (\mathbf{v} - \mathbf{u}_0^\epsilon) \|^2_{0, \Omega^\epsilon }=\| \bar \p \operatorname{div}  \mathbf{v}  \|^2_{0, \Omega^\epsilon }=\| \bar \p ((A _{i}^{j}- \delta ^{ij})v ^{i}, _{j})  \|^2_{0, \Omega^\epsilon }\\
  &~ \leq \| \bar \p (A- \mathbb{I}_3 ) \, \nabla \mathbf{v} \|^2_{0, \Omega^\epsilon }+  \| (A- \mathbb{I}_3 ) \, \bar \p \nabla \mathbf{v} \|^2_{0, \Omega^\epsilon } \\
 & ~ \leq \sqrt{T} \mathcal{P} (\mathcal{M}_0) .
\end{align}
By \eqref{est-main2}, \eqref{est-v-u0} and the normal trace theorem, we see that $\bar \p^2 (\mathbf{v} - \mathbf{u}_{0}^{\epsilon}) \cdot \mathbf{N}^ \epsilon   \in C([0,T; H^{ - {\frac{1}{2}} }(\Gamma^ \epsilon ))$ and $ \| \bar \p^2 (\mathbf{v} - \mathbf{u}_{0}^{\epsilon}) \cdot \mathbf{N}^\epsilon\|^2_{ -1/2, \Gamma^ \epsilon } \le \sqrt{T} \mathcal{P} (\mathcal{M}_0)$, then we have
\begin{equation}\nonumber
 \| (\mathbf{v} - \mathbf{u}_{0}^{\epsilon}) \cdot \mathbf{N}^\epsilon\|^2_{ 1.5, \Gamma^ \epsilon } \le \sqrt{T} \mathcal{P} (\mathcal{M}_0) \,,
\end{equation}
and hence by Lemma \ref{lem-Sobolev}, we have 
\begin{equation}\label{cs200}
\max_{x \in \Gamma^ \epsilon } \vert (\mathbf{v} - \mathbf{u}_{0}^{\epsilon}) \cdot \mathbf{N}^\epsilon\vert \le T ^{\frac{1}{4}}  \mathcal{P} (\mathcal{M}_0) 
\end{equation}
for any $ t \in [0,T] $. According to the construction of $\mathbf{u}_0^\epsilon$ in Sec. \ref{sec-initial_data}, we have $\mathbf{u}_0^\epsilon ( \mathbf{X}_+^ \epsilon ) \cdot \mathbf{N}^\epsilon = -1$ and $ \mathbf{u}_0^\epsilon ( \mathbf{X}_-)\cdot \mathbf{N}^ \epsilon =0 $. Let $T>0$ be as in Lemma \ref{lem-verify}. And let $9\epsilon < T$ and $\mathbf{Y}$ be a point on $\partial \omega_- \cap \{x_3=0\}$. Then we will do some analysis about the locations of $\mathbf{X}_+^\epsilon$ and $\mathbf{Y}$ at $t=9\epsilon$.
For $ \mathbf{X}_+^ \epsilon$, by (\ref{cs200}) we have
\begin{align*}\label{X11}
\eta^3(\mathbf{X}_+^ \epsilon , 9 \epsilon ) =\boldsymbol{\eta}(\mathbf{X}_+^ \epsilon ,9\epsilon)  \cdot \mathbf{e}_3 =\epsilon + \int_0^{9\epsilon} v^3(\mathbf{X}_+^ \epsilon , s)\mathrm{d} s < -7\epsilon,
\end{align*}
\begin{equation}\label{X12}
|\eta^ \alpha (\mathbf{X}_+^ \epsilon , t )| \le10 \epsilon \mathcal{P}(\mathcal{M}_0) \ \ \alpha =1,2, \\ \forall t \in[0,9\epsilon],
\end{equation}
where $ \mathbf{e}_{3}=(0,0,1) \in \mathbb{R}^{3} $. For $\mathbf{Y}$, we have

\begin{align*}\label{Y11}
\eta^3( \mathbf{Y}, 9 \epsilon ) = \int_0^{9\epsilon} v^3(\mathbf{Y},s)\mathrm{d} s \geq 9^{\frac{5}{4}}\mathcal{P}(\mathcal{M}_0) \epsilon^{\frac{5}{4}},
\end{align*}
\begin{equation}\label{Y12}
| \eta^ \alpha (\mathbf{Y} , t ) -Y^ \alpha | \le 10 \epsilon \mathcal{P}(\mathcal{M}_0) \ \ \alpha =1,2,  \\ \forall t \in[0,9\epsilon].
\end{equation}
From the above analysis, we know that at $t=0$, $\mathbf{X}_+^\epsilon$ is exactly above $\mathbf{Y}$. However, at $t=9\epsilon$, $\boldsymbol{\eta}(\mathbf{X}_+^\epsilon,9\epsilon)$ is vertically below $\boldsymbol{\eta} (\partial \omega_{-} \cup \{ x_3=0 \} ,9\epsilon)$. So there must exist a time $0<T_1<9\epsilon$ and $\mathbf{Y}\in \partial \omega_{-} \cup \{ x_3=0 \}$ such that $\boldsymbol{\eta}(\mathbf{X}_+^\epsilon,T_1)=\boldsymbol{\eta}(\mathbf{Y},T_1 )$. Then the proof of the main theorem is completed.


\section*{Acknowledgments}
 The work of Guangyi Hong was partially supported by the National Natural Science Foundation $\#$ 12201221, and the Guangdong Basic and Applied Basic Research Foundation $\#$ 2021A1515111038. Luo's research is supported by a grant from the Research Grants Council of the Hong Kong Special Administrative Region, China (Project No. 11307420).


\end{document}